\numberwithin{equation}{section}
\def \esp {[0,T]\times \R^k}
\def \tp{^{\top}}
\def \R {{\bf R}}
 \def \triangleq {=}
\def\G{\Gamma}
\def \mps {\mapsto}
\def \ind{1\!\!1}
\def \ed {\end{document}}
\def \tx {(t,x)\in \esp}
\def \ij {(i,j)\in A^1\times A^2}
\newcommand{\ba}{\begin{array}}
\newcommand{\ea}{\end{array}}
\def \eps {\epsilon}
\def \d {\delta}
\def \ms {\medskip}
\def \bs {\bigskip}
\def \cF {{\cal F}}
\def\no{\noindent}
\def \qq {\qquad}
\def \bal{\begin{array}{l}}
\def \ea{\end{array}}
\def \q {\quad}
\def \beq {\begin{eqnarray}}
\def \eeq {\end{eqnarray}}
\def \xt {X^{t,x}}
\def \cP {\mathbf{P}}
\def \E {\mathbf{E}}
\def \cS {{\cal S}}
\def \cL {{\cal L}}
\def \cH {{\cal H}}
\def \cA {{\cal A}}
\def \ms {\medskip}
\def \bs {\bigskip}
\def \cF {{\cal F}}
\def \qq {\qquad}
\def \bal{\begin{array}{l}}
\def \ea{\end{array}}
\def \q {\quad}
\def \xt {X^{t,x}}
\def \E {\mathbf{E}}
\def \cF {\mathcal{F}}
\def \cP {\mathcal{P}}
\def \cS {{\cal S}}
\def \cL {{\cal L}}
\def \cH {{\cal H}}
\def \cA {{\cal A}}
\def\b{\beta}
\def\g{\gamma}
\def\d{\delta}
\def\l{\lambda}
\def\mb{\mbox}
\def\cA{{\cal A}}
\def\cC{{\cal C}}
\def\cF{{\cal F}}
\def\cH{{\cal H}}
\def\cL{{\cal L}}
\def\cP{{\cal P}}
\def\cS{{\cal S}}
\def \ind{1\!\!1}
\def \ed {\end{document}}
\def \tx {(t,x)\in \esp}
\def \kl {(k,l)\in A^1\times A^2}
\def \E{\mathbb{E}}
\newcommand{\be}{\begin{equation}}
\newcommand{\ee}{\end{equation}}
\newcommand{\bea}{\begin{eqnarray}}
\newcommand{\eea}{\end{eqnarray}}
\def \rw {\rightarrow}
\newtheorem{thm}{Theorem}[section]
\newtheorem{rem}{Remark}[section]
\newtheorem{propo}{Proposition}[section]
\newtheorem{defi}{Definition}[section]
\newtheorem{lem}{Lemma}[section]
\newtheorem{cor}{Corollary}[section]
\def \id1x {I_{ij}^{1,\d}(t,x,\phi)}
\def \id2x {I_{ij}^{2,\d}(t,x,\phi)}
\def \id1fx {\int_{|e|\leq
\delta}(\phi(t,x+\beta(x,e))-\phi(t,x))\gamma^{ij}(x,e)n(de)}
\def \id2fx{
\int_{|e|\geq
\delta}(\phi(t,x+\beta(x,e))-\phi(t,x))\gamma^{ij}(x,e)n(de)}
\def \ijg {(i,j)\in {\Gamma}}
\def \pin {\mbox{$\int_E$}}
\def \ijg {(i,j)\in \Gamma}
\def \klg {(k,l)\in \Gamma}
\begin{document}
\bibliographystyle{unsrt}
\title{Viscosity solutions of systems of variational inequalities with
interconnected bilateral obstacles of non-local type.}
\author{Said Hamad\`ene\footnote{
Universit\'e du Maine, LMM, Avenue Olivier Messiaen, 72085 Le Mans,
Cedex 9, France. Email: hamadene@univ-lemans.fr } \,\, and Xuzhe
Zhao\footnote{ Applied Mathematics Department School of Finance University of Foreign Studies, Guangzhou 510420, P.R.China. E-mail: sosmall129@hotmail.com} }
 \date{\today}
\maketitle
\begin{abstract}
In this paper, we study systems of nonlinear second-order
variational inequalities with interconnected bilateral obstacles
with non-local terms. They are of min-max and max-min types and
related to a multiple modes zero-sum switching  game in the
jump-diffusion model. Using systems of penalized reflected backward
SDEs with jumps and unilateral interconnected obstacles, and their associated deterministic functions, we
construct for each system a continuous viscosity solution which is
unique in the class of functions with polynomial growth.
\medskip

\noindent \textbf{Keywords:} Switching zero-sum games ; non-local variational
inequalities ; backward stochastic differential
equation ; Hamilton-Jacobi-Bellman-Isaacs equation ; Perron's method
; viscosity solution.
\medskip

\noindent {\bf AMS subject classification}: 49N70, 49L25, 90C39,
93E20.
\end{abstract}
\setcounter{table}{1}
\section{\Large\bf Introduction}
During the last decade optimal stochastic switching problems have
attracted a lot of research activity (see e.g.
\cite{eliekharroubichassegneux,dhp,  eliekharroubi, hasri,ibtissam,hj,
hamadenemorlais, hz, hutang, pham, yongtang} and the references therein) in
connection with their various applications especially in the economic
and finance spheres, such as energy, etc. Comparatively, switching
games, of zero-sum or nonzero-sum types, have been less considered
even though there are some works in this field including
\cite{dhmz, hu,koike, lyama, tang}. In these latter articles, the
Hamilton-Jacobi-Bellmans-Isaacs PDE, which is of min-max or max-min
type, associated with the zero-sum switching stochastic game is
studied from the point of view of viscosity solution theory. The
probabilistic version of those works is considered in
\cite{dhmz,hu} where it is shown that the BSDE system associated
with the zero-sum game has a solution. In \cite{dhmz}, uniqueness of the solution, 
which is an involved question, is proved as well. The issue of existence of a value or a saddle-point for
the game is also addressed in \cite{dhmz}, where it is shown that the game
has a saddle-point when the switching costs and utilities are decoupled. This
existence is deeply related to the optimal policy of a standard optimal switching problem. The general case still
open. \ms

Except articles \cite{zhao, ibtissam}, all the previous works deal with the
case of Brownian noise. In \cite{zhao}, the framework where the
noise is driven by a L\'evy process is studied in detail. The main
motivation is that models which include Poisson jumps have the
feature to be more realistic since they capture non-predictable
events, e.g. in the energy market, jumps of the prices due to sudden
weather changes, etc. Therefore the main objective of this work is the extension to
the model with jumps of the paper  \cite{dhmz}, where the authors have studied systems of
variational inequalities with interconnected lower and upper
obstacles, which arise as the Hamilton-Jacobi-Bellman-Isaacs equation
in a multiple modes switching game between two players in the
framework without jumps. Precisely we consider the
following system of non-local variational inequalities or integral-partial differential equations (IPDEs for short):  For every pair $(i,j)$ in the finite set of modes
$A^1\times A^2$, \ms
 \be
\label{1SIPDE}\left\{\begin{array}{ll}
\min\{(v^{ij}-L^{ij}[(v^{kl})_{(k,l)\in A^1\times A^2}])(t,x),
\max\{(v^{ij}-U^{ij}[(v^{kl})_{(k,l)\in A^1\times A^2}])
(t,x),\\-\partial_t v^{ij}(t,x)-\cL
v^{ij}(t,x)-g^{ij}(t,x,(v^{kl}(t,x))_{(k,l)\in A^1\times
A^2},\sigma(t,x)^\top D_x
v^{ij}(t,x),I_{ij}(t,x,v^{ij}))\}\}=0;\\
v^{ij}(T,x)=h^{ij}(x)\end{array}\right.\ee where, for any $(t,x)\in
[0,T]\times \R^k$,\be \label{1SIPDEnot}\begin{array} {l} \mbox{a) }
\cL\phi(t,x):=b(t,x)^\top D_x\phi(t,x)+\frac{1}{2}Tr[\sigma
\sigma^\top(t,x)D_{xx}^2\phi(t,x)]+\\\qq\qq\qq\qq\qq\qq\qq\pin(\phi(t,x+\beta(x,e))-
\phi(t,x)-D_x\phi(t,x)\beta(x,e))n(de);
\\\mbox{b) }I_{ij}(t,x,\phi)=\pin(\phi(t,x+\beta(x,e))-\phi(t,x))\gamma^{ij}(x,e)n(de);\\
\mbox{c) } L^{ij}[(v^{kl})_{(k,l)\in A^1\times
A^2}])(t,x):=\max\limits_{k\neq i}(v^{kj}-\underline{g}_{ik})(t,x)) \mbox{ and }
\\\qq\qq\qq\qq\qq\qq ~U^{ij}[(v^{kl})_{(k,l)\in A^1\times
A^2}])(t,x):=\min\limits_{l\neq j}(v^{il}+\overline{g}_{jl})(t,x)).\end{array}\ee The function
$\underline{g}_{ik}$ (resp. $\overline{g}_{jl}$) stands for the
switching cost of the maximizer (resp. minimizer) when she makes the
decision to switch from mode $i$ to mode $k$ (resp. mode $j$ to mode
$l$) while the function $g^{ij}$ (resp. $h^{ij}$) is the
instantaneous (resp. terminal) payoff when the maximizer (resp.
minimizer) chooses mode $i$ (resp. $j$). The non-local terms which
appear in (\ref{1SIPDE}) and given in a), b) above stem from the
jumps of the dynamics of the system which is of jump-diffusion type
(see (\ref{cmSDE}) below). Finally note that the obstacles in (\ref{1SIPDE}) depend on the solution. 

In this paper we show that system (\ref{1SIPDE}) has a continuous
solution in viscosity sense which is moreover unique in the class of
functions which have polynomial growth. As a by-product we obtain
the same conclusion for the max-min system (\ref{maxSIPDE}). 
Our work should be seen as a starting point for future research in this field (e.g. improvement of the results, numerics, etc.).

This paper is organized as follows: In Section 2, we fix the
notations, assumptions, definitions and set up accurately the
problem. In Section 3, we prove a comparison result between the
subsolutions and supersolutions of system (\ref{1SIPDE}) when they
have polynomial growth. As an immediate consequence, the solutions of
(\ref{1SIPDE}) with polynomial growth is necessarily continuous and unique. In
Section 4, we introduce systems of integral-partial differential equations 
with lower (resp. upper)
interconnected obstacles which are obtained by the penalization of
the upper (resp. lower) obstacles of system (\ref{1SIPDE}). They are approximating schemes 
for (\ref{1SIPDE}) and max-min system respectively. We
highlight some of their properties in making the connection with
systems of reflected BSDEs with lower (resp. upper) obstacles.  Later
on we show that system (\ref{1SIPDE}) has a subsolution and a
supersolution as well. Finally by Perron's method we show that
it has a unique solution. As a by product we show also that the
max-min system has a unique solution. At the end, there is an
Appendix, where we give another definition of the viscosity solution
of system (\ref{1SIPDE}) which uses ``local'' maxima and minima and which is inspired by the work
by Barles-Imbert in \cite{barlesimbert}.
\section{ Preliminaries} Let $(\Omega,\cF,(\cF_t)_{t\geq 0},P)$ be a stochastic basis such
that $\cF_0$ contains all $P$-null sets of $\cF$, and
$\cF_{t^+}\triangleq\bigcap\limits_{\varepsilon>0}\cF_{t+\varepsilon}:=\cF_{t}$,
$t\geq 0$. We suppose that the filtration is generated by the
following two
mutually independent processes:\\
- a $d$-dimensional standard Brownian motion $(B_t)_{t\geq 0}$ ;\\
- a Poisson random measure $N$ on $\R_+\times E$, where $E\triangleq
\R^l-\{0\}$ ($l\geq 1$ fixed) is equipped with its Borel field
$\mathcal{B}_E$, with compensator $\nu(dt,de)=dtn(de)$, such that
$\{\hat{N}((0,t]\times A)=(N-\nu)((0,t]\times A)\}_{0\leq t\leq T}$
is an $\cF_t$-martingale for all $A\in\mathcal{B}_E$ satisfying
$n(A)<\infty$. The measure $n$ is assumed to be $\sigma$-finite on $(E,\mathcal{B}_E)$  and integrates $(1\wedge
|e|^2)_{e\in E}$. \ms

Let $T$ be a fixed positive constant and let $A^1$ (resp.  $A^2$)
denote the set of switching modes for player 1 (resp. player 2)
whose cardinal is $m_1$ (resp. $m_2$). The set $ A^1\times A^2$ will
be sometimes simply denoted by $\Gamma$. For $\ijg$, we set
$A^1_i:=A^1-\{i\}$, $A^2_j:=A^2-\{j\}$ and
$\G^{-(i,j)}=\G-\{(i,j)\}$. Next, for $\vec{y}=(y^{kl})_{(k,l)\in
A^1\times A^2}\in \R^{m_1\times m_2}$ and $y_1\in \R$, we denote by
$[\vec{y}^{ij},y_1]$ the matrix obtained from
$\vec{y}$ by replacing the element $y^{ij}$ with $y_1$.

A function $\Phi :(t,x)\in [0,T]\times \R^k \mps\Phi(t,x)\in \R$ is
called of polynomial growth if there exist two non-negative real
constant $C$ and $\gamma$ such that for any $\tx$,
$$|\Phi(t,x)|\leq C(1+|x|^\gamma).$$Hereafter, this class of
functions is denoted by $\Pi_g$.\ms

We now define the probabilistic tools and sets we need later. Let:
\ms

\noindent (i) $\cP$ be the $\sigma$-algebra of $\cF_t$-predictable subsets of $\Omega\times[0,T]$;\\
\noindent (ii) $\cH^{2}:= \{\varphi:=(\varphi_t)_{t\leq T}$ is an
$\R^d$-valued, ${\cal F}_t$-progressively measurable process s.t.
$\|\varphi\|^2_{\cH^2}:=\E(\int^T_0{\vert {\varphi_t}\vert}^2dt)<\infty\}$
;\\
\noindent (iii) $\cS^2:= \{\xi:=(\xi_t)_{t\leq T}$ is an
$\R$-valued, ${\cal F}_t$-adapted RCLL process  s.t.
$\|\xi\|^2_{\cS^2}:=\E[\sup_{0\leq t\leq T}{\vert
{\xi_t}\vert}^2]<\infty\}$ ; $\cA^2$ is the subspace
of $\cS^2$ of continuous non-decreasing processes null at $t=0$ ; \\
\noindent (iv) $\cH^2(\tilde{N}):= \{U:\Omega\times[0,T]\times
E\rightarrow \R, \cP\otimes\mathcal{B}_E$-measurable and s.t.
$\|U\|^2_{\cH^2(\tilde{N})}:=\E(\int^T_0\int_E|U_t(e)|^2n(de)dt)<\infty\}
$.
\medskip

The main objective of this paper is to investigate the problem of existence and uniqueness of a viscosity
solutions $\vec{v}(t,x):=(v^{ij}(t,x))_{(i,j)\in A^1\times A^2}$ of the following system of non-local variational inequalities (SVI in short) or IPDEs with upper and lower interconnected obstacles: $\forall (i,j)\in
A^1\times A^2$,\be \label{SIPDE}\left\{
    \begin{array}{ll}
\min\{(v^{ij}-L^{ij}[\vec{v}])(t,x) ; \max\{(v^{ij}-U^{ij}[\vec{v}])(t,x);\\
-\partial_t v^{ij}(t,x)-\cL
v^{ij}(t,x)-g^{ij}(t,x,(v^{kl}(t,x))_{(k,l)\in
A^1\times A^2},\sigma(t,x)^\top D_x v^{ij}(t,x),I_{ij}(t,x,v^{ij}))\}\}=0\,;\\\\
     v^{ij}(T,x)=h^{ij}(x),
    \end{array}
    \right.\ee
where for any $\ijg$, $(t,x)\in [0,T]\times \R^k$ and $\phi\in
\cC^{1,2}$, $L^{ij}[\vec{v}]$, $U^{ij}[\vec{v}]$, ${\cal L}v^{ij}(t,x)$ and $I_{{ij}}(t,x,\phi)$ are given in 
(\ref{1SIPDEnot}). The functions $\underline{g}_{ik}, \overline{g}_{ik}$, $\beta$ and
$\gamma^{ij}$ are given and will be specified more later.

Next for $\d>0$, $(t,x)\in \esp$, $\zeta \in \R^k$, $\phi$ a
$\cC^{1,2}$-function and $\ijg$, let us set:$$\begin{array}{l} (a)\,
I^1_\delta(t,x,\phi)=\int_{|e|\leq
\delta}(\phi(t,x+\beta(x,e))-\phi(t,x)-D_x\phi(t,x)\beta(x,e))n(de);\\
(b)\, I^2_\delta(t,x,\zeta ,\phi)=\int_{|e|\geq\delta
}(\phi(t,x+\beta(x,e))-\phi(t,x)-\zeta \beta(x,e))n(de);\\
(c) \,I_{ij}^{1,\d}(t,x,\phi)=\int_{|e|\leq
\delta}(\phi(t,x+\beta(x,e))-\phi(t,x))\gamma^{ij}(x,e)n(de);\\
(d)\, I_{ij}^{2,\d}(t,x,\phi)=\int_{|e|\geq\delta}(\phi(t,x+\beta(x,e))-\phi(t,x))\gamma^{ij}(x,e)n(de);\\
(e)\,\,\bar \cL\phi(t,x):=b(t,x)^\top
D_x\phi(t,x)+\frac{1}{2}Tr[\sigma\sigma^\top(t,x)D_{xx}^2\phi(t,x)].
\end{array}$$
Note that for any $\d>0$ and $\ij$,
$$
I(t,x,\phi)=I^1_\delta(t,x,\phi)+I^2_\delta(t,x,D_x\phi,\phi)
\mbox{ and }I_{{ij}}(t,x,\phi)=I_{ij}^{1,\delta}(t,x,\phi)+I_{ij}^{2,\delta}(t,x,\phi).
$$
Next the following assumptions will be in force throughout the rest of
this paper.\\
\noindent\underline {\bf(A0)}:
    \ms

\no (i) The function $b(t,x)$ (resp. $\sigma(t,x)$): $[0,T]\times
\R^k\rightarrow \R^k$ (resp. $\R^{k\times d}$) is jointly continuous
in $(t,x)$ and Lipschitz continuous w.r.t. $x$, meaning that there
exists a non-negative constant $C$ such that for any $(t,x,x')\in
[0,T]\times \R^{k+k}$ we
have:$$|\sigma(t,x)-\sigma(t,x')|+|b(t,x)-b(t,x')|\leq C|x-x'|.$$
Combining this property with continuity one deduces that $b$ and
$\sigma$ are of linear growth w.r.t. $x$, i.e.,
$$|b(t,x)|+|\sigma(t,x)|\leq C(1+|x|).$$ (ii) The function $\beta:
\R^k\times E\rightarrow \R^k$ is measurable, and
such that for some real $K$,  $$|\beta(x,e)|\leq K(1\wedge|e|)\mbox{
and }|\beta(x,e)-\beta(x',e)|\leq K|x-x'|(1\wedge|e|), \,\,\forall
e\in E \mbox{ and }x,x'\in \R^k.$$ \noindent\underline {\bf(A1)}:
\ms

\no For any $\ijg$, the function $g^{ij}:\,\,(t,x,\vec{y},z,q)\in [0,T]\times \R^{k+m_1
\times m_2+d+1}\longmapsto g^{ij}(t,x,\vec{y},z,q) \in \R$ verifies:\\
(i) it is continuous in $(t,x)$ uniformly w.r.t. the other variables
$(\vec{y},z,q)$ and for any $(t,x)$  the mapping
$(t,x)\mapsto g^{ij}(t,x,0,0,0)$ is of polynomial growth ;\\
(ii) it satisfies the standard hypothesis of Lipschitz continuity
w.r.t. the variables $(\vec{y},z,q)$, i.e. for any $(t,x)\in
[0,T]\times \R^k$, $(\vec{y}_1,\vec{y}_2)\in (\R^{m_1\times m_2})^2$
and $(z_1,z_2)\in (\R^{d})^2$, $q_1,q_2\in \R$, it holds
$$|g^{ij}(t,x,\vec{y}_1,z_1,q_1)-g^{ij}(t,x,\vec{y}_2,z_2,q_2)|\leq
C(|\vec{y}_1-\vec{y}_2|+|z_1-z_2|+|q_1-q_2|),$$ where $|\vec{y}|$
stands for the standard Euclidean norm
of $\vec{y}$ in $\R^{m_1\times m_2}$; \\
(iii) the mapping $q\mapsto g^{ij}(t,x,y,z,q)$ is non-decreasing,
for all fixed $(t,x,y,z)\in[0,T]\times \R^{k+m_1\times m_2+d}$.\qed \\
\ms

\noindent Next for any $\ijg$, the function $\gamma^{ij}:\R^k\times E
\rightarrow \R$ verifies for some constant $C$ :
\be\label{condgamma}
\begin{array}{l}
\mb{(a)}\,\, |\gamma^{ij}(x,e)-\gamma^{ij}(x',e)|\leq C|x-x'|(1\wedge|e|),~~x,x'\in
\R^k \mbox{ and } e\in E\, ;\\
\mb{(b) } \q 0\leq\gamma^{ij}(x,e)\leq C(1\wedge|e|),~~x\in
\R^k\mbox{ and } e\in E. \ea \ee Finally let us define functions
$(f^{ij})_{\ij}$, on $[0,T]\times \R^{k+m_1\times m_2+d}\times {\cal
L}_\R^2(E,{\cal B}_E,n)$, as follows:\be \label{deffij}f^{ij}(t,x,\vec
y,z,u):=g^{ij}(t,x,\vec y,z,\pin u(e)\gamma^{ij}
(x,e)n(de)).\ee

\noindent\underline {\bf(A2)}: \underline {Monotonicity}: For any
$\ijg$ and any $(k,l)\neq(i,j)$, the mapping $y^{kl}\rightarrow
g^{ij}(t,x,\vec{y},z,u)$ is non-decreasing. \qed \bs

\noindent\underline {\bf(A3)}: \underline{The non free loop
property}: The switching costs $\underline{g}_{ik} $ and $
\bar{g}_{jl} $ are non-negative, jointly continuous in $(t,x)$,
belong to $\Pi_g$ and satisfy the following condition: \ms \no For
any loop in $A^1\times A^2$, i.e., any sequence of pairs
$(i_1,j_1),\ldots,(i_N,j_N)$
 of $A^1\times A^2$ such that $(i_N,j_N)=(i_1,j_1)$, \mbox{card}$\{
(i_1,j_1),\ldots,(i_N,j_N)\}=N-1$ and $\forall \,\,q=1,\ldots,N-1$,
either $i_{q+1}=i_q$ or $j_{q+1}=j_q$, we have:
\be\label{nonfreeloop3}\forall (t,x)\in \esp,\,\,
\sum_{q=1,N-1}\varphi_{i_qj_{q}}(t,x)\neq 0, \ee where for any
$q=1,\ldots,N-1,\,\,
\varphi_{i_qj_{q}}(t,x)=-\underline{g}_{i_qi_{q+1}}(t,x)\ind_{i_q\neq
i_{q+1}}+\bar{g}_{j_qj_{q+1}}(t,x)\ind_{j_q\neq j_{q+1}}$. \qed \bs

\noindent\underline {\bf(A4)}: The functions $h^{ij}:
\R^k\rightarrow \R$ are continuous w.r.t. $x$, belong to class
$\Pi_g$ and satisfy:
$$\forall \ijg, \,\,x\in \R^k,~~\max\limits_{k\in
A^1_i}(h^{kj}(x)-\underline{g}_{ik}(T,x))\leq h^{ij}(x)\leq
\min\limits_{l\in A^2_j}(h^{il}(x)+\overline{g}_{jl}(T,x)).\qed $$
To begin with let us point out that the non-local terms ${\cal
I}(t,x,\phi)$ and ${\cal I}_{ij}(t,x,\phi)$ introduced previously
are well defined under Assumptions (A0) since for any function
$\phi$ of class $\cC^{1,2}$, by the mean value theorem, we have
$$|\phi(t,x+\beta(x,e))-\phi(t,x)-
D_x\phi(t,x)\beta(e,x)|\leq C^{(1)}_{t,x}|\beta(x,e)|^2\leq
C^{(1)}_{t,x}(1\wedge|e|)^2,$$ and
$$|\g^{ij}(x,e)(\phi(t,x+\beta(x,e))-\phi(t,x))|\leq
C^{(2)}_{t,x}|\beta(x,e)\g^{ij}(x,e)|\leq
C^{(2)}_{t,x}(1\wedge|e|^2)$$where $C^{(1)}_{t,x}$ and
$C^{(2)}_{t,x}$ are bounded constants. They are the bounds of the
first and second derivatives of $y\mapsto \phi(t,y)$ in
$B(x,K_\beta)$ where $K_\beta$ is a bound of the function $\beta$.
\qed \bs

Let us consider now the following SDE of jump-diffusion type
($\tx$): \be\label{cmSDE}\ba{l} X^{t,x}_s=x+\int_t^s
b(r,X^{t,x}_r)dr+\int^s_t\sigma(r,X^{t,x}_r)dW_r+\int^s_t\int_E\beta(X^{t,x}_{r-},e)
\hat{N}(dr,de),~s\in [t,T].\ea \ee The existence and uniqueness of
the solution
$X^{t,x}:=(X^{t,x}_s)_{s\in [t,T]}$ follows from \cite{fujiwara}.\\
We now precise the definition of the viscosity solution of system
(\ref{SIPDE}). First, for a locally bounded function $u$: $(t,x)\in
[0,T]\times \R^k\mps u(t,x)\in \R$, we define its lower
semi-continuous (lsc for short) envelope $u_*$, and upper
semi-continuous (usc for short) envelope $u^*$ as
following:$$u_*(t,x)=\varliminf\limits_{(t',x')\rightarrow
(t,x),~t'<T}u(t',x'), ~~~
u^*(t,x)=\varlimsup\limits_{(t',x')\rightarrow
(t,x),~t'<T}u(t',x').$$
\begin{defi}A function $\vec{u}(t,x)=(u^{ij}(t,x))_{(i,j)\in
A^1\times A^2}:[0,T]\times \R^k\rightarrow \R^{m_1\times m_2}$ such
that $u^{ij}$ is lsc
(resp. usc) and belongs to $\Pi_g$, is said to be a viscosity supersolution  (resp.
subsolution) of (\ref{SIPDE}) if for any $\ij$:
\ms

- $u^{ij}(T,x_0)\geq \mb{(resp. }\leq ) \,\,h^{ij}(x_0), \forall x_0\in \R^k.$

- for any $(t_0,x_0)\in (0,T)\times \R^k$  and any test function
$\phi\in \cC^{1,2}([0,T]\times \R^k)$ such  that $(t_0,x_0)$ is a
global minimum (resp.  maximum) point of $u^{ij}-\phi$ and
$u^{ij}(t_0,x_0)=\phi(t_0,x_0)$ ,
$$\left\{
    \begin{array}{ll}
\min\{(u^{ij}-L^{ij}[\vec{u}])(t_0,x_0);\q
\max\{(u^{ij}-U^{ij}[\vec{u}])(t_0,x_0);\q\\\\
\qq -\partial_t\phi(t_0,x_0)-b(t_0,x_0)^\top
D_x\phi(t_0,x_0)-\frac{1}{2}Tr[\sigma
\sigma^\top(t_0,x_0)D^2_{xx}\phi(t_0,x_0)]-I(t_0,x_0,\phi)\\\\
\qq -g^{ij}(t_0,x_0,(u^{kl}(t_0,x_0))_{(k,l)\in A^1\times
A^2},\sigma^\top(t_0,x_0)D_x\phi(t_0,x_0),I_{ij}(t_0,x_0,\phi))\}\}
\geq 0 \,\,(resp. \leq 0). \end{array} \right.$$ A function
$\vec{u}=(u^{ij}(t,x))_{(i,j)\in A^1\times A^2}$ of $\Pi_g$ is
called a viscosity solution of (\ref{SIPDE}) if
$(u^{ij}_*(t,x))_{(i,j)\in A^1\times A^2}$ (resp.
$(u^*_{ij}(t,x))_{(i,j)\in A^1\times A^2}$) is a viscosity
supersolution  (resp.  subsolution) of (\ref{SIPDE}).
\end{defi}
\ms

\begin{rem} By taking $\bar g_{jl}\equiv +\infty$ (resp.
 $\underline g_{ik}\equiv +\infty$) for any $j,l\in A^2$ (resp.
 $i,k\in A^1$) we obtain the definition of a viscosity solution of the system of
 variational inequalities with interconnected lower (resp. upper)
 obstacles.\qed
\end{rem}
\section{Uniqueness of the viscosity solution
of the non-local SVI  (\ref{SIPDE})}In this section we will show the uniqueness of
the viscosity solution of (\ref{SIPDE}) as a corollary of a
comparison result. In the same way with \cite{boualem}, Lemma 4.1,
we can prove the following lemma.
\begin{lem}
Let $(u^{ij})_{(i,j)\in A^1\times A^2}$ (resp.
$({w}^{ij})_{(i,j)\in A^1\times A^2}$) be an $usc$
subsolution  (resp.  $lsc$ supersolution) of (\ref{SIPDE}) which
belongs to $\Pi_g$. For  $(t,x)\in [0,T]\times R^k$, let
$\Gamma(t,x)$ be the following set:$$\Gamma(t,x):=\{(i,j)\in
A^1\times A^2, u^{ij}(t,x)-w^{ij}(t,x)=\max\limits_{(k,l)\in
A^1\times A^2}(u^{kl}(t,x)-w^{kl}(t,x))\}.$$ Then there exists
$(i_0,j_0)\in\Gamma(t,x)$ such that \be\label{41} u^{i_0
j_0}(t,x)>L^{i_0 j_0}[\vec{u}](t,x)\mbox{ and }w^{i_0
j_0}(t,x)<U^{i_0 j_0}[\vec{w}](t,x).\ee
\end{lem}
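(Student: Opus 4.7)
\bigskip
\noindent\textbf{Proof plan.} The plan is to argue by contradiction, constructing a finite cycle of pairs in $\Gamma(t,x)$ along which the signed switching costs telescope to zero, in direct conflict with the non free loop property (A3).

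Fix $(t,x)$ and set $M:=\max_{(k,l)\in A^1\times A^2}(u^{kl}-w^{kl})(t,x)$. Assume no $(i_0,j_0)\in\Gamma(t,x)$ satisfies (\ref{41}). Then for every $(i,j)\in\Gamma(t,x)$ at least one of
\begin{equation*}
u^{ij}(t,x)\leq L^{ij}[\vec{u}](t,x)\ (\text{type I})\qquad\text{or}\qquad w^{ij}(t,x)\geq U^{ij}[\vec{w}](t,x)\ (\text{type II})
\end{equation*}
must hold. Starting from an arbitrary $(i_1,j_1)\in\Gamma(t,x)$, I would recursively generate a sequence $(i_n,j_n)_{n\geq 1}\subseteq\Gamma(t,x)$ by the following dichotomy: if $(i_n,j_n)$ is of type I, pick $i_{n+1}\neq i_n$ attaining the maximum in $L^{i_n j_n}[\vec{u}]$ and set $j_{n+1}=j_n$; otherwise (so necessarily type II, strictly), pick $j_{n+1}\neq j_n$ attaining the minimum in $U^{i_n j_n}[\vec{w}]$ and set $i_{n+1}=i_n$.

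The crux is to verify that the new pair again belongs to $\Gamma(t,x)$ and that the value of $u$ (and of $w$) changes by exactly the corresponding switching cost. In type I, the supersolution property of $w$ provides $w^{i_n j_n}\geq L^{i_n j_n}[\vec{w}]\geq w^{i_{n+1} j_n}-\underline{g}_{i_n i_{n+1}}$, while by construction $u^{i_{n+1} j_n}\geq u^{i_n j_n}+\underline{g}_{i_n i_{n+1}}$; subtracting and invoking the maximality of $M$ forces equalities in both, so that $(i_{n+1},j_n)\in\Gamma(t,x)$ and $u^{i_{n+1}j_n}(t,x)-u^{i_n j_n}(t,x)=\underline{g}_{i_n i_{n+1}}(t,x)=-\varphi_{i_n j_n}(t,x)$. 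In type II, since we are not in type I we have $u^{i_n j_n}>L^{i_n j_n}[\vec{u}]$; applying the viscosity subsolution inequality at $(t,x)$ to \emph{any} $\cC^{1,2}$ test function from above (such test functions exist because $u^{i_n j_n}$ is usc and of polynomial growth) yields, in particular, the pointwise relation $u^{i_n j_n}\leq U^{i_n j_n}[\vec{u}]\leq u^{i_n j_{n+1}}+\overline{g}_{j_n j_{n+1}}$. Combining this with $w^{i_n j_{n+1}}\leq w^{i_n j_n}-\overline{g}_{j_n j_{n+1}}$ (which follows from $w^{i_n j_n}\geq U^{i_n j_n}[\vec{w}]$) yields, by the same maximality argument, $(i_n,j_{n+1})\in\Gamma(t,x)$ and $u^{i_n j_{n+1}}-u^{i_n j_n}=-\overline{g}_{j_n j_{n+1}}=-\varphi_{i_n j_n}$.

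Since $\Gamma(t,x)\subseteq A^1\times A^2$ is finite, the sequence must eventually revisit a previously chosen pair. Truncating at the first such recurrence produces a simple loop $(i_1,j_1),\ldots,(i_N,j_N)=(i_1,j_1)$ with $\mathrm{card}\{(i_1,j_1),\ldots,(i_N,j_N)\}=N-1$, in which either $i_{q+1}=i_q$ or $j_{q+1}=j_q$ at every step. Telescoping the relation $u^{i_{q+1}j_{q+1}}(t,x)-u^{i_q j_q}(t,x)=-\varphi_{i_q j_q}(t,x)$ around this loop gives $\sum_{q=1}^{N-1}\varphi_{i_q j_q}(t,x)=0$, which directly contradicts (A3). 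The main technical point one must isolate is the passage from the viscosity subsolution formulation (which involves a test function) to the pointwise obstacle inequality $u^{ij}\leq U^{ij}[\vec{u}]$ in the type II step; this rests on the fact that the obstacle terms do not depend on the derivatives of $\phi$, so that the inequality is inherited from any admissible test function, whose existence is guaranteed by the usc plus polynomial-growth framework.
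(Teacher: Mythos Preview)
Your argument follows the same contradiction-to-a-forbidden-loop strategy as in \cite{boualem}, Lemma~4.1, which is exactly what the paper invokes, and the overall structure is correct.

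There is, however, one technical imprecision worth flagging. You assert that the pointwise obstacle inequalities---$w^{i_nj_n}\geq L^{i_nj_n}[\vec{w}]$ in type~I, and $u^{i_nj_n}\leq U^{i_nj_n}[\vec{u}]$ (given $u^{i_nj_n}>L^{i_nj_n}[\vec{u}]$) in type~II---follow because ``such test functions exist since $u^{i_nj_n}$ is usc and of polynomial growth.'' This is not true in general: an usc function need not admit a $\cC^{1,2}$ test function touching from above at a \emph{given} point (take, e.g., $u(t_0,x_0)=0$, $u(t_0,x_0+1/n)=1/\sqrt{n}$, and $u=-1$ elsewhere). The correct justification is the standard penalization device: minimize $w^{ij}(s,y)+n(|s-t|^2+|y-x|^2)+|y|^{2\gamma+2}$ to produce points $(t_n,x_n)\to(t,x)$ with $w^{ij}(t_n,x_n)\to w^{ij}(t,x)$ at which a test function \emph{does} exist, apply the viscosity inequality there, and pass to the limit using the lower (resp.\ upper) semicontinuity of $L^{ij}[\vec{w}]$ (resp.\ $L^{ij}[\vec{u}]$ and $U^{ij}[\vec{u}]$). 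Once these pointwise obstacle relations are secured, the remainder of your loop construction and the telescoping to $\sum_{q=1}^{N-1}\varphi_{i_qj_q}(t,x)=0$, contradicting~(A3), goes through exactly as you wrote.
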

We now give the comparison result.
\begin{thm}
Let $\vec u=(u^{ij})_{(i,j)\in A^1\times A^2}$ (resp. $\vec
w=({w}^{ij})_{(i,j)\in A^1\times A^2}$) be an $usc$ subsolution
(resp.  $lsc$ supersolution) of (\ref{SIPDE}) which belongs to
$\Pi_g$. Then it holds that:
$$\forall \,\,\ijg\, \mbox{ and }\tx,\,\,u^{ij}(t,x)\leq w^{ij}(t,x).$$
\end{thm}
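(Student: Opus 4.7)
The plan is to argue by contradiction, following the viscosity comparison machinery for systems with obstacles adapted to the non-local setting. Suppose that $u^{ij}(t,x) > w^{ij}(t,x)$ at some point and mode. Since $u^{ij}, -w^{ij} \in \Pi_g$, I introduce a penalization $\theta e^{\lambda t}\psi(x)$ with $\psi(x)=1+|x|^{2p}$ for $p$ strictly larger than the growth exponent of $\vec u, \vec w$, and consider
$$M_\theta := \sup_{(i,j),\,(t,x)}\bigl\{e^{\lambda t}(u^{ij}(t,x)-w^{ij}(t,x)) - \theta\psi(x)\bigr\},$$
which is attained at some $(\bar t,\bar x,\bar i,\bar j)$ and stays positive for $\theta$ small. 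Assumption (A4), together with the inequality $u^{ij}(T,\cdot)\le h^{ij}\le w^{ij}(T,\cdot)$, rules out $\bar t=T$, so the maximum is interior.

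At $(\bar t,\bar x)$ I apply Lemma 3.1 to select a mode $(i_0,j_0)\in\Gamma(\bar t,\bar x)$ for which
$$u^{i_0j_0}(\bar t,\bar x) > L^{i_0j_0}[\vec u](\bar t,\bar x) \quad\text{and}\quad w^{i_0j_0}(\bar t,\bar x) < U^{i_0j_0}[\vec w](\bar t,\bar x).$$
Upper/lower semi-continuity propagates these strict inequalities to a neighborhood, which forces the third (PDE) branch of the min-max cascade to be the active one in the viscosity inequalities for $u^{i_0j_0}$ and $w^{i_0j_0}$ at nearby test points. This reduces the problem to comparing a scalar parabolic IPDE at the mode $(i_0,j_0)$, whose source term $g^{i_0j_0}$ still couples to all modes through its $\vec y$ argument.

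I then double the variables: for $\varepsilon>0$ set
$$\Phi_\varepsilon(t,x,y) := e^{\lambda t}\bigl(u^{i_0j_0}(t,x)-w^{i_0j_0}(t,y)\bigr) - \frac{|x-y|^2}{\varepsilon} - \theta\psi(x) - \theta\psi(y),$$
whose maximum $(t_\varepsilon,x_\varepsilon,y_\varepsilon)$ converges, up to a subsequence, to $(\bar t,\bar x,\bar x)$ with $|x_\varepsilon-y_\varepsilon|^2/\varepsilon\to 0$. The non-local Crandall--Ishii lemma in the parabolic form of Barles--Imbert (which motivates the alternative definition of the Appendix) supplies second-order jets with matrices $X_\varepsilon, Y_\varepsilon$ satisfying the usual matrix inequality and, for each $\delta>0$, a clean decomposition of the non-local operator into a small-jump part $I^1_\delta$ controlled by $|\beta(x,e)|\le K(1\wedge|e|)$ and the jet matrices, and a large-jump part $I^2_\delta$ evaluated directly on $u^{i_0j_0}$ and $w^{i_0j_0}$, controlled by the fact that $(x_\varepsilon,y_\varepsilon)$ is a maximum point of $\Phi_\varepsilon$.

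Subtracting the two viscosity inequalities at $(i_0,j_0)$ and passing to the limit successively in $\varepsilon\to 0$, $\delta\to 0$ and $\theta\to 0$ (for $\lambda$ chosen large enough beforehand), the second-order and drift contributions vanish by the Lipschitz regularity of $\sigma, b$ in (A0), the small-jump contribution vanishes by the $(1\wedge|e|)$ bound on $\beta$, and the coupling inside $g^{i_0j_0}$ is handled via Lipschitz continuity in (A1)(ii), the off-diagonal monotonicity (A2), and crucially the monotonicity in $q$ from (A1)(iii) combined with $\gamma^{i_0j_0}\ge 0$, which ensures that the $I_{i_0j_0}$-difference enters with the correct sign. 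What survives is essentially $\lambda M_0\le 0$, contradicting $M_0>0$. The principal obstacle will be the simultaneous bookkeeping of the mode selection from Lemma 3.1, the jump-threshold splitting at level $\delta$, and the Crandall--Ishii matrix inequality, so that the three limits $\varepsilon\to 0$, $\delta\to 0$, $\theta\to 0$ can be taken in a consistent order without losing control of any non-local piece; a secondary subtlety is ensuring that the selected pair $(i_0,j_0)$ remains an active mode throughout the doubling argument.
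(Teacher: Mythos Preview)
Your outline follows the same route as the paper: contradiction, Lemma 3.1 to select a mode where both obstacles are strictly inactive, variable doubling, the non-local Jensen--Ishii lemma of Barles--Imbert, and a contradiction for large $\lambda$. The paper separates the exponential weight into its own reduction step (transforming $g^{ij}$ so that it becomes strictly decreasing in $y^{ij}$, see (\ref{49})), whereas you build $e^{\lambda t}$ into the test functional directly; this is only an organizational difference.

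There is, however, a genuine technical gap in your order of limits. You pass ``successively in $\varepsilon\to 0$, $\delta\to 0$'', but the small-jump pieces $I^1_\delta$ and $I^{1,\delta}_{i_0j_0}$ are second-order Taylor remainders of the doubling test function, whose Hessian is of size $2/\varepsilon$; after subtracting the two viscosity inequalities these terms contribute on the right-hand side a quantity of order $\varepsilon^{-1}\int_{|e|\le\delta}(|\beta(x_\varepsilon,e)|^2+|\beta(y_\varepsilon,e)|^2)\,n(de)$, which blows up as $\varepsilon\to 0$ for any fixed $\delta>0$. The paper therefore sends $\delta\to 0$ \emph{first} (its (\ref{estimlocal1}) and (\ref{equnicite3})), then $\rho\to 0$, and only then $\varepsilon\to 0$. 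A related omission is the missing strict-localization term: the paper adds $|t-t^*|^2+\rho|x-x^*|^4$ to the doubled functional precisely to force $(t_0,x_0,y_0)\to(t^*,x^*,x^*)$ together with convergence of the function values (its (\ref{limitx})--(\ref{limitx2})); without it the doubled maxima need not converge to the point where Lemma 3.1 was applied, and the strict obstacle inequalities (\ref{411})--(\ref{412}) may fail along the sequence. Finally, your endgame ``$\lambda M_0\le 0$'' is too optimistic: the off-diagonal coupling leaves a residual $\sum_{(k,l)\neq(i_0,j_0)}C_{kl}(u^{kl}-w^{kl})^+$ on the right, and the contradiction comes from choosing $\lambda$ strictly larger than the aggregate Lipschitz constant (the paper takes $\lambda>m_1m_2\sum_{(i,j)\in\Gamma} C_{ij}$), not from the right-hand side vanishing.
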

\begin{proof}Let $C$ and $\rho$  be positive constants, which exist thanks to the polynomial
growth of $\vec u$ and
$\vec w$, such that for any $\ijg$,
$$
|u^{ij}(t,x)|+|w^{ij}(t,x)|\leq C(1+|x|^\rho).
$$ There exists a positive constant $\l_0$ such that for any
$\l\geq \l_0$ and $\theta >0$, $\tilde
u^{\theta}:=(u^{ij}(t,x)-\theta e^{\l t}(1+|x|^{2\rho+2})_{\ijg}$
(resp. $\tilde w^\theta=(w^{ij}(t,x)-\theta e^{\l
t}(1+|x|^{2\rho+2})_{\ijg}$) is a subsolution (resp. supersolution)
of (\ref{SIPDE}) (see \cite{zhao}, pp.1634). Therefore it is enough
to show that $\tilde u^{\theta}\leq \tilde w^\theta$ and to take the
limit as $\theta \rightarrow 0$ to obtain the desired result.
Finally with the previous statement w.l.o.g one can assume that
there exists a real constant $\bar R>0$ such that for any $|x|\geq
\bar R$, $w^{ij}(t,x)>0$ (resp. $u^{ij}(t,x)<0$) for any $(i,j)$ and
$t\in [0,T]$. \ms

The proof now will be divided into two steps.

\ms \no {\bf Step 1}: Let $C_{ij}$ be the Lipschitz constant of
$g^{ij}$ w.r.t. $\vec{y}$. We first assume that there exists a
constant $\lambda_1>m_1m_2. \sum_{\ijg}C_{ij}$ such that for any
$\ijg$, \be\label{49}g^{ij}(t,x,[\vec{y
}^{ij},\zeta_1],z,q)-g^{ij}(t,x, [\vec{y }^{ij},\zeta_2],z,q)\leq
-\lambda(\zeta_1- \zeta_2),\ee for any $\zeta_1\geq \zeta_2$ in $\R$ and
$(t,x,\vec y,z,q)$ in their respective spaces. \ms

\noindent We proceed by contradiction. Assume there exists
$(\bar t,\bar x)\in [0,T]\times \R^k$ such that:
\be\label{46}\max\limits_{\ijg}(u^{ij}-w^{ij})(\bar t,\bar x)>0.\ee
Therefore there exists $(t^*,x^*)\in [0,T[\times B(0,\bar R)$
($B(0,\bar R)$ is the open ball in $\R^k$ centered in $0$ and of
radius $\bar R$ and w.l.o.g. we assume $t^*>0$) such
that:\be\label{48} \max\limits_{(t,x)\in[0,T]\times
\R^k}\max\limits_{\ijg}\{(u^{ij}-w^{ij})(t,x)\}=\max\limits_{(t,x)\in[0,T]\times
B(0,\bar
R)}\max\limits_{\ijg}\{(u^{ij}-w^{ij})(t,x)\}=\max\limits_{\ijg}\{(u^{ij}-w^{ij})
(t^*,x^*)\}>0. \ee Next let $(i_0,j_0)\in \Gamma(t^*,x^*)$ that
satisfies (\ref{41}). For $\eps>0$, $\rho>0$, let
$\Phi^{i_0,j_0}_{\eps,\rho}$ be the function defined as
follows:$$\Phi^{i_0j_0}_{\eps,\rho}(t,x,y):=(u^{i_0j_0}(t,x)-w^{i_0j_0}
(t,y))-\frac{|x-y|^2}{\eps}-|t-t^*|^2-\rho|x-x^*|^4.$$ Let
$(t_0,x_0,y_0)$ be such that
$$\Phi^{i_0j_0}_{\eps,\rho}(t_0,x_0,y_0)=\max\limits_{(t,x,y)\in [0,T]\times
\bar{B}(0,\bar R)^2}
\Phi^{i_0j_0}_{\eps,\rho}(t,x,y)=\max\limits_{(t,x,y)\in [0,T]\times
\R^{k+k}}\Phi^{i_0j_0}_{\eps,\rho}(t,x,y).$$ The second equality is
valid since when $|x|\geq \bar R$ (resp. $|y|\geq \bar R$),
$u^{i_0j_0}(t,x)<0$ (resp. $w^{i_0j_0}(t,y)>0$). On the other hand
$(t_0,x_0,y_0)$ depends actually on $\eps$ and $\rho$ which we omit
for sake of simplicity. Next as usual we have \be\label{limitx}
\lim\limits_{\eps\rightarrow
0}(t_0,x_0,y_0)=(t^*,x^*,x^*),~~\lim\limits_{\eps\rightarrow
0}\frac{|x_0-y_0|^2}{\eps}=0, \ee and \be \label{limitx2}
\lim\limits_{\eps \rw 0}
(u^{i_0j_0}(t_0,x_0),w^{i_0j_0}(t_0,y_0))=(u^{i_0j_0}({t}^*,{x}^*),w^{i_0j_0}({t}^*,{x}^*)).\ee
Therefore for $\eps$ small enough it holds
\be\label{411}u^{i_0j_0}(t_0,x_0)>\max\limits_{k\in
A^1_{i_0}}(u^{kj_0}(t_0,x_0)-\underline{g}_{i_0 k}(t_0,x_0)),\ee and
\be\label{412}w^{i_0j_0}(t_0,y_0)<\min\limits_{l\in
A^2_{j_0}}(w^{i_0l}(t_0,y_0)+\overline{g}_{j_0 l}(t_0,y_0)).\ee Once
more for $\eps$ small enough, we are able to apply Jensen-Ishii's
Lemma for non-local operators established by Barles and Imbert
(\cite{guy}, pp. 583) (one can see also \cite{Bis}, Lemma 4.1, pp.
64) with  $u^{i_0j_0}$, $w^{i_0j_0}$ and
$\phi(t,x,y):=\frac{|x-y|^2}{\eps}+|t-t^*|^2+\rho|x-x^*|^4$ at
$(t_0,x_0,y_0)$. For any $\d\in (0,1)$ there exist
$p^\eps_u,q^\eps_u$ in $\R$, $p^\eps_w$, $q^\eps_w$ in $\R^k$ and
$M^\eps_u$, $M^\eps_w$ two symmetric non-negative matrices of
$\R^{k\times k}$ such that: \ms

\no (i) \be\begin{aligned}
p^\eps_u-p^\eps_w=\partial_t{\phi}(t_0,x_0,y_0),
~~q^\eps_u=\partial_x{\phi}(t_0,x_0,y_0),~~q^\eps_w=-\partial_y{\phi}(t_0,x_0,y_0)\end{aligned}\ee
and
 \be \label{unique7} \left(\begin{matrix}
        M^0_u & 0 \cr
      0 & -M^0_w
      \end{matrix}\right)\leq \left(\begin{matrix}
     D^2_{xx}\psi_\rho(t_0,x_0)& 0 \cr
      0 & 0
      \end{matrix}\right)+ \frac{4}{\eps}\left(\begin{matrix}
      I_k & -I_k\cr
      -I_k & I_k
      \end{matrix}\right)\,\ee \mbox{where }$\psi_\rho(t,x):=\rho|x-x^*|^4+|t-t^*|^2$\,\,;\\
\be\label{uniqueinequa1}
\begin{aligned}
\mbox{
(ii)}\,\,&-p^\eps_u-b(t_0,x_0)^\top q^\eps_u-\frac{1}{2}Tr(\sigma(t_0,x_0)^\top
M^\eps_u\sigma(t_0,x_0))-g^{i_0j_0}(t_0,x_0,(u^{ij}(t_0,x_0))_{(i,j)\in A^1\times A^2},
\sigma(t_0,x_0)\tp q^\eps_u,\\
&I^{1,\d}_{i_0j_0}(t_0,x_0,\phi(t_0,.,y_0))+I^{2,\d}_{i_0j_0}(t_0,x_0,u^{i_0j_0}))
-I^{1}_\d(t_0,x_0,\phi(t_0,.,y_0))-I^{2}_\d(t_0,x_0,q^\eps_u,u^{i_0j_0})\leq
0\,;
\end{aligned}
\ee \be\label{uniqueinequa2}
\begin{aligned}
\mbox{
(iii)}\,\,&-p^\eps_w-b(t_0,y_0)\tp q^\eps_w-\frac{1}{2}Tr(\sigma(t_0,y_0)^\top
M^\eps_w\sigma(t_0,y_0))-g^{i_0j_0}(t_0,y_0,(w^{ij}(t_0,y_0))_{(i,j)\in A^1\times A^2},
\sigma(t_0,y_0)\tp q^\eps_w,\\
&I^{1,\d}_{i_0j_0}(t_0,y_0,-\phi(t_0,
x_0,.))+I^{2,\d}_{i_0j_0}(t_0,x_0,w^{i_0j_0}))
-I^{1}_\d(t_0,y_0,-\phi(t_0,
x_0,.))-I^{2}_\d(t_0,y_0,q^\eps_w,w^{i_0j_0})\geq 0.
\end{aligned}\ee
Next we are going to provide estimates for the non-local terms. By the same
argument as in \cite{zhao} pp.1645, we have:
\be\label{estimlocal2}\begin{array}{ll}
I^{2}_\d(t_0,x_0,q^\eps_u,u^{i_0j_0}) -I^{2}_\d(t_0,y_0,
q^\eps_w,w^{i_0j_0})&\leq C\frac{|x_0-y_0|^2}{\eps}\int_{ |e|\geq
\d}(1\wedge|e|)^2n(de)+I^2_\delta(t_0,x_0,D_x\psi_\rho(t_0,x_0),\psi_\rho)\\
{}&\leq
{C}\frac{|x_0-y_0|^2}{\eps}+I^2_\delta(t_0,x_0,D_x\psi_\rho(t_0,x_0),\psi_\rho).\end{array}\ee
On the other hand
$$
D^2_{xx}\phi(t,x,y)={2}{\eps^{-1}}I_k+D^2_{xx}\psi_\rho(t,x,y),\,\,
D^2_{yy}\phi(t,x,y)={2}{\eps^{-1}}I_k
$$ and by Taylor's expansion we have
$$\begin{array}{c}
\phi(t,x+\beta(x,e),y)-\phi(t,x)-D_x\phi(t,x,y)\beta(x,e)=\int_0^1(1-t)\b(x,e)^\top
D^2_{xx}\phi(t,x+t\beta(x,e),y)\beta(x,e)dt.  \end{array}
$$
It implies that
$$\begin{array}{c}
I^{1}_\d(t_0,x_0,\phi(t_0,.,y_0))=\int_{|e|\leq \d} n(de)\int_0^1(1-t)\b(x_0,e)^\top
D^2_{xx}\phi(t_0,x_0+t\beta(x_0,e),y_0)\beta(x_0,e)dt
\end{array}
$$and similarly
$$\begin{array}{c}
I^{1}_\d(t_0,y_0,-\phi(t_0,x_0,.))=-\int_{|e|\leq \d} n(de)\int_0^1(1-t)\b(y_0,e)^\top
D^2_{yy}\phi(t_0,x_0, y_0+t\beta(y_0,e))\beta(y_0,e)dt.
\end{array}
$$
Consequently it holds that \be\label{estimlocal1}\begin{array}{l}
\lim_{\delta\rightarrow
0}I^{1}_\d(t_0,x_0,\phi(t_0,.,y_0))=\lim_{\delta\rightarrow
0}I^{1}_\d(t_0,y_0,- \phi(t_0,x_0,.))=0.\end{array} \ee Next by the
definition of $(t_0,x_0,y_0)$, we have
$$\begin{array}{l}u^{i_0j_0}(t_0,x_0+
\beta(x_0,e)) -u^{i_0j_0}(t_0,x_0)\\\qq\qq \leq w^{i_0,j_0}(t_0,y_0+
\beta(y_0,e))- w^{i_0j_0}(t_0,y_0)+ \psi_\rho(t_0,x_0+\beta(x_0,e))-\psi_\rho(t_0,x_0)
\\
\qq\qq\qq  +\eps^{-1}\{|\beta(x_0,e) -\beta(y_0,e)|^2-2(x_0-y_0)(\beta(x_0,e)-\beta(y_0,e))\}.
\end{array}$$
Since $\gamma^{i_0j_0}$ is nonnegative, and by the assumptions on
$\beta$ (see (A0)-(i)), for any $\d>0$,
\be\label{7141}I^{2,\d}_{i_0j_0}(t_0,x_0,u^{i_0j_0})-I^{2,\d}_{i_0j_0}(t_0,y_0,w^{i_0j_0})\leq
I^{2,\d}_{i_0j_0}(t_0,x_0,\psi_\rho)+O({\eps^{-1}}{|x_0-y_0|^2}),\ee
and it is easy to check that
\be\label{7141(ii)}\begin{array}{c}|I^{2,\d}_{i_0,j_0}(t_0,x_0,\psi_\rho)|\leq C\rho
\int_{|z|\geq \d}1\wedge |e|^2n(de).\end{array}\ee
 On the other hand, since $\phi$ is a ${\cal C}^2$-function then, using once more Taylor's expansion to obtain:
\be \label{equnicite}\begin{array}{ll}
&I^{1,\d}_{i_0j_0}(t_0,x_0,\phi(t_0,.,y_0))= \int_{|e|\leq
\d}\{\phi(t_0,x_0+ \beta(x_0,e),y_0)-
\phi(t_0,x_0,y_0)\}\gamma^{i_0j_0}(x_0,e)n(de)
\\\\
{}&= \int_{|e|\leq \d}n(de)\int_0^1dt(1-t)\beta(x_0,e)^\top[ \frac{2}{\eps}\{x_0+t\beta(x_0,e)-y_0\}+D_x\psi_\rho (t_0,x_0+t\beta(x_0,e))]
\gamma^{i_0j_0}(x_0,e)\\\\{}&
=\frac{x_0-y_0}{\eps}\int_{|e|\leq \d}\beta(x_0,e)\tp \gamma^{i_0j_0}(x_0,e)n(de)+\,\,\frac{1}{3}
\int_{|e|\leq \d}|\beta(x_0,e)|^2\gamma^{i_0j_0}(x_0,e)n(de) \\\\{}&\qq\qq\qq+\int_{|e|\leq \d}n(de)\beta_(x_0,e)\tp D_x\psi_\rho(t_0,x_0+t\beta(x_0,e))\gamma^{i_0j_0}(x_0,e)
\end{array}\end{equation}
and \be \label{equnicite2}\begin{array}{ll}
I^{1,\d}_{i_0j_0}(t_0,y_0,- \phi(t_0,x_0,.))&=- \int_{|e|\leq
\d}\{\phi(t_0,x_0, y_0+ \beta(y_0,e))-
\phi(t_0,x_0,y_0)\}\gamma^{i_0j_0}(y_0,e)n(de)
\\\\{}&=\frac{x_0-y_0}{\eps}\int_{|e|\leq \d}\beta(y_0,e)\tp \gamma^{i_0j_0}(y_0,e)n(de)-\frac{1}{3}
\int_{|e|\leq \d}|\beta(x_0,e)|^2\gamma^{i_0j_0}(x_0,e)n(de)
\end{array}\ee which implies that
\be \label{equnicite3}\begin{array}{l}\lim_{\d \rightarrow 0}
I^{1,\d}_{i_0j_0}(t_0,x_0,\phi(t_0,.,y_0))= \lim_{\d \rightarrow 0}
I^{1,\d}_{i_0j_0}(t_0,y_0,- \phi(t_0,x_0,.))=0.\end{array}\ee Making
now the difference between (\ref{uniqueinequa1}) and
(\ref{uniqueinequa2}) yields \be \label{eqdifference}
\begin{array}{ll}
&-(p^\eps_u-p^\eps_w)-[b(t_0,x_0)\tp q^\eps_u-b(t_0,y_0)\tp
q^\eps_w]-\frac{1}{2}\{Tr[\sigma(t_0, x_0)\tp
M^\eps_u\sigma(t_0, x_0)-\sigma(t_0,y_0)\tp M^\eps_w\sigma(t_0, y_0)]\}\\
&-[g^{i_0j_0}(t_0,x_0,(u^{ij}(t_0,x_0))_{(i,j)\in A^1\times A^2},\sigma(t_0,x_0)\tp
q^\eps_u,I^{1,\d}_{i_0j_0}(t_0,x_0,\phi(t_0,.,y_0))+I^{2,\d}_{i_0j_0}(t_0,x_0,u^{i_0j_0}))\\
&-g^{i_0j_0}(t_0,y_0,(w^{ij}(t_0,y_0))_{(i,j)\in A^1\times
A^2},\sigma(t_0,y_0)\tp
q^\eps_w,I^{1,\d}_{i_0j_0}(t_0,y_0,-\phi(t_0,
x_0,.))+I^{2,\d}_{i_0j_0}(t_0,y_0,w^{i_0j_0}))]\\
&-I^{1,\delta}(t_0,x_0,\phi(t_0,.,y_0))+I^{1,\delta}(t_0,y_0,-
\phi(t_0,x_0,.)) -I^{2,\delta}(t_0,x_0,q^\eps_u,u^{i_0j_0})
+I^{2,\delta}(t_0,y_0, q^\eps_w,w^{i_0j_0})\leq 0.
\end{array}\ee
As $q\in \R\mapsto g^{ij}(t,x,\vec y,z,q)$ is non-decreasing and
Lipschitz then, by linearization procedure of Lipschitz functions,
there exists a bounded non-negative quantity $\Xi$ (which depends on
$\eps$, $\d$, etc.) such that
 \begin{align*}
&-[g^{i_0j_0}(t_0,x_0,(u^{ij}(t_0,x_0))_{(i,j)\in A^1\times A^2},
\sigma(t_0,x_0)\tp q^\eps_u,I^{1,\d}_{i_0j_0}(t_0,x_0,
\phi(t_0,.,y_0))+I^{2,\d}_{i_0j_0}(t_0,x_0,u^{i_0j_0}))\\
&\qq -g^{i_0j_0}(t_0,y_0,(w^{ij}(t_0,y_0))_{(i,j)\in A^1\times
A^2},\sigma(t_0,y_0)\tp
q^\eps_w),I^{1,\d}_{i_0j_0}(t_0,y_0,-\phi(t_0,
x_0,.))+I^{2,\d}_{i_0j_0}(t_0,y_0,w^{i_0j_0}))]\\
&=-[g^{i_0j_0}(t_0,x_0,(u^{ij}(t_0,x_0))_{(i,j)\in A^1\times A^2},
\sigma(t_0,x_0)\tp
q^\eps_u,I^{1,\d}_{i_0j_0}(t_0,x_0,\phi(t_0,.,y_0))+
I^{2,\d}_{i_0j_0}(t_0,x_0,u^{i_0j_0}))\\
&\qq-g^{i_0j_0}(t_0,y_0,(w^{ij}(t_0,y_0))_{(i,j)\in A^1\times
A^2},\sigma(t_0,y_0)\tp
q^\eps_w,I^{1,\d}_{i_0j_0}(t_0,x_0,\phi(t_0,.,y_0))+I^{2,\d}_{i_0j_0}(t_0,x_0,u^{i_0j_0}))]
        \\&
\qq -\Xi \times [I^{1,\d}_{i_0j_0}(t_0,x_0,
\phi(t_0,.,y_0))+I^{2,\d}_{i_0j_0}(t_0,x_0,u^{i_0j_0})-I^{1,\d}_{i_0j_0}(t_0,y_0,
-\phi(t_0, x_0,.))-I^{2,\d}_{i_0j_0}(t_0,y_0,w^{i_0j_0})]
  \end{align*}
On the other hand, once more by a linearization procedure and taking
into account of (\ref{49}), the first term of the right-hand side of
the previous equality verifies:
 \begin{align*}
&[g^{i_0j_0}(t_0,x_0,(u^{ij}(t_0,x_0))_{(i,j)\in A^1\times A^2},
\sigma(t_0,x_0)\tp
q^\eps_u,I^{1,\d}_{i_0j_0}(t_0,x_0,\phi(t_0,.,y_0))+
I^{2,\d}_{i_0j_0}(t_0,x_0,u^{i_0j_0}))\\
&\qq -g^{i_0j_0}(t_0,y_0,(w^{ij}(t_0,y_0))_{(i,j)\in A^1\times
A^2},\sigma(t_0,y_0)\tp
q^\eps_w,I^{1,\d}_{i_0j_0}(t_0,x_0,\phi(t_0,.,y_0))+I^{2,\d}_{i_0j_0}(t_0,x_0,u^{i_0j_0}))]\\&
\le  -\l (u^{i_0j_0}(t_0,x_0)- w^{i_0j_0}(t_0,y_0))+\sum_{(i,j)\neq
(i_0,j_0)}\xi^{ij}_{t_0,x_0,y_0,\d}(u^{ij}(t_0,x_0)-w^{ij}(t_0,y_0))\\&\qq
 +\eta^{i_0j_0}_{t_0,x_0,y_0,\d}(\sigma(t_0,x_0)\tp
q^\eps-\sigma(t_0,y_0)\tp q^\eps_w)+\sup_{\vec
y,z,q}|g^{i_0j_0}(t_0,x_0,\vec y,z,q)-g^{i_0j_0}(t_0,y_0,\vec
y,z,q)|
  \end{align*}
 where   $\xi^{ij}_{t_0,x_0,y_0,\d}$ is bounded non-negative quantity
 (positiveness stems from (A2)) by $C_{i_0j_0}$, and
$\eta^{ij}_{t_0,x_0,y_0,\d}$ is a bounded quantity by the Lipschitz constant
 of $g^{i_0j_0}$ w.r.t. $z$. Therefore from (\ref{eqdifference}) we
 deduce:
\be \label{eqdifferencex}
\begin{array}{ll}
&\l (u^{i_0j_0}(t_0,x_0)- w^{i_0j_0}(t_0,y_0)) \leq \\&\qq
(p^\eps_u-p^\eps_w)+[b(t_0,x_0)\tp q^\eps_u-b(t_0,y_0)\tp
q^\eps_w]+\frac{1}{2}\{Tr[\sigma(t_0, x_0)\tp
M^\eps_u\sigma(t_0, x_0)-\sigma(t_0,y_0)\tp M^\eps_w\sigma(t_0, y_0)]\}\\
& \qq+\q\Xi .[I^{1,\d}_{i_0j_0}(t_0,x_0,
\phi(t_0,.,y_0))-I^{1,\d}_{i_0j_0}(t_0,y_0, -\phi(t_0,
x_0,.))+I^{2,\d}_{i_0j_0}(t_0,x_0,u^{i_0j_0})-I^{2,\d}_{i_0j_0}(t_0,y_0,w^{i_0j_0})]\\&
\qq +\q I^{1,\delta}(t_0,x_0,\phi(t_0,.,y_0))-I^{1,\delta}(t_0,y_0,-
\phi(t_0,x_0,.)) +I^{2,\delta}(t_0,x_0,q^\eps_u,u^{i_0j_0})
-I^{2,\delta}(t_0,y_0, q^\eps_w,w^{i_0j_0})\\&\qq+\q \sum_{(i,j)\neq
(i_0,j_0)}\xi^{ij}_{t_0,x_0,y_0}(u^{ij}(t_0,x_0)-w^{ij}(t_0,y_0))+
\eta^{i_0j_0}_{t_0,x_0,y_0}(\sigma(t_0,x_0)\tp
q^\eps-\sigma(t_0,y_0)\tp q^\eps_w)\\&\qq+\q \sup_{\vec
y,z,q}|g^{i_0j_0}(t_0,x_0,\vec y,z,q)-g^{i_0j_0}(t_0,y_0,\vec
y,z,q)|.
\end{array}\ee
Next recall (\ref{estimlocal2}),
(\ref{estimlocal1}),(\ref{7141})-(ii) and (\ref{equnicite3}), then
take the limit superior as $\d \rightarrow 0$ then the limit
superior as $\rho\rightarrow 0$ to obtain: \be \label{eqdifferencey}
\begin{array}{ll}
&\l (u^{i_0j_0}(t_0,x_0)- w^{i_0j_0}(t_0,y_0)) \leq \\&\qq
(p^\eps_u-p^\eps_w)+[b(t_0,x_0)\tp q^\eps_u-b(t_0,y_0)\tp
q^\eps_w]+\frac{1}{2}\{Tr[\sigma(t_0, x_0)\tp M^\eps_u\sigma(t_0,
x_0)-\sigma(t_0,y_0)\tp M^\eps_w\sigma(t_0, y_0)]\} \\&\qq+\q
\sum_{(i,j)\neq (i_0,j_0)}C_{ij}(u^{ij}(t_0,x_0)-w^{ij}(t_0,y_0))^++
C_{i_0j_0}(\sigma(t_0,x_0)\tp q^\eps-\sigma(t_0,y_0)\tp
q^\eps_w)\\&\qq+\q \sup_{\vec y,z,q}|g^{i_0j_0}(t_0,x_0,\vec
y,z,q)-g^{i_0j_0}(t_0,y_0,\vec
y,z,q)|+|O({\eps^{-1}}{|x_0-y_0|^2})|+C{\eps}^{-1}{|x_0-y_0|^2} .
\end{array}\ee
Finally by the continuity of $g^{i_0j_0}$ (see (A1)-(i)),  using the
properties satisfied by
$p^\eps_u,p^\eps_w,q^\eps_u,q^\eps_w,M^\eps_u$ and $M^\eps_w$,
sending $\eps$ to $0$ and taking into account of
(\ref{limitx})-(\ref{limitx2}) to obtain (in a classical way) that:
\begin{align*}
\lambda(u^{i_0j_0}(t^*,x^*)-w^{i_0j_0}(t^*,x^*))\leq
\sum\limits_{(i,j)\neq(i_0,j_0)}C_{ij}(u^{ij}(t^*,x^*)-w^{ij}(t^*,x^*))^+
\end{align*} which is contradictory by the definitions of $\l$ and $(t^*,x^*)$. Thus for any $\ij$,
$u^{ij}\leq w^{ij}$. Note that we have used the fact that $u^{ij}$
(resp $ w^{ij}$) is usc (resp. lsc) when we take the limit as $\eps \rightarrow 0$ to deduce the last inequality.\ms

\no {\bf Step 2}: The general case. If $(u^{ij})_{\ij} $(resp. $(w^{ij})_{\ij}$) is a subsolution (resp. supersolution) of (\ref{SIPDE}) in the class $\Pi_g$, then for any $\l \in \R$, $(e^{-\l t}u^{ij})_{\ij}$ (resp. $(e^{-\l t}w^{ij})_{\ij}$) is a subsolution (resp. supersolution) of a system of type (\ref{SIPDE}) but associated with \\
 $\{(e^{-\l t}g^{ij}(t,x,(e^{\l t}u^{kl})_{(k,l)\in A^1\times A^2},
 e^{\l t}\sigma(t,x)^\top
 D_xu^{ij}(t,x), I_{ij}(t,x,e^{\l t}u^{ij}))-\l u^{ij}(t,x)$ \\
 $ (e^{-\l t}\underline g_{ik}(t,x))_{i,k\in A^1}, (e^{-\l t}\bar g_{jl}(t,x))_{j,l\in A^2}, (e^{-\l T}h^{ij}(x))_{\ij})\}.$ But in choosing appropriately the constant $\l$ we get that the functions $\tilde g^{ij}(t,x,\vec y,z^{ij},q):=
 e^{-\l t}g^{ij}(t,x,e^{\l t}\vec y, e^{\l t}z^{ij},q)-\l y^{ij}$ verify (\ref{49}). Therefore by the result of Step 1 we have for any $\ij$,
$e^{-\l t}u^{ij}\leq e^{-\l t}w^{ij}$, whence the desired result.
\end{proof}
As a by-product we have:
\begin{cor}\label{coruni} The system of variational inequalities with bilateral obstacles
(\ref{SIPDE}) has at most one viscosity solution in the class
$\Pi_g$ which is moreover necessarily continuous. \ms
\end{cor}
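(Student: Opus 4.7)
The plan is to deduce both continuity and uniqueness directly from the comparison principle in Theorem~3.1, applied to the lsc and usc envelopes of any viscosity solution.

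First I would address continuity. Let $\vec{v}=(v^{ij})_{\ij}$ be a viscosity solution of (\ref{SIPDE}) in $\Pi_g$. By the definition preceding the corollary, $(v^{ij}_*)_{\ij}$ is an lsc supersolution and $(v^{*,ij})_{\ij}$ is a usc subsolution of (\ref{SIPDE}), and both families still belong to $\Pi_g$ because $v^{ij}\in \Pi_g$ and polynomial growth is preserved under taking lsc/usc envelopes. From the general inequality $v^{ij}_*\leq v^{ij}\leq v^{*,ij}$ and the comparison Theorem~3.1 applied with $\vec{u}=\vec{v}^{*}$ (usc subsolution) and $\vec{w}=\vec{v}_{*}$ (lsc supersolution), we obtain $v^{*,ij}\leq v^{ij}_*$. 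Combining both inequalities yields $v^{ij}_*=v^{ij}=v^{*,ij}$ on $[0,T]\times \R^k$ for every $\ij$, which is precisely the continuity of $v^{ij}$.

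Next I would deduce uniqueness. Suppose $\vec{v}^{1}$ and $\vec{v}^{2}$ are two viscosity solutions of (\ref{SIPDE}) in $\Pi_g$. By the continuity established in the previous step, each $v^{1,ij}$ and $v^{2,ij}$ equals its own lsc and usc envelopes. Consequently $(v^{1,ij})_{\ij}$ is simultaneously a usc subsolution and an lsc supersolution, and the same holds for $(v^{2,ij})_{\ij}$. Applying Theorem~3.1 once with $\vec{u}=\vec{v}^{1}$ (as subsolution) and $\vec{w}=\vec{v}^{2}$ (as supersolution) gives $v^{1,ij}\leq v^{2,ij}$, and once with the roles reversed gives the opposite inequality. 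Hence $\vec{v}^{1}=\vec{v}^{2}$.

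No step here is technically the main obstacle: the whole content of the corollary is bundled inside the comparison theorem already proved. The only points to be careful about are (i) verifying that the lsc/usc envelopes inherit the polynomial growth bound from $\vec{v}$, which is immediate, and (ii) checking that the envelopes indeed satisfy the sub/supersolution conditions at the terminal time, which is built into Definition~2.1. Once these routine points are noted, the corollary follows in the few lines sketched above.
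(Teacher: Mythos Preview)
Your argument is correct and is exactly the standard derivation of continuity and uniqueness from a comparison principle; the paper states the corollary as an immediate by-product of Theorem~3.1 without spelling out a proof, and what you wrote is precisely the few lines one fills in.
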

Finally a remark in the case when we have only lower or upper
interconnected obstacles.
\begin{rem} \label{unicitecasunebarriere} If we assume that $\bar g_{jk}\equiv +\infty$ (resp.
$\underbar g_{il}\equiv +\infty$) for any $j,k\in A^2$ (res. $i,l\in
A^1$), then system (\ref{SIPDE}) turns into  of type with one lower
(resp. upper) interconnected obstacles. As the non free loop
property is satisfied by $(\underbar g_{il})_{i,l\in A^1}$ (resp.
$(\bar g_{jk})_{j,k\in A^2}$) then the system of variational
inequalities with lower (resp. upper) interconnected obstacles has
at most one viscosity solution in the class $\Pi_g$ which is
moreover necessarily continuous. \qed
 \end{rem}
\section{Approximating schemes and BSDEs} For $n,m\geq 0$, let
$(Y^{ij,n,m},Z^{ij,n,m},U^{ij,n,m})_{(i,j)\in A^1\times A^2}$ be
the solution of the following system of BSDEs:
\be
\label{approximation}\left\{
    \begin{array}{ll}
      (Y^{ij,n,m},Z^{ij,n,m},U^{ij,n,m})\in
      \cS^2\times\cH^2\times\cH^2(\hat{N});\\
      dY^{ij,n,m}_s=-f^{ij,n,m}(s,X^{t,x}_s,(Y^{kl,n,m}_s)_{(k,l)\in A^1\times
      A^2},Z^{ij,n,m}_s,U^{ij,n,m}_s)ds+\\ \qq\qq\qq\qq
      Z^{ij,n,m}_sdB_s+\int_E U^{ij,n,m}_s(e)\hat{N}(ds,de),~s\leq T;\\
      Y^{ij,n,m}_T=h^{ij}(X^{t,x}_T),
    \end{array}
    \right.\ee
where \begin{align*}&f^{ij,n,m}(s,X^{t,x}_s,({y}^{ij})_{(i,j)\in
A^1\times A^2},{z},{u}):=g^{ij,n,m}(s,X^{t,x}_s,({y}^{kl})_{(k,l)\in
A^1\times A^2},{z},\pin
{u}(e)\g^{ij}(X^{t,x}_s,e)n(de))\\
&\qq \qq =g^{ij}(s,X^{t,x}_s,({y}^{kl})_{(k,l)\in A^1\times
A^2},{z},\pin
{u}(e)\g^{ij}(X^{t,x}_s,e)n(de))\\
&\qq \qq \qq \qq \qq \qq +n({y}^{ij}-\max\limits_{k\in
A^1_i}\{{y}^{kj}-\underline{g}_{ik}(s,X^{t,x}_s)\})^--m({y}^{ij}-\min\limits_{l\in
A^2_j}\{{y}^{il}+\overline{g}_{jl}(s,X^{t,x}_s)\})^+.\end{align*}
Let us notice that under Assumption {\bf (A1)}, the solution
$(Y^{ij,n,m},Z^{ij,n,m},U^{ij,n,m})_{(i,j)\in A^1\times A^2}$  of
(\ref{approximation}) exists and is unique (see e.g. \cite{tang2}
or \cite{guy}). Next, let us focus on the properties of the matrix
of processes $(Y^{ij,n,m})_{\ijg}$.
%\end{document}
\begin{propo}
For any $(i,j)\in A^1\times A^2$ and $n,m\geq 0$ we have:

\noindent
(i) \be\label{monotonicity} P-a.s.,~~Y^{ij,n,m}\leq Y^{ij,n+1,m}~
and ~~Y^{ij,n,m+1}\leq Y^{ij,n,m}. \ee \noindent (ii) There exists a
deterministic continuous function $v^{ij,n,m}\in\Pi_g$ s.t., for
any $\tx$,\be\label{Feman-Kac}Y^{ij,n,m}_s=v^{ij,n,m}(s,X^{t,x}_s),~~s\in[t,T].\ee
\noindent (iii) for any $(t,x)\in [0,T]\times R^k$,
\be\label{v-monotonicity}v^{ij,n,m}(t,x)\leq v^{ij,n+1,m}(t,x)~
and ~~v^{ij,n,m+1}(t,x)\leq v^{ij,n,m}(t,x).~~~\ee
\end{propo}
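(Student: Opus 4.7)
For (i), I observe that by the very definition of the driver,
\[
f^{ij,n+1,m}(s,x,\vec y,z,u)-f^{ij,n,m}(s,x,\vec y,z,u)=(y^{ij}-\max_{k\in A^1_i}\{y^{kj}-\underline{g}_{ik}(s,x)\})^-\geq 0,
\]
so the generator of the approximating system is pointwise non-decreasing in $n$. Assumption (A2) provides monotonicity of $f^{ij,n,m}$ in the off-diagonal variables $y^{kl}$, $(k,l)\neq(i,j)$, while (A1)(iii) together with $\gamma^{ij}\geq 0$ from (\ref{condgamma}) provides the $u$-monotonicity required for comparison of BSDEs with jumps. Hence the componentwise comparison principle for multidimensional BSDEs with jumps (see e.g.\ \cite{zhao}) yields $Y^{ij,n,m}\leq Y^{ij,n+1,m}$. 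The $m$-monotonicity is symmetric, based on $f^{ij,n,m+1}-f^{ij,n,m}=-(y^{ij}-\min_{l\in A^2_j}\{y^{il}+\overline{g}_{jl}(s,x)\})^+\leq 0$.

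For (ii), I would run a Picard iteration on the coupled system: $\vec Y^{(0)}\equiv 0$ and, for $p\geq 0$, define $(Y^{ij,(p+1)},Z^{ij,(p+1)},U^{ij,(p+1)})$ as the unique solution of the scalar BSDE with terminal value $h^{ij}(X^{t,x}_T)$ and driver $f^{ij,n,m}(s,X^{t,x}_s,\vec Y^{(p)}_s,\cdot,\cdot)$. An induction argument using the non-linear Feynman-Kac representation for scalar BSDEs with jumps driven by the Markov process $X^{t,x}$ (cf.\ \cite{zhao}) furnishes continuous functions $v^{ij,(p)}\in\Pi_g$ with $Y^{ij,(p)}_s=v^{ij,(p)}(s,X^{t,x}_s)$. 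The uniform Lipschitz character of $f^{ij,n,m}$ turns the iteration into a contraction on $\cS^2\times\cH^2\times\cH^2(\hat N)$ over a short time interval; patching finitely many such intervals gives global convergence $(Y^{(p)},Z^{(p)},U^{(p)})\to (Y^{n,m},Z^{n,m},U^{n,m})$. Uniform-in-$p$ a priori estimates combined with continuous dependence of BSDEs with jumps on their parameters transfer continuity and polynomial growth to the limit $v^{ij,n,m}$. Assertion (iii) then follows immediately by taking $s=t$ in (\ref{Feman-Kac}) and invoking (i): $v^{ij,n,m}(t,x)=Y^{ij,n,m}_t\leq Y^{ij,n+1,m}_t=v^{ij,n+1,m}(t,x)$, and likewise $v^{ij,n,m+1}\leq v^{ij,n,m}$.

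The main obstacle is step (ii): propagating continuity and polynomial growth of $v^{ij,n,m}$ through the Picard limit in the jump-diffusion setting. The right ingredients are uniform a priori $\cS^2$-estimates and the continuous dependence of scalar BSDEs with jumps on the driver and on the initial data of the forward SDE (\ref{cmSDE}); (A0), (A1) and (\ref{condgamma}) are precisely the hypotheses needed, but the estimates must be assembled carefully in order to pass them to the coupled limit.
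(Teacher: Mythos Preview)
Your argument for (i) and (iii) is correct and matches the paper's approach: both rely on a componentwise comparison theorem for multidimensional BSDEs with jumps, using the pointwise ordering of the drivers in $n$ (resp.\ $m$), the off-diagonal monotonicity from (A2), and the structure in $u$ coming from (A1)(iii) together with $\gamma^{ij}\geq 0$. The paper invokes Zhu's comparison result \cite{xuehong} and spells out the verification of its technical hypothesis (your ``$u$-monotonicity'' condition) in some detail, but the substance is the same.

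For (ii) your route diverges from the paper. You propose to build the deterministic representation by hand via Picard iteration on the coupled system, propagating continuity and polynomial growth through the limit, and you flag this as the main obstacle. The paper does none of this: it simply observes that the system \eqref{approximation} is a standard multidimensional BSDE with jumps and Lipschitz driver in the Markovian framework, and invokes the Feynman--Kac representation already established in Barles--Buckdahn--Pardoux \cite{guy}. So your concern about ``assembling the estimates carefully'' is unnecessary; the representation $Y^{ij,n,m}_s=v^{ij,n,m}(s,X^{t,x}_s)$ with $v^{ij,n,m}$ continuous and in $\Pi_g$ is an off-the-shelf result. Your Picard construction would certainly work, but it re-derives what is already in the literature.
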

\begin{proof} (i) Let $n$ and $m$ be fixed. We are going to use a result by X.Zhu (\cite{xuehong},
Theorem 3.1) related to comparison of solutions of multi-dimensional
BSDEs with jumps. It is enough to show that for any $t\in [0,T],
(y_{ij})_{\ijg}$, $(\overline{y}_{ij})_{\ijg}\in \R^{m_1\times
m_2}$, $(z_{ij})_{\ijg}$, $ (\overline{z}_{ij})_{\ijg}\in
(\R^d)^{m_1\times m_2}$ and $(u_{ij})_{\ijg},
(\overline{u}_{ij})_{\ijg}\in
(\cL^2_\R(E,\mathcal{B}_E,n)^{m_1\times m_2}$, there exists a
constant $C$ such
that:\begin{align*}&-4\sum\limits_{\ijg}y_{ij}^-\{f^{ij,n+1,m}(s,
X^{t,x}_s,
(y_{kl}^++\overline{y}_{kl})_{\klg},z_{ij},u_{ij})-f^{ij,n,m}(s,
X^{t,x}_s, (\overline{y}_{kl})_{\klg},\overline{z}_{ij},\overline{u}_{ij})\}\\
& \leq
2\sum\limits_{\ijg}\mathbbm{1}_{\{y^{ij}<0\}}|z_{ij}-\overline{z}_{ij}|^2+C\sum\limits_{(i,j)\in
A^1\times A^2}(y^-_{ij})^2\\
&\qq \qq+2\sum\limits_{\ijg}\int_E\mathbbm{1}_{\{y^{ij}\geq 0\}}|(y_{ij}+u_{ij}(e)-\overline{u}_{ij}(e))^-|^2n(de)\\
&\qq
\qq+2\sum\limits_{\ijg}\int_E\mathbbm{1}_{\{y^{ij}<0\}}[|(y_{ij}+u_{ij}(e)-
\overline{u}_{ij}(e))^-|^2-|y_{ij}^-|^2-2y_{ij}(u_{ij}(e)-\overline{u}_{ij}(e))]n(de)
\end{align*}
where for any $x\in \R$, $x^+=x\vee 0$ and $x^-=(-x)\vee 0$. But the
above inequality follows from the facts that for any $\ijg$:\\
a) $f^{ij,n,m}\leq f^{ij,n+1,m}$ ;\\
b) For any $(\theta_{kl})_{\klg}\in \R^{m_1\times m_2}$ such that
$\theta_{kl}\geq 0$ if $(k,l)\in \G^{-(i,j)})$ and $\theta_{ij}=0$,
$$ f^{ij,n,m}(s,X^{t,x}_s,(y_{kl}+\theta_{kl})_{\klg},
z_{ij},u_{ij})\geq
f^{ij,n,m}(s,X^{t,x}_s,(y_{kl})_{\klg},z_{ij},u_{ij}).$$ c) $f^{ij}$
depends only on $z_{ij}, u_{ij}$ and not on the other components
$z_{kl}, u_{kl},(k,l)\neq (i,j)$.\\
d)\begin{align*}&-4y_{ij}^-(f^{ij,n,m}(s, X^{t,x}_s,
(\overline{y}_{kl})_{\klg},\overline{z}_{ij},u_{ij})-f^{ij,n,m}(s,
X^{t,x}_s, (\overline{y}_{kl})_{\klg},\overline{z}_{ij},\overline{u}_{ij}))\\
&\qq \leq C (y^-_{ij})^2+2\int_E\mathbbm{1}_{\{y_{ij}\geq 0\}}|(y_{ij}+u_{ij}(e)-\overline{u}_{ij}(e))^-|^2n(de)\\
&\qq \qq
+2\int_E\mathbbm{1}_{\{y_{ij}<0\}}[|(y_{ij}+u_{ij}(e)-\overline{u}_{ij}(e))^-|^2-|y_{ij}^-
|^2-2y_{ij}(u_{ij}(e)-\overline{u}_{ij}(e))]n(de)
\end{align*}
Indeed a), b) and c) are easy to check. We just need to prove d). In
the case when $y_{ij}\geq 0$, this inequality is obvious. Next let
us focus on the case when $y_{ij}<0$. First note that by a
linearization procedure we have:
\begin{align*}&-4y_{ij}^-(f^{ij,n,m}(s, X^{t,x}_s,
(\overline{y}_{kl})_{\klg},\overline{z}_{ij},u_{ij})-f^{ij,n,m}(s,
X^{t,x}_s,
(\overline{y}_{kl})_{\klg},\overline{z}_{ij},\overline{u}_{ij}))\\
&=-4y_{ij}^-(g^{ij}(s, X^{t,x}_s,
(\overline{y}_{kl})_{\klg},\overline{z}_{ij},\int_E\gamma^{ij}(X^{t,x}_s,e){u}_{ij}(e)n(de)
)\\&\qq\qq\qq-g^{ij}(s, X^{t,x}_s,
(\overline{y}_{kl})_{\klg},\overline{z}_{ij},\int_E\gamma^{ij}(X^{t,x}_s,e)\overline{u}_{ij}(e)n(de)
)) \\&=4y_{ij}^-\times  \Xi_1\times
\int_E\gamma^{ij}(X^{t,x}_s,e)(\overline{u}_{ij}(e)-{u}_{ij}(e))n(de)
\end{align*}
where $\Xi_1$ is a non-negative quantity (since $g^{ij}$ is
nondecreasing in $q$) and bounded by the Lipschitz constant of
$g^{ij}$ w.r.t. $q$ and which depends on the other variables. Now
\begin{align*}
 &4y_{ij}^-\times \Xi_1\times
\pin\gamma^{ij}(X^{t,x}_s,e)(\overline{u}_{ij}(e)-{u}_{ij}(e))n(de)
\\&
=4y_{ij}^-\times \Xi_1\times \int_{{u}_{ij}-\overline{u}_{ij}<
-y_{ij}}\gamma^{ij}(X^{t,x}_s,e)(\overline{u}_{ij}(e)-{u}_{ij}(e))n(de)
\\&
\qq\qq +4y_{ij}^-\times \Xi_1\times
\int_{{u}_{ij}-\overline{u}_{ij}\ge
-y_{ij}}\gamma^{ij}(X^{t,x}_s,e)(\overline{u}_{ij}(e)-{u}_{ij}(e))n(de)
\\&\leq 4y_{ij}^-\times \Xi_1\times
\int_{{u}_{ij}-\overline{u}_{ij}<
-y_{ij}}\gamma^{ij}(X^{t,x}_s,e)(\overline{u}_{ij}(e)-{u}_{ij}(e))n(de).
\end{align*}
But for an appropriate constant $C$,
$$\gamma^{ij}(X^{t,x}_s,e)(\overline{u}_{ij}(e)-{u}_{ij}(e))\leq C
(\gamma^{ij}(X^{t,x}_s,e))^2+2(\overline{u}_{ij}(e)-{u}_{ij}(e))^2$$
then
$$\begin{array}{l}
4y_{ij}^-\times \Xi_1\times
\int_E\gamma^{ij}(X^{t,x}_s,e)(\overline{u}_{ij}(e)-{u}_{ij}(e))n(de)\\
\leq C (y^-_{ij})^2 +2\int_{{u}_{ij}-\overline{u}_{ij}<
-y_{ij}}\mathbbm{1}_{\{y^{ij}<0\}}[|(y_{ij}+u_{ij}(e)-
\overline{u}_{ij}(e))^-|^2-|y_{ij}^-|^2-2y_{ij}(u_{ij}(e)-\overline{u}_{ij}(e))]n(de)\\
\leq C (y^-_{ij})^2
+2\int_{E}\mathbbm{1}_{\{y^{ij}<0\}}[|(y_{ij}+u_{ij}(e)-
\overline{u}_{ij}(e))^-|^2-|y_{ij}^-|^2-2y_{ij}(u_{ij}(e)-\overline{u}_{ij}(e))]n(de)
\end{array}$$
which is the claim. Thus $ P-a.s.,~Y^{ij,n,m}\leq Y^{ij,n+1,m}.$ In
the same way we have also $ P-a.s.,~Y^{ij,n,m+1}\leq Y^{ij,n,m}$.\ms

The second claim is just the representation of solutions of standard
BSDEs with jumps by deterministic functions in the Markovian
framework (see \cite{guy}). The inequalities of
(\ref{v-monotonicity}) are obtained by taking $s=t$ in
(\ref{monotonicity}) in view of the representation (\ref{Feman-Kac})
of $Y^{ij,n,m}$ by $v^{ij,n,m}$ and $X^{t,x}$.
\end{proof}
\begin{rem}\label{eqvijnm}For any $\ijg$, $v^{ij,n,m}$ is the unique viscosity
solution in $\Pi_g$ of the following  integral-partial differential
equation:  \be \label{eqvijnm2}\left\{
    \begin{array}{ll}
      -\partial_tv^{ij,n,m}(t,x)-\cL
      v^{ij,n,m}(t,x)-n(v^{ij,n,m}(t,x)-\max\limits_{k\in
      A^1_i}(v^{kj,n,m}(t,x)-\underline{g}_{ik}(t,x))^-\\
      \qq +m(v^{ij,n,m}(t,x)-\min \limits_{l\in
      A^2_j}(v^{il,n,m}(t,x)+\bar {g}_{jl}(t,x))^+\\\qq-g^{ij}(t,x,(v^{kl,n,m}(t,x))_{\klg},\sigma(t,x)\tp
      D_x v^{ij,n,m}(t,x),I_{ij}(t,x,v^{ij,n,m}))=0\,;\\\\\
      v^{ij,n,m}(T,x)=h^{ij}(x).
    \end{array}
    \right.\ee
For more details one can see \cite{guy}.
\end{rem}
We now suggest two approximation schemes obtained from the sequence
$((Y^{ij,n,m})_{\ijg})_{n,m}$ of the solution of system
(\ref{approximation}). The first scheme is a sequence of decreasing
reflected BSDEs with interconnected lower obstacles defined as:
$\forall (i,j)\in A^1\times A^2$, \be \label{decreasing}\left\{
    \begin{array}{ll}
      (\bar{Y}^{ij,m},\bar{Z}^{ij,m},\bar{U}^{ij,m},\bar{K}^{ij,m})\in \cS^2\times\cH^2\times\cH^2(
      \hat{N})\times\cA^2;\\\\\
      \bar{Y}^{ij,m}_s=h^{ij}(X^{t,x}_T)+\int^T_s\bar{f}^{ij,m}(r,X^{t,x}_r,(\bar{Y}^{kl,m}_r)_{\klg},\bar{Z}^{ij,m}_r,\bar{U}^{ij,m}_r)dr-\int^T_s\bar{Z}^{ij,m}_r
      dB_r\\\qq\qq\qq
      -\int^T_s\int_E \bar{U}^{ij,m}_r(e)\hat{N}(dr,de)+\bar{K}^{ij,m}_T-\bar{K}^{ij,m}_s,~~s\leq T;\\\\\
      \bar{Y}^{ij,m}_s\geq \max\limits_{k\in
      A^1_i}\{\bar{Y}^{kj,m}_s-\underline{g}_{ik}(s,X^{t,x}_s)\},~~s\leq
      T;\\\\
      \int^T_0(\bar{Y}^{ij,m}_s- \max\limits_{k\in
      A^1_i}\{\bar{Y}^{kj,m}_s-\underline{g}_{ik}(s,X^{t,x}_s)\})d\bar{K}^{ij,m}_s=0,
    \end{array}
    \right.\ee
where for any $\ijg$, $m\geq 0$ and $(s,\vec{y},z,u)$ ($u\in
{\cL}_\R^2(E,{\cal B}_E,n)$), \be \label{eqbarf}
\begin{array}{ll}\bar{f}^{ij,m}(s,X^{t,x}_s,\overrightarrow{y},z,u)&:=g^{ij,+,m}(s,X^{t,x}_s,(y^{kl})_{(k,l)\in
A^1\times A^2},
z,\int_Eu(e)\gamma^{ij}(X^{t,x}_s,e)n(de))\\\\
&:=g^{ij}(s,X^{t,x}_s,(y^{kl})_{(k,l)\in A^1\times A^2},
z,\int_Eu(e)\gamma^{ij}(X^{t,x}_s,e)n(de))\\
&\qq\qq\qq\qq\qq\qq \qq\qq -m(y^{ij}-\min\limits_{l\in
A^2_j}(y^{il}+\overline{g}_{jl}(s,X^{t,x}_s)))^+.\end{array}\ee  The
following result is related to existence and uniqueness of the
solution of (\ref{decreasing}) and some of its properties.
\begin{propo} \label{lienedpbsde}${}$
\ms \noindent i) For any fixed $m\geq 0$, the solution
$(\bar{Y}^{ij,m},\bar{Z}^{ij,m},\bar{U}^{ij,m},\bar{K}^{ij,m})_{\ijg}$
of the system (\ref{decreasing}) exists and is unique. Moreover for
any $(i,j)$
and $m\geq 0$, we have:\\
\be\label{nlimit}\lim\limits_{n\rightarrow\infty}\E[\sup\limits_{t\leq
s\leq T}|Y^{ij,n,m}_s-\bar{Y}^{ij,m}_s|^2]\rightarrow 0 \mbox{ and
}P -a.s.,~~\bar{Y}^{ij,m}\geq \bar{Y}^{ij,m+1}.\ee (ii) There exists
a unique $m_1\times m_2$-uplet of deterministic continuous functions
$(\bar{u}^{kl,m})_{(k,l)\in A^1\times A^2}$ in $\Pi_g$ such that,
for every $t\leq T$, \be\label{mFeyman-Kac}
\bar{Y}^{ij,m}_s=\bar{u}^{ij,m}(s,X^{t,x}_s),~s\in [t,T].\ee
Moreover, for any $\ijg$ and $(t,x)\in [0,T]\times \R^k$,
$\bar{u}^{ij,m}(t,x)\geq \bar{u}^{ij,m+1}(t,x)$. Finally,
$(\bar{u}^{ij,m})_{(i,j)\in A^1\times A^2}$ is a
unique viscosity solution in the class $\Pi_g$ of the following system of
 variational inequalities with inter-connected obstacles: $\forall
\ijg$,\be
\label{mSIPDE}\left\{
    \begin{array}{ll}
      min\{\bar{u}^{ij,m}(t,x)-\max\limits_{k\in
      A^1_i}(\bar{u}^{kj,m}(t,x)-\underline{g}_{ik}(t,x));-\partial_t\bar{u}^{ij,m}(t,x)-\cL
      \bar{u}^{ij,m}(t,x)\\\qq\qq
      -g^{ij,+,m}(t,x,
      (\bar{u}^{kl,m}(t,x))_{\klg},\sigma(t,x)\tp D_x\bar{u}^{ij,m}(t,x),
      I_{ij}(t,x,\bar{u}^{ij,m})\}=0;\\
      \bar{u}^{ij,m}(T,x)=h^{ij}(x).
    \end{array}
    \right.\ee
\end{propo}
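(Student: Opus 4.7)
The plan is to build the solution of (\ref{decreasing}) as the monotone $n\to\infty$ limit of the doubly penalized system (\ref{approximation}), then read off the Feynman--Kac representation from (\ref{Feman-Kac}) and use the stability of viscosity solutions to identify $\bar u^{ij,m}$ as a viscosity solution of (\ref{mSIPDE}), with uniqueness handled via Remark \ref{unicitecasunebarriere}.

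For part (i), I would first derive uniform (in $n$) $\cS^2\times\cH^2\times\cH^2(\hat N)$ bounds for $(Y^{ij,n,m},Z^{ij,n,m},U^{ij,n,m})_{\ijg}$. Applying It\^o's formula to $|Y^{ij,n,m}|^2$, together with (A1) and the polynomial growth of $X^{t,x}$ and $h^{ij}$, yields such bounds plus the key estimate
\begin{equation*}
\E\int_0^T n\bigl(Y^{ij,n,m}_s-\max_{k\in A^1_i}\{Y^{kj,n,m}_s-\underline{g}_{ik}(s,X^{t,x}_s)\}\bigr)^-\,ds\leq C
\end{equation*}
uniformly in $n$; the $-m(\cdot)^+$ term in $f^{ij,n,m}$ is harmless at this stage since $m$ is fixed. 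Since $Y^{ij,n,m}$ is non-decreasing in $n$ by (\ref{monotonicity}), Fatou's lemma provides a pointwise limit $\bar Y^{ij,m}$ satisfying the lower obstacle constraint, and the non-decreasing processes
\begin{equation*}
\bar K^{ij,n,m}_s:=\int_0^s n\bigl(Y^{ij,n,m}_r-\max_{k\in A^1_i}\{Y^{kj,n,m}_r-\underline{g}_{ik}(r,X^{t,x}_r)\}\bigr)^-\,dr
\end{equation*}
admit a limit $\bar K^{ij,m}\in\cA^2$. A Cauchy argument on $Y^{ij,n,m}-Y^{ij,n',m}$ for $n\leq n'$, via It\^o's formula, promotes the convergence to $\cS^2\times\cH^2\times\cH^2(\hat N)$ and produces the limits $\bar Z^{ij,m}$, $\bar U^{ij,m}$; identifying the integral equation in (\ref{decreasing}) is then immediate, and the Skorokhod condition follows from weak convergence of $d\bar K^{ij,n,m}$ combined with uniform convergence on compacts of the obstacle (cf.\ the argument in \cite{ibtissam}). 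Uniqueness of the solution of (\ref{decreasing}) is obtained by a standard It\^o argument on the difference of two candidate solutions, using the Skorokhod condition and assumption (A2). The statement (\ref{nlimit}) is then a by-product, and $\bar Y^{ij,m}\geq\bar Y^{ij,m+1}$ is inherited from (\ref{monotonicity}) by passing to the limit in $n$.

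For (ii), the Markov property of $X^{t,x}$ together with (\ref{Feman-Kac}) shows that setting
\begin{equation*}
\bar u^{ij,m}(t,x):=\lim_{n\to\infty}v^{ij,n,m}(t,x)
\end{equation*}
yields the Feynman--Kac representation $\bar Y^{ij,m}_s=\bar u^{ij,m}(s,X^{t,x}_s)$, and the uniform polynomial growth of the $v^{ij,n,m}$ transfers to $\bar u^{ij,m}\in\Pi_g$. Monotonicity in $m$ passes from (\ref{v-monotonicity}). Each $v^{ij,n,m}$ being a viscosity solution of (\ref{eqvijnm2}), the half-relaxed limit argument of Barles--Perthame, adapted to the non-local setting of Barles--Imbert \cite{barlesimbert}, shows that $((\bar u^{ij,m})_*)_{\ijg}$ (resp.\ $((\bar u^{ij,m})^*)_{\ijg}$) is a viscosity supersolution (resp.\ subsolution) of (\ref{mSIPDE}): the penalization term $n(\cdot)^-$ disappears in the limit and the obstacle constraint is absorbed into the $\min$. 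Continuity of $\bar u^{ij,m}$ and uniqueness in $\Pi_g$ then follow from the comparison result of Remark \ref{unicitecasunebarriere} applied to (\ref{mSIPDE}), which is of interconnected lower-obstacle type only.

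The main obstacle is the passage to the limit in the jump component and the correct identification of the Skorokhod condition: comparison and monotone convergence for multidimensional BSDEs with jumps are delicate (as already witnessed by the use of Zhu's criterion in the proof of the previous proposition), and the interaction between the penalization measures and the jump integrals requires care. A second subtlety is ensuring stability of viscosity solutions for the non-local equation under the monotone $n$-limit within the Barles--Imbert formalism, which is precisely the reason the appendix of the paper introduces an alternative ``local maxima'' formulation.
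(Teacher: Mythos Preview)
Your overall strategy---monotone $n$-limit of the penalized system, Feynman--Kac, then stability of viscosity solutions---is the same as the paper's. However, two genuine gaps remain, and one tool is different.

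\textbf{Continuity of $\bar K^{ij,m}$.} You assert that the limit of the penalization processes lies in $\cA^2$, i.e.\ is \emph{continuous}. This is not automatic: a monotone-limit argument in the jump setting (the paper invokes Essaky's monotonic limit theorem \cite{essaky} rather than a Cauchy argument) only yields a predictable RCLL non-decreasing process in the limit. Ruling out jumps of $\bar K^{ij,m}$ is precisely where the non-free-loop assumption (A3) on the switching costs enters, and the paper singles this out as ``the remaining of the proof'', referring to \cite{zhao}, Theorem~3.2. Without (A3) the limiting reflected BSDE could have a discontinuous increasing process, and the system (\ref{decreasing}) as stated (with $\bar K^{ij,m}\in\cA^2$) need not hold. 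Your proposal never invokes (A3) and does not address this point.

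\textbf{Uniqueness.} A ``standard It\^o argument on the difference of two candidate solutions'' does not go through for \emph{systems} of reflected BSDEs with interconnected obstacles: the obstacle of the $(i,j)$-th equation depends on the other components $\bar Y^{kj,m}$, so the Skorokhod conditions do not combine to kill the cross terms as they would in a one-dimensional reflected BSDE. The paper obtains uniqueness by representing the solution as the value function of an optimal switching problem (as in \cite{hamadenemorlais, eliekharroubichassegneux}), which is possible here because the assumptions on $g^{ij}$ permit comparison in the Brownian--Poisson framework.

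\textbf{Methodological difference.} Where you propose direct a priori estimates plus a Cauchy argument, the paper first constructs an explicit $\cS^2$ upper bound $\bar Y$ (via a one-dimensional BSDE with generator $F=\sum_{ij}|f^{ij}|$ and a Picard iteration), then applies the monotonic limit theorem of \cite{essaky} to pass to the limit; $\cS^2$-convergence in (\ref{nlimit}) is then deduced a posteriori from Dini's theorem for RCLL processes, once continuity of $\bar K^{ij,m}$ is known. Your route could in principle be made to work, but the Cauchy estimate on $Y^{ij,n,m}-Y^{ij,n',m}$ is awkward because the drivers differ exactly by the penalization terms, whose sign you cannot control without already knowing the obstacle is attained.
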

\begin{proof}(i) It is enough to consider the case $m=0$, since for
any $\ij$, the function $$(s,x,(y^{kl})_{(kl)\in A^1\times
A^2})\rightarrow -m(y^{ij}-\min\limits_{l\in
A^2_j}(y^{il}+\overline{g}_{jl}(s,x)))^+$$ has the same properties
as $f^{ij}$ displayed in ({\bf A1}) and ({\bf A2}). First and
w.l.o.g we may assume that $f^{ij}$ is non-decreasing w.r.t.
$y^{kl}$, for any $\klg$, since thanks to assumption
(A2), it is enough to multiply the solution by $e^{\lambda t}$,
where $\lambda$ is appropriately chosen in order to fall in this
latter case, since $f^{ij}$ is Lipschitz  w.r.t. the component
$y^{ij}$.
Now let $H$ (resp. $F$) be the following functions:
$$
  H(x)=\sum_{\ijg}|h^{ij}(x)| \mbox{ and
  }F(t,x,y,z,u)=\sum_{\ijg}|f^{ij}(t,x,yI_{m_1,m_2},z,u)|
$$where $(y,z,u)\in \R^{1+d}\times \cL^2_\R(E,\mathcal{B}_E,n)$ and $I_{m_1,m_2}$ is the
matrix of $m_1$ (resp. $m_2$) rows (resp. columns) with entries
equal to 1. Let $(\bar Y,\bar Z,\bar U)$ be the solution of the
following one-dimensional BSDE with jumps associated with
$(F(s,X^{t,x}_s,y,z,u),H(\xt_T))$. Next let $n$ be fixed and let us
define recursively the sequence $(\tilde{Y}^{k,ij,n})_{k\geq 0}$ as
follows: for $k=0$ and any $(i,j)\in A_1\times A_2$, we set
$\tilde{Y}^{0,ij,n}:=-\bar Y$. For $k\geq 1$, we define
$(\tilde{Y}^{k,ij,n},{Z}^{k,ij,n},{U}^{k,ij,n})\in
\cS^2\times\cH^2\times\cH^2(\hat{N})$ as the solution of the
following system of BSDEs: $\forall(i,j)\in A_1\times A_2$, \be
\label{310}\left\{
    \begin{array}{ll}
      -d\tilde{Y}^{k,ij,n}_s=f^{ij}(s,X^{t,x}_s,(\tilde{Y}^{k-1,pq,n}_s)_{(p,q)\in A^1\times A^2},\tilde{Z}^{k,ij,n}_s,\tilde{U}^{k,ij,n}_s)ds\\
     \qq\qq +n(\tilde{Y}^{k,ij,n}_s-
     \max\limits_{l\in A^1_i}(\tilde{Y}^{k-1,lj,n}_s-
     \underline{g}_{il}(s,X^{t,x}_s)))^-ds -\tilde{Z}^{k,ij,n}_sdB_s-
     \int_E \tilde{U}^{k,ij,n}_s(e)\hat{N}(ds,de),\,\, s\leq T;\\
      \tilde{Y}^{k,ij,n}_T=h^{ij}(X^{t,x}_T).
    \end{array}
    \right.\ee
The solution of \eqref{310} exists and is unique since it is a decoupled multi-dimensional standard BSDE with a Lipschiz coefficient, noting that $(\tilde{Y}^{k-1,pq,n}_s)_{(p,q)\in A^1\times A^2}$ is already given. Since $n$ is fixed and the coefficient $\phi^{ij,n}$ defined by:
$$\phi^{ij,n}(s,\omega,({y}^{pq})_{(p,q)\in A^1\times A^2},z^{ij},u^{ij}):=f^{ij}(s,X^{t,x}_s(\omega),({y}^{pq})_{(p,q)\in A^1\times A^2},z^{ij},u^{ij})
     +n(y^{ij}-\max\limits_{l\in A^1_i}(y^{lj}-\underline{g}_{il}(s,X^{t,x}_s)))^-$$
is Lipschitz w.r.t. $(({y}^{pq})_{(p,q)\in A^1\times
A^2},z^{ij},u^{ij})$, the sequence $(\tilde{Y}^{k,ij,n})_{k\geq 0}$
converges in $\cS^2$ to $Y^{ij,n,0}$ as $k\rightarrow\infty$, for
any $i,j$ and $n$. Finally by comparison and an induction argument,
used twice (with $n$ and then with $k$), we obtain:
$$
  \tilde{Y}^{k,ij,n}\leq \tilde{Y}^{k,ij,n+1} \mbox{ and }\tilde{Y}^{k,ij,n}\leq  \bar Y.
$$Note that for the second inequality, we take into account of the fact that
$n(\bar Y_s-\max\limits_{l\in A^1_i}(\bar
Y_s-\underline{g}_{il}(s,X^{t,x}_s)))^-\equiv 0$ since
$\underline g_{il}\geq 0$. Take now the limit w.r.t. $k$ in the
previous inequalities to obtain:
\be\label{estimyijno}
{Y}^{ij,n,0}\leq {Y}^{ij,n+1,0} \mbox{ and }{Y}^{ij,n,0}\leq \bar Y.
\ee
Therefore there exists a progressively measurable process $\bar Y^{ij,0}$ such that
$$Y^{ij,n,0}\nearrow\bar Y^{ij,0} \mbox{ and }\bar Y^{ij,0}\leq \bar Y.$$Now using the
monotonic limit theorem by E.Essaky (\cite{essaky}, Theorem 3.1) there exist
$(\bar Z^{ij,0},\bar U^{ij,0})\in \cH^2 \times\cH^2(\hat{N})$ and
$\bar K^{ij,0}\in {\cal S}^2$ non-decreasing such that:\\
(a) $\bar Y^{ij,0}$ belongs to ${\cal S}^2$ and for any stopping
time $\tau$, $\lim\limits_{n\rightarrow\infty}\nearrow
Y^{ij,n,0}_\tau=\bar Y^{ij,0}_\tau$.\\
(b) $\bar K^{ij,0}$ is predictable RCLL non-decreasing, $\bar
K^{ij,0}_0=0$ and for any stopping time $\tau$, the sequence
$(K^{ij,n,0}_\tau)_{n\geq 0}$ converge weakly in $L^2(P)$ to $\bar
K^{ij,0}_\tau$ ; \\(c) For any
$p\in[1,2)$,$$\lim\limits_{n\rightarrow\infty}\E[\int^T_0|Z^{ij,n,0}_s-\bar
Z^{ij,0}|^p ds]=0,
~~\lim\limits_{n\rightarrow\infty}\E[\int^T_0\int_E|U^{ij,n,0}_s-\bar
U^{ij,0}|^{\frac{p}{2}} n(de)ds]=0;$$ (d) Moreover for any $\ijg$
and $s\leq T$ we have:
 \be
\label{LimitRBSDE}\left\{
\begin{array}{ll}
      \bar Y^{ij,0}_s=h^{ij}(X^{t,x}_T)+\int^T_s
      f^{ij}(r,X^{t,x}_r,(\bar Y^{kl,0}_r)_{\klg},\bar Z^{ij,0}_r,\bar U^{ij,0}_r)dr+\bar K^{ij,0}_T-\bar K^{ij,0}_s\\
      \qq\qq\qq\qq\qq\qq\qq\qq\qq-\int^T_s\bar Z^{ij,0}_r
      dB_r-\int^T_s\int_E \bar U^{ij,0}_r(e) n(de)dr \,\,;\\
      \bar Y^{ij,0}_s\geq \max\limits_{k\in
      A^1_i}\{\bar Y^{kj,0}_{s}-\underline{g}_{ik}(s,X^{t,x}_{s})\};\\
      \int_0^T(\bar Y^{ij,0}_{s-}- \max\limits_{k\in
      A^1_i}\{\bar Y^{kj,0}_{s-}-\underline{g}_{ik}({s},X^{t,x}_{s-}))d\bar
      K^{ij,0}_s=0.      
   \end{array}
    \right.\ee
The remaining of the proof is the same as the one of Theorem 3.2 in
\cite{zhao}, pp.1623, i.e.,  to show that the predictable process
$\bar K^{ij,0}$ is continuous thanks to the non free loop property
({\bf A4}). Thus $(\bar Y^{ij,0},\bar Z^{ij,0},\bar U^{ij,0},\bar
K^{ij,0})$ is a solution of (\ref{decreasing}) with $m=0$.\\
Uniqueness of the solution of (\ref{decreasing}) is obtained in the same way as in
(\cite{hamadenemorlais}, pp.193) or
(\cite{eliekharroubichassegneux}, pp.122) in making the connection
between the solutions of systems of type (\ref{decreasing}) and the
value function of the related optimal switching problem. This is possible since the hypotheses on the data allow for comparison in this framework of Brownian-Poisson noise type (especially (A1)). \\ Finally the
last property of convergence stems from the following facts: (i)
$Y^{ij,n,0}\nearrow _n \bar Y^{ij,0}$; (ii)
$Y_-^{ij,n,0}\nearrow_n\bar Y^{ij,0}_-$; (iii) A weak version of
Dini's theorem for RCLL process (see \cite{dlm}, pp.202). Note that
property (ii) is a consequence of continuity of $\bar K^{ij,0}$
which implies that the predictable projection of $\bar Y^{ij,0}$ is
nothing but $\bar Y^{ij,0}_-$ and the same holds for $Y^{ij,n,0}$.\ms

\no (ii) By (\ref{Feman-Kac}), (\ref{v-monotonicity}) and
(\ref{estimyijno}), we obtain that the sequence of functions $(v^{ij,n,0})_{n\geq 0}$ is convergent for any $\ijg$. So let us set $\bar{u}^{ij,0}(t,x):=\lim_n\nearrow v^{ij,n,0}(t,x)$. Therefore by (\ref{Feman-Kac}) and (\ref{nlimit}), the relation (\ref{mFeyman-Kac}) holds true.

Next as previously mentionned, we can obtain the same results for arbitrary $m$ and not only for $m=0$. Therefore we define $\bar u^{ij,m}(t,x):=\lim_n v^{ij,n,m}(t,x)$. Those functions verify (\ref{mFeyman-Kac}) and
$\bar u^{ij,m}\geq  \bar u^{ij,m+1}$ by  (\ref{v-monotonicity}). Next $(\bar{u}^{ij,m})_{\ij}$ is a viscosity solution of (\ref{mSIPDE})
since $(v^{ij,n,m})_{\ijg}$ is solution of (\ref{eqvijnm}) and by
arguing as in (\cite{zhao}, Theorem 4.1). Finally uniqueness in the
class $\Pi_g$ and continuity holds true by Remark
\ref{unicitecasunebarriere}.
\end{proof}
We now consider the increasing approximating scheme: $\forall \ijg$, \be\label{increasing}\left\{
    \begin{array}{ll}
      (\underline{Y}^{ij,n},\underline{Z}^{ij,n},\underline{U}^{ij,n},\underline{K}^{ij,n})\in \cS^2\times\cH^2\times\cH^2
      (\hat{N})\times\cA^2;\\\\\
      \underline{Y}^{ij,n}_s=h^{ij}(X^{t,x}_T)+\int^T_s\underline{f}^{ij,n}(r,X^{t,x}_r,(\underline{Y}^{k,l,n}_r)_{\klg},\underline{Z}^{ij,n}_r,\underline{U}^{ij,n}_r)dr-\int^T_s\underline{Z}^{ij,n}_rdB_r\\\
      \qq\qq -\int^T_s\int_E \underline{U}^{ij,n}_r(e)\hat{N}(dr,de)-(
      \underline{K}^{ij,n}_T-\underline{K}^{ij,n}_s),~~s\leq T;\\\\\
      \underline{Y}^{ij,n}_s\leq \min\limits_{l\in
      A^2_j}\{\underline{Y}^{il,n}_s+\overline{g}_{jl}(s,X^{t,x}_s)\},~~s\leq
      T,\\\\\
      \int^T_0(\underline{Y}^{ij,n}_s- \min\limits_{l\in
      A^2_j}\{\underline{Y}^{kj,n}_s+\overline{g}_{jl}(s,X^{t,x}_s)\})d\underline{K}^{ij,n}_s=0,
    \end{array}
    \right.\ee
where for any $\ijg$, $n\geq 0$ and
$(s,\vec{y},z,u)$,
\begin{align*}\underline{f}^{ij,n}(s,X^{t,x}_s,\vec{y},z,u):=&g^{ij,-,n}(s,X^{t,x}_s,(y^{kl})_{(k,l)\in A^1\times A^2},
z,\pin u(e)\gamma^{ij}(X^{t,x}_s,e)n(de))\\\\
:=&g^{ij}(s,X^{t,x}_s,(y^{kl})_{\klg},
z,\mbox{$\int_E$} u(e)\gamma^{ij}(X^{t,x}_s,e)n(de))\\
&\qq\qq\qq\qq\qq\qq +n(y^{ij}-\max\limits_{k\in
A^1_i}(y^{kj}-\underline{g}_{ik}(s,X^{t,x}_s)))^-.
\end{align*}
The existence of
$(\underline{Y}^{ij,n},\underline{Z}^{ij,n},\underline{U}^{ij,n},\underline{K}^{ij,n})_{\ij}$
is obtained thanks to Proposition \ref{lienedpbsde} in considering the system of
reflected BSDEs with interconnected lower obstacles associated with the
data \\$\{(-\underline{f}^{ij,n}(s,X^{t,x}_s,-\overrightarrow{y},-z,-u))_{\ij},
 (-h^{ij})_{\ij}, (\bar g_{jl})_{j,l\in A_2}\}$ which has a unique solution\\
 $({Y_1}^{ij,n},{Z_1}^{ij,n},{U_1}^{ij,n},{K_1}^{ij,n})_{\ij}$ and then it is enough to set  $(\underline{Y}^{ij,n},\underline{Z}^{ij,n},\underline{U}^{ij,n},\underline{K}^{ij,n})_{\ij}=(-{Y_1}^{ij,n},-{Z_1}^{ij,n},-{U_1}^{ij,n},
 {K_1}^{ij,n})_{\ij}$.
 The following is the analogous of Proposition \ref{lienedpbsde}.
 \begin{propo}\label{lienedpbsde2}
 \noindent i) For any fixed $\ijg$ and $n\geq 0$ we have:
\be\label{nlimit2}\lim\limits_{m\rightarrow\infty}\E[\sup\limits_{t\leq
s\leq T}|Y^{ij,n,m}_s-\underline {Y}^{ij,n}_s|^2]\rightarrow 0
\mbox{ and }P -a.s.,~~\underline{Y}^{ij,n}\leq
\underline{Y}^{ij,n+1}.\ee (ii) There exists a unique $m_1\times
m_2$-uplet of deterministic continuous functions
$(\underline{u}^{kl,n})_{(k,l)\in A^1\times A^2}$ in $\Pi_g$ such
that, for every $\tx$,\be\label{nFeyman-Kac} \underline {
Y}^{ij,n}_s=\underline{u}^{ij,n}(s,X^{t,x}_s),\forall s\in [t,T].\ee
Moreover, for any $\ijg$ and $(t,x)\in [0,T]\times \R^k$,
$\underline{u}^{ij,n}(t,x)\leq \underline{u}^{ij,n+1}(t,x)$.
Finally, $(\underline{u}^{ij,n})_{(i,j)\in A^1\times A^2}$ is the
unique viscosity solution in the class $\Pi_g$ of the following
system of
 variational inequalities with inter-connected upper obstacles. $\forall
 \ijg$,\be
\label{nSIPDE}\left\{
    \begin{array}{ll}
      \max\{\underline{u}^{ij,n}(t,x)-\min\limits_{l\in
      A^2_j}(\underline{u}^{il,n}(t,x)+\overline{g}_{jl}(t,x));
      -\partial_t\underline{u}^{ij,n}(t,x)-\cL
      \underline{u}^{ij,n}(t,x)\\\
      \qq\qq -g^{ij,-,n}(t,x,(\underline{u}^{kl,n}(t,x))_{\klg},\sigma(t,x)\tp D_x\underline{u}^{ij,n}(t,x),
      I_{ij}(t,x,\underline{u}^{ij,n}))\}=0~\,\,;\\\
      \underline{u}^{ij,n}(T,x)=h^{ij}(x).
    \end{array}
    \right.\ee
\end{propo}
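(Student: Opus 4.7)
The plan is to deduce Proposition \ref{lienedpbsde2} almost entirely from Proposition \ref{lienedpbsde} via the symmetry transformation sketched in the paragraph just before the statement. Setting $Y_1^{ij,n}:=-\underline{Y}^{ij,n}$, $Z_1^{ij,n}:=-\underline{Z}^{ij,n}$, $U_1^{ij,n}:=-\underline{U}^{ij,n}$ and $K_1^{ij,n}:=\underline{K}^{ij,n}$, system (\ref{increasing}) rewrites as a system of reflected BSDEs of type (\ref{decreasing}) for $(Y_1^{ij,n})_{\ij}$, with lower-obstacle switching costs $(\bar g_{jl})_{j,l\in A^2}$, driver $\tilde f^{ij,n}(s,x,\vec y,z,u):=-\underline f^{ij,n}(s,x,-\vec y,-z,-u)$ and terminal value $-h^{ij}$. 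A direct check shows that $\tilde f^{ij,n}$ is Lipschitz, non-decreasing in the off-diagonal $y$-components and non-decreasing in $q$, and, since Assumption {\bf (A3)} is symmetric in $(\underline g_{ik})$ and $(\bar g_{jl})$, the non-free-loop property is satisfied by $(\bar g_{jl})$ alone. Proposition \ref{lienedpbsde} then applies directly to this transformed system and yields existence and uniqueness of $(\underline Y^{ij,n},\underline Z^{ij,n},\underline U^{ij,n},\underline K^{ij,n})_{\ij}$.

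For part (i), the convergence $\E[\sup_{s\in[t,T]}|Y^{ij,n,m}_s-\underline Y^{ij,n}_s|^2]\to 0$ as $m\to\infty$ follows from the same Essaky monotonic limit theorem step used in Proposition \ref{lienedpbsde}, applied now to the non-decreasing sequence $(-Y^{ij,n,m})_{m\geq 0}$ which by (\ref{monotonicity}) increases to $Y_1^{ij,n}=-\underline Y^{ij,n}$; continuity of the predictable limit process $K_1^{ij,n}$ is again guaranteed by the non-free-loop property for $(\bar g_{jl})$. The monotonicity $\underline Y^{ij,n}\leq\underline Y^{ij,n+1}$ in $n$ is obtained by letting $m\to\infty$ in the inequality $Y^{ij,n,m}\leq Y^{ij,n+1,m}$ of (\ref{monotonicity}).

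For part (ii), the pointwise limit $\underline u^{ij,n}(t,x):=\lim_{m\to\infty}v^{ij,n,m}(t,x)$ exists by monotone convergence from (\ref{v-monotonicity}), and the Markovian identification (\ref{nFeyman-Kac}) is obtained by combining (\ref{Feman-Kac}) with the convergence (\ref{nlimit2}) and the uniqueness of the limit. Continuity and polynomial growth of $\underline u^{ij,n}$, and the monotonicity $\underline u^{ij,n}\leq\underline u^{ij,n+1}$, follow from the transformed version of Proposition \ref{lienedpbsde}(ii). To show that $(\underline u^{ij,n})_{\ij}$ is a viscosity solution of (\ref{nSIPDE}), I would pass to the limit $m\to\infty$ in the IPDE (\ref{eqvijnm2}) satisfied by $v^{ij,n,m}$, following the stability argument in \cite{zhao}, Theorem 4.1. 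Uniqueness in $\Pi_g$ is then immediate from Remark \ref{unicitecasunebarriere}.

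The step I expect to be most delicate is the stability of viscosity solutions under the limit $m\to\infty$, specifically the passage to the limit in the non-local operator $I_{ij}(t,x,v^{ij,n,m})$ whose argument itself varies with $m$ and is only known to converge monotonically. This is handled by the standard splitting $I_{ij}=I_{ij}^{1,\d}+I_{ij}^{2,\d}$ from the preliminaries: at a contact point $(t_0,x_0)$ of $\underline u^{ij,n}-\phi$ with $\phi\in\cC^{1,2}$, one replaces $v^{ij,n,m}$ by $\phi$ on the local part (small $|e|$) and uses dominated/monotone convergence on the far part, exploiting bound (\ref{condgamma})(b) and the polynomial growth of $v^{ij,n,m}$ uniform in $m$. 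In the same passage, the $m$-penalization term $-m(y^{ij}-\min_l(y^{il}+\bar g_{jl}))^+$ present in $f^{ij,n,m}$ but absent from $\underline f^{ij,n}$ is precisely what forces, in the limit, the upper obstacle constraint $\underline u^{ij,n}\leq\min_l(\underline u^{il,n}+\bar g_{jl})$ appearing in (\ref{nSIPDE}).
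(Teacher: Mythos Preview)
Your proposal is correct and follows exactly the approach the paper takes: the paper does not give a separate proof of Proposition~\ref{lienedpbsde2} at all, but simply states it as ``the analogous of Proposition~\ref{lienedpbsde}'' after explaining, in the paragraph just before the statement, the very sign-change transformation $(\underline{Y}^{ij,n},\underline{Z}^{ij,n},\underline{U}^{ij,n},\underline{K}^{ij,n})=(-Y_1^{ij,n},-Z_1^{ij,n},-U_1^{ij,n},K_1^{ij,n})$ that you spell out. Your write-up is in fact more detailed than the paper's, which relies entirely on the reader transporting each step of the proof of Proposition~\ref{lienedpbsde} through this symmetry.
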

\no Now let us define $\bar{u}^{ij}$, $\underline{u}^{ij}$, $\ijg$, by:
$$\bar{u}^{ij}(t,x):=\lim\limits_{m\rightarrow\infty}\bar{u}^{ij,m}(t,x),~~\underline{u}^{ij}(t,x):=\lim\limits_{n\rightarrow\infty}\underline{u}^{ij,n}(t,x),\,\,\tx.$$
We then have:
\begin{cor} For any $(i,j)\in A^1\times A^2$, the function $\bar{u}^{ij}$ (resp.
$\underline{u}^{ij}$) is $usc$ (resp. $lsc$). Moreover,
$\bar{u}^{ij}$ and $\underline{u}^{ij}$ belong to $\Pi_g$ and for any
$(t,x)\in[0,T]\times \R^k$, $$ \underline{u}^{ij}(t,x)\leq
\bar{u}^{ij}(t,x).$$
\end{cor}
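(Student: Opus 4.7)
The plan is to read off all three claims from the monotonicity properties already established in Propositions \ref{lienedpbsde} and \ref{lienedpbsde2}; no fresh variational inequality or BSDE estimate is required. First I would combine the monotone convergences $Y^{ij,n,m}\nearrow \bar Y^{ij,m}$ and $Y^{ij,n,m}\searrow \underline Y^{ij,n}$ (from (\ref{nlimit}) and (\ref{nlimit2}) respectively) with the deterministic representations (\ref{Feman-Kac}), (\ref{mFeyman-Kac}) and (\ref{nFeyman-Kac}) evaluated at $s=t$ (so that $X^{t,x}_t=x$). This yields the two-sided sandwich
$$\underline u^{ij,n}(t,x)\le v^{ij,n,m}(t,x)\le \bar u^{ij,m}(t,x),\qquad n,m\ge 0,\ (t,x)\in[0,T]\times\R^k.$$

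Since $(\underline u^{ij,n})_n$ is nondecreasing and $(\bar u^{ij,m})_m$ is nonincreasing (Propositions \ref{lienedpbsde}(ii) and \ref{lienedpbsde2}(ii)), letting $n\to\infty$ and then $m\to\infty$ immediately gives $\underline u^{ij}(t,x)\le \bar u^{ij}(t,x)$, which is the final inequality of the statement. For the polynomial growth, the same sandwich, together with the obvious bounds obtained from monotonicity, provides
$$\underline u^{ij,0}(t,x)\le \underline u^{ij}(t,x)\le \bar u^{ij}(t,x)\le \bar u^{ij,0}(t,x),$$
and since both $\bar u^{ij,0}$ and $\underline u^{ij,0}$ belong to $\Pi_g$ (Propositions \ref{lienedpbsde} and \ref{lienedpbsde2}), the functions $\bar u^{ij}$ and $\underline u^{ij}$ are squeezed between two $\Pi_g$ functions and therefore themselves belong to $\Pi_g$.

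Finally, the semicontinuity follows from the pointwise identities $\bar u^{ij}=\inf_m \bar u^{ij,m}$ and $\underline u^{ij}=\sup_n \underline u^{ij,n}$, which are just restatements of the monotonicity. Since every approximant is continuous, the standard estimate $\limsup_{(t',x')\to(t,x)}\bar u^{ij}(t',x')\le \limsup_{(t',x')\to(t,x)}\bar u^{ij,m}(t',x')=\bar u^{ij,m}(t,x)$ for each fixed $m$, followed by infimum over $m$, shows that $\bar u^{ij}$ is upper semicontinuous; the symmetric argument with $\liminf$ and supremum over $n$ shows $\underline u^{ij}$ is lower semicontinuous. I do not anticipate any genuine obstacle here: the only care required is to keep the directions of monotonicity in $n$ (increasing) and in $m$ (decreasing) straight when chaining the inequalities and when passing to the respective limits.
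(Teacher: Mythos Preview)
Your proof is correct and follows essentially the same approach as the paper's: both extract the usc/lsc properties from monotone limits of continuous functions, obtain polynomial growth by sandwiching between $\underline u^{ij,0}$ and $\bar u^{ij,0}$, and derive the final inequality from the double monotonicity of $(v^{ij,n,m})_{n,m}$. Your presentation via the explicit two-sided sandwich $\underline u^{ij,n}\le v^{ij,n,m}\le \bar u^{ij,m}$ is slightly more symmetric than the paper's, which routes the upper bound on $\underline u^{ij}$ through $u^{ij,n,0}$ and $\bar u^{ij,0}$ separately, but the substance is identical.
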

\begin{proof}
For any $(i,j)\in A_1\times A_2$, the function $\bar{u}^{ij}$ (resp.
$\underline{u}^{ij}$) is obtained as a decreasing
 (resp. increasing) limit of continuous functions. Therefore, it is
$usc$  (resp. $lsc$). Next, for any $(i,j)$ and $n$, $m$,
$$u^{ij,n,m}(t,x)\leq u^{ij,n,0}(t,x),~~(t,x)\in [0,T]\times \R^k,$$ as the sequence $(u^{ij,n,m})_{m\geq 0}$ is decreasing. Thus, taking the limit as $m\rightarrow\infty$ we obtain, $$\underline{u}^{ij,n}\leq u^{ij,n,0}.$$
Now using \eqref{Feman-Kac} and \eqref{nlimit}, it follows that, for any $t\leq T$ and $s\in [t,T]$, $Y^{ij,n,0}_s=u^{ij,n,0}(s,X^{t,x}_s)$ and the processes $Y^{ij,n,0}$ converges in $\cS^2$, as $n\rightarrow\infty$, to $\bar{Y}^{ij,0}$ which is solution of \eqref{decreasing} with $m=0$.
 Furthermore, by \eqref{mFeyman-Kac}, there exists a deterministic continuous function $\bar{u}^{ij,0}$ with polynomial growth such that for any $t\leq T$ and $s\in [t,T]$, $\bar Y^{ij,0}_s=\bar{u}^{ij,0}(s,X^{t,x}_s)$. Then taking $s=t$ and the limit as $n\rightarrow\infty$  to obtain
$$\underline{u}^{ij}(t,x):=\lim\limits_{n\rightarrow\infty}\underline{u}^{ij,n}(t,x)\leq \lim\limits_{n\rightarrow\infty}u^{ij,n,0}(t,x)=\bar{u}^{ij,0}(t,x),~~\forall(t,x)\in [0,T]\times \R^k.$$
But $\bar{u}^{ij,0}$ and $\underline{u}^{ij,n}$ belong to $\Pi_g$ and $\underline{u}^{ij,n}\leq \underline{u}^{ij,n+1}$. Thus $\underline{u}^{ij}\in \Pi_g$, for any $(i,j)\in A_1\times A_2$. The last inequality follows from \eqref{v-monotonicity} and the definitions of $\bar{u}^{ij}$ and $\underline{u}^{ij}$. In the  same way one can show that $\bar{u}^{ij}\in \Pi_g$, for any $(i,j)\in A_1\times A_2$.
\end{proof}
We now focus on the proof of existence of a solution for system (\ref{SIPDE}).
\begin{propo}\label{proprietedesoussolution}
The family $(\bar{u}^{ij})_{(i,j)\in A^1\times A^2}$ is a viscosity
subsolution of the system (\ref{SIPDE}).
\end{propo}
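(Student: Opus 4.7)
The plan is to use the classical penalization / half-relaxed limit stability argument. Each $\bar u^{ij,m}$ is the viscosity solution of the penalized system (\ref{mSIPDE}) in which only the lower obstacle is enforced and the upper obstacle is replaced by the term $-m(y^{ij}-\min_l(y^{il}+\bar g_{jl}))^+$ inside the Hamiltonian. As $m\to\infty$ this penalty will force the upper obstacle inequality to hold while monotone stability preserves the lower obstacle and the PDE inequality. Since $\bar u^{ij}$ is usc as a pointwise decreasing limit of continuous functions, it coincides with its upper half-relaxed limit, so a Barles--Perthame type extraction applies cleanly. The terminal condition is trivial: $\bar u^{ij,m}(T,x)=h^{ij}(x)$ for every $m$, hence $\bar u^{ij}(T,x)=h^{ij}(x)$.

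For the interior condition I would fix $(t_0,x_0)\in(0,T)\times\R^k$ and a test $\phi\in C^{1,2}$ with $\bar u^{ij}-\phi$ attaining a strict global maximum $0$ at $(t_0,x_0)$ (w.l.o.g. since $\bar u^{ij}\in\Pi_g$, by modifying $\phi$ at infinity). By the standard Barles--Perthame lemma applied to the decreasing sequence of continuous functions $(\bar u^{ij,m})_m$, one extracts $m_q\to\infty$ and $(t_q,x_q)\to(t_0,x_0)$ such that $(t_q,x_q)$ is a global maximum of $\bar u^{ij,m_q}-\phi$ and $\bar u^{ij,m_q}(t_q,x_q)\to\bar u^{ij}(t_0,x_0)$. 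Writing the subsolution inequality for $\bar u^{ij,m_q}$ in (\ref{mSIPDE}) at $(t_q,x_q)$ and splitting the outer $\min$ yields two cases. In the \emph{lower obstacle} case, if along a subsequence $\bar u^{ij,m_q}(t_q,x_q)\leq \max_{k\neq i}(\bar u^{kj,m_q}(t_q,x_q)-\underline g_{ik}(t_q,x_q))$, passing to the $\limsup$ and using the pointwise bound $\limsup_q \bar u^{kl,m_q}(t_q,x_q)\leq \bar u^{kl}(t_0,x_0)$ (valid for every $(k,l)$ by monotonicity in $m$ together with continuity of each $\bar u^{kl,m}$), together with continuity of $\underline g_{ik}$, gives $\bar u^{ij}(t_0,x_0)\leq L^{ij}[\vec{\bar u}](t_0,x_0)$, so the outer $\min$ is $\leq 0$.

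In the complementary \emph{PDE} case one has, for all large $q$,
$$-\partial_t\phi(t_q,x_q)-\cL\phi(t_q,x_q)-g^{ij}\bigl(t_q,x_q,(\bar u^{kl,m_q}(t_q,x_q))_{kl},\sigma^\top D_x\phi,I_{ij}(t_q,x_q,\phi)\bigr)+m_q R_q\leq 0,$$
with $R_q:=(\bar u^{ij,m_q}(t_q,x_q)-\min_l(\bar u^{il,m_q}(t_q,x_q)+\bar g_{jl}(t_q,x_q)))^+$. The first three terms being uniformly bounded (by smoothness of $\phi$ and polynomial growth of $g^{ij}$ and of the $\bar u^{kl,m_q}\in\Pi_g$), $m_qR_q$ is bounded above, so $R_q\to 0$; a componentwise $\limsup$ then yields $\bar u^{ij}(t_0,x_0)\leq U^{ij}[\vec{\bar u}](t_0,x_0)$. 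Dropping the nonnegative term $m_qR_q$ and passing to the $\limsup$ in $q$ in the remaining PDE inequality produces the desired viscosity inequality at $(t_0,x_0)$, so that the inner $\max$ and the outer $\min$ are both $\leq 0$. I expect the main obstacle to be precisely this last limit: only the $(i,j)$-th component of $(\bar u^{kl,m_q}(t_q,x_q))_{kl}$ is known to converge, the others admitting only the one-sided bound above, so the passage will rely on Assumption (A2) --- monotonicity of $g^{ij}$ in $y^{kl}$ for $(k,l)\neq(i,j)$ --- to replace $\bar u^{kl,m_q}(t_q,x_q)$ by $\bar u^{kl}(t_0,x_0)+\epsilon$ for $q$ large, pass to the limit in $q$ using joint continuity of $g^{ij}$ in $(t,x)$ and its Lipschitz dependence on the remaining arguments, and finally send $\epsilon\downarrow 0$. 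Continuity of $I_{ij}(\cdot,\cdot,\phi)$ in $(t,x)$ (ensured by (\ref{condgamma}) and the smoothness of $\phi$) makes the non-local term harmless.
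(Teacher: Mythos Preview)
Your proof is correct and follows the same overall strategy as the paper: pass to the limit in the penalized one-obstacle system (\ref{mSIPDE}) via a Barles--Perthame stability argument, splitting into the lower-obstacle case and the PDE case, and in the latter using boundedness of the Hamiltonian to force the penalty term to vanish, yielding the upper-obstacle inequality.

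The one genuine technical difference lies in how the non-local terms are carried through the limit. You work throughout with the \emph{global} maximum formulation (Definition~2.1), so at the approximating points $(t_q,x_q)$ both $\cL\phi$ and $I_{ij}(\cdot,\cdot,\phi)$ involve only the smooth test function $\phi$, and pass to the limit by dominated convergence. The paper instead takes its maxima on a compact ball and invokes the \emph{local} definition from the Appendix (Proposition~\ref{defappendix}), which splits $I=I^1_\delta+I^2_\delta$ and $I_{ij}=I^{1,\delta}_{ij}+I^{2,\delta}_{ij}$ with the $I^{2,\delta}$ pieces evaluated at $\bar u^{ij,m_k}$ itself; passing to the limit then requires Fatou's lemma together with a linearization of $g^{ij}$ in its last argument (using (A1)(iii)) to control $I^{2,\delta}_{ij}(t_{m_k},x_{m_k},\bar u^{ij,m_k})$, and a final step replacing $\bar u^{ij}$ by $\phi$ in these non-local terms. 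Your route is shorter precisely because $\beta$ is bounded ((A0)(ii)), so modifying $\phi$ at infinity to secure a global maximum leaves the non-local operators at $(t_0,x_0)$ untouched; the paper's route is consistent with the local framework used in its comparison theorem and would adapt more readily to unbounded jumps. One small point worth making explicit in your write-up: the uniform-in-$m$ polynomial bound you invoke to show boundedness of the first three terms (hence $R_q\to 0$) follows from the sandwich $\bar u^{kl}\le \bar u^{kl,m}\le \bar u^{kl,0}$ with both ends in $\Pi_g$.
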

\begin{proof} First recall that for any $\ijg$, $\bar{u}^{ij}=\lim_{m}\searrow
\bar u^{ij,m}$, so that $\bar{u}^{ij}$ is $usc$. Moreover since $\bar{u}^{ij,m}(T,x)=h^{ij}(x)$ then $\bar{u}^{ij}(T,x)=h^{ij}(x),\,\forall x\in \R^k.$

Now let $\ijg$ and
$(t,x)\in (0,T)\times \R^k$ be fixed. We suppose that there exists
$\epsilon_0>0$ s.t.\be \label{ineqzz}\bar{u}^{ij}(t,x)\geq
L^{ij}[\vec{\bar{u}}](t,x)+\epsilon_0,\ee otherwise the subsolution
property holds. Thanks to the decreasing convergence of
$(\bar{u}^{ij,m})_{m\geq 0}$ to $\bar{u}^{ij}$, there exists $m_0$
such that for any $m\geq m_0$, we have
\be\label{5210}\bar{u}^{ij,m}(t,x)\geq
L^{ij}[(\bar{u}^{pq,m})_{(p,q)\in A^1\times
A^2}](t,x)+\frac{\epsilon_0}{2}.\ee As for any $m\geq 0$,
$\bar{u}^{ij,m}$ and $L^{ij}[(\bar{u}^{pq,m})_{(p,q)\in A^1\times
A^2}]$ are continuous, then there exists a neighborhood $\Theta_m$ of
$(t,x)$ such that \be\label{07031}\bar{u}^{ij,m}(t',x')\geq
L^{ij}[(\bar{u}^{pq,m})_{(p,q)\in A^1\times
A^2}](t',x')+\frac{\epsilon_0}{4},\,\forall (t',x')\in \Theta_m.\ee

Let now $\phi$ be a ${\cal C}^{1,2}$-function such that
$\phi(t,x)=\bar u^{ij}(t,x)$ and $\bar u^{ij}-\phi $ has a global
strict maximum in $(t,x)$. Next let $\d>0$ and for $m\geq 0$ let
$(t_m,x_m)$ be the global maximum of $\bar u^{ij}-\phi$ on
$[0,T]\times B'(x,2\d K_\b)$ ($K_\b$ is a bound for $\beta$ and
$B'(x,2\d K_\b)$ is the closure of $B(x,2\d K_\b)$), which exists
since the function $\bar u^{ij}-\phi$ is $usc$. But there exists a
subsequence $\{m_k\}$ such that \be
\label{suiteextraite}(t_{m_k},x_{m_k})\rw_k(t,x) \mbox{ and }\bar
u^{ij, m_k}(t_{m_k},x_{m_k})\rw_k \bar{u}^{ij}(t,x).\ee Actually by
Lemma 6.1 in \cite{usersguide}, there exist a subsequence $\{m_k\}$
and a sequence $(t'_{m_k},x'_{m_k})_k$ such that $$
(t'_{m_k},x'_{m_k})_k\rightarrow_k (t,x) \mbox{ and }\bar
u^{ij,m_k}(t'_{m_k},x'_{m_k})\rightarrow \bar u^{ij}(t,x).
$$Next let us consider a convergent subsequent of $(t_{m_k},x_{m_k})$,
which we still denote by $(t_{m_k},x_{m_k})$, and let $(\bar t,\bar
x)$ be its limit. Then for some $k_0$ and for $k\geq k_0$ we have
$$\begin{array}{l}\bar{u}^{ij}(\bar t ,\bar x)-\phi(\bar t,\bar
x)\geq
\limsup_k(\bar{u}^{ij,m_k}(t_{m_k},x_{m_k})-\phi(t_{m_k},x_{m_k}))\geq
\liminf_k(\bar{u}^{ij,m_k}(t_{m_k},x_{m_k})-\phi(t_{m_k},x_{m_k}))
\\\qq \geq \liminf_k (
\bar{u}^{ij,m_k}(t'_{m_k},x'_{m_k})-\phi(t'_{m_k},x'_{m_k}))=
\bar{u}^{ij}(t,x)-\phi(t,x)\geq \bar{u}^{ij}(\bar t ,\bar
x)-\phi(\bar t,\bar x). \end{array}$$ It implies that
$\bar{u}^{ij}(t,x)-\phi(t,x)=\bar{u}^{ij}(\bar t ,\bar x)-\phi(\bar
t,\bar x)$ then $(t,x)=(\bar t,\bar x)$ since the maximum is strict.
On the other hand we obviously have $\bar u^{ij,
m_k}(t_{m_k},x_{m_k})\rw_k \bar{u}^{ij}(t,x)$. Finally since this is
valid for any subsequence of $(t_{m_k},x_{m_k})$, then the claim
follows.

But from the subsequence $\{m_k\}$ one can substract a subsequence
which we still denote by $\{m_k\}$ such that $(t_{m_k},x_{m_k})$
belongs to $\Theta_{m_k}$. Indeed if this is not possible one can
find a subsequence $\{m_p\}$ of $\{m_k\}$ such that for $p\geq 0$,
$(t_{m_p},x_{m_p})$ does not belong to $\Theta_{m_p}$, i.e.,
$$\bar{u}^{ij,m_p}(t_{m_p},x_{m_p})<L^{ij}[(\bar{u}^{pq,m_p})_{(p,q)\in
A^1\times A^2}](t_{m_p},x_{m_p})+\frac{\epsilon_0}{2}.$$ Then in taking the limit w.r.t. $p$ we obtain
$$\bar{u}^{ij}(t,x)\leq (L^{ij}[(\bar{u}^{pq})_{(p,q)\in
A^1\times A^2}])^*(t,x)+\frac{\epsilon_0}{2}$$where $(.)^*$ stands
for the upper semi-continuous envelope. But
$$
(L^{ij}[(\bar{u}^{pq})_{(p,q)\in A^1\times A^2}])^*=\max_{k\in
A_1^i}(\bar{u}^{kj}-\underline g_{ik})^*=\max_{k\in
A_1^i}(\bar{u}^{kj}-\underline g_{ik}).
$$ Therefore we have
       $$\bar{u}^{ij}(t,x)\leq \max_{k\in
A_1^i}(\bar{u}^{kj}-\underline
g_{ik})(t,x)+\frac{\epsilon_0}{2}=L^{ij}[\bar{u}](t,x)+\frac{\epsilon_0}{2}
   $$
which is contradictory with (\ref{ineqzz}). Hereafter we consider
this subsequence $\{m_k\}$.

Now for $k$ large enough: (i) $(t_{m_k},x_{m_k})\in (0,T)\times
B(x,2K_\b\delta)$ and is the global maximum of
$\bar{u}^{ij,m_k}-\phi$ in $(0,T)\times B(x_{m_k},K_\b\delta)$ ;
(ii)
$\bar{u}^{ij,m_k}(t_{m_k},x_{m_k})>L^{ij}[(\bar{u}^{pq,m_k})_{(p,q)\in
A^1\times A^2}](t_{m_k},x_{m_k})$. \ms As $\bar{u}^{ij,m_k}$ is a
subsolution of (\ref{mSIPDE}), then by Proposition 5.1 - Remark
\ref{remappendix} in Appendix, we have
\be\label{eqlim2}\begin{array}{l}
 ~~~-\partial_t\phi(t_{m_k},x_{m_k})-\bar{{\cal L}}
    \phi(t_{m_k},x_{m_k})-I^{1}_\d(t_{m_k},x_{m_k},\phi)-
    I^{2}_\d(t_{m_k},x_{m_k},D_x\phi(t_{m_k},x_{m_k}),
    \bar{u}^{ij,m_k})\\\qq \qq \qq +m_k(\bar u^{ij,m_k}
      (t_{m_k},x_{m_k})-\min_{l\in A_2^j}(\bar u^{il,m_k}
      (t_{m_k},x_{m_k})+\bar g_{jl}(t_{m_k},x_{m_k})))^+
          \\\
 \leq g^{ij}[t_{m_k},x_{m_k},(\bar{u}^{pl,m_k}(t_{m_k},x_{m_k}))_{(p,l)\in
      A^1\times
      A^2},\sigma (t_{m_k},x_{m_k})\tp D_x\phi(t_{m_k},x_{m_k}),\\
      \qq\qq\qq  I^{1,\d}_{ij}(t_{m_k},x_{m_k},\phi)+I^{2,\d}_{ij}(t_{m_k},x_{m_k},\bar{u}^{ij,m_k})].\end{array}
\ee From which we deduce, in dividing both hand-sides of (\ref{eqlim2}) by $m_k$ and then taking the limit as $k\rightarrow
\infty$,  that
$$\eps_k=(\bar u^{ij,m_k}
      (t_{m_k},x_{m_k})-\min_{l\in A_2^j}(\bar u^{il,m_k}
      (t_{m_k},x_{m_k})+\bar g_{jl}(t_{m_k},x_{m_k})))^+\rightarrow
      _k0.$$
      Next fix $k_0$ and let $k\geq k_0$. As the sequence
      $(\bar u^{ij,m})_m$ is decreasing then
      $$\begin{array}{ll}
      \bar u^{ij,m_k}(t_{m_k},x_{m_k}) &\leq
            \min_{l\in A_2^j}(\bar u^{il,m_k}
      (t_{m_k},x_{m_k})+\bar g_{jl}(t_{m_k},x_{m_k}))+\eps_k \\
      {}&\leq \min_{l\in A_2^j}(\bar u^{il,m_{k_0}}
      (t_{m_k},x_{m_k})+\bar g_{jl}(t_{m_k},x_{m_k}))+\eps_k
      \end{array}
      $$
Take the limit w.r.t $k$, using continuity of $\bar
u^{il,m_{k_0}}$ then send $k_0$ to $+\infty$ to obtain:
$$
  \bar u^{ij}(t,x)\leq
            \min_{l\in A_2^j}(\bar u^{il}(t,x)+\bar g_{jl}(t,x)).
$$
Next there exists a subsequence of $\{m_k\}$ (which we still denote
by $\{m_k\}$) such that: 

(i) $\forall (p,l)\in A^1_i\times A^2_j$,
$(\bar{u}^{pl,m_k}(t_{m_k},x_{m_k}))_k$ is convergent and then
$\lim_k\bar{u}^{pl,m_k}(t_{m_k},x_{m_k})\leq \bar{u}^{pl}(t,x)$ ;

(ii) $I^{1}_\d(t_{m_k},x_{m_k},\phi)\rw_k I^{1}_\d(t,x,\phi)$ and 
$I^{1,\d}_{ij}(t_{m_k},x_{m_k},\phi))\rw_k I^{1,\d}_{ij}(t,x,\phi)$;

(iii) By Fatou's Lemma, $\limsup_k I^{2}_\d(t_{m_k},x_{m_k},D_x\phi(t_{m_k},x_{m_k}),\bar{u}^{ij,m_k})\leq I^{2}_\d(t,x,D_x\phi(t,x),\bar{u}^{ij})$  and  \\
$\limsup_k I^{2,\d}_{ij}(t_{m_k},x_{m_k},\bar{u}^{ij,m_k})))\leq I^{2,\d}_{ij}(t,x,\bar{u}^{ij}).$ 
\ms 

\no Let us now set: $$\begin{array}{l}\Delta_k:=
 g^{ij}[t_{m_k},x_{m_k},(\bar{u}^{pl,m_k}(t_{m_k},x_{m_k}))_{(p,l)\in
      A^1\times
      A^2},\sigma (t_{m_k},x_{m_k})\tp D_x\phi(t_{m_k},x_{m_k}),\\
    \qq  \qq  \qq  \qq  \qq I^{1,\d}_{ij}(t_{m_k},x_{m_k},\phi)+I^{2,\d}_{ij}(t_{m_k},x_{m_k},\bar{u}^{ij,m_k})]\\
- g^{ij}[t_{m_k},x_{m_k},(\bar{u}^{pl,m_k}(t_{m_k},x_{m_k}))_{(pl)\in
      A^1\times
      A^2},\sigma (t_{m_k},x_{m_k})\tp D_x\phi (t_{m_k},x_{m_k}),
      I^{1,\d}_{ij}(t,x,\phi)+I^{2,\d}_{ij}(t,x,\bar{u}^{ij})]
\end{array}
$$
Then, by linearizing $g^{ij}$ w.r.t. $q$, there exists a non-negative bounded quantity $\Xi_2$ such that 
\begin{align*}
\Delta_k&=\Xi_2 \times (I^{1,\d}_{ij}(t_{m_k},x_{m_k},\phi)+I^{2,\d}_{ij}(t_{m_k},x_{m_k},\bar{u}^{ij,m_k})-I^{1,\d}_{ij}(t,x,\phi)-I^{2,\d}_{ij}(t,x,\bar{u}^{ij})\\
&\leq C_{ij} \times (I^{1,\d}_{ij}(t_{m_k},x_{m_k},\phi)+I^{2,\d}_{ij}(t_{m_k},x_{m_k},\bar{u}^{ij,m_k})-I^{1,\d}_{ij}(t,x,\phi)-I^{2,\d}_{ij}(t,x,\bar{u}^{ij}))^+
\end{align*}
where $C_{ij}$ is the Lipschitz constant of $g^{ij}$. Therefore, with (ii)-(iii) above, we have that $\limsup_k\Delta_k\leq 0$. 
\ms

\no Going back now to (\ref{eqlim2}), and take the limit superior w.r.t. $k$ to get:  
$$\begin{array}{l}
 ~~~-\partial_t\phi(t,x)-\bar{{\cal L}}
    \phi(t,x)\leq I^{1}_\d(t,x,\phi)+I^{2}_\d(t,x,D_x\phi(t,x),\bar{u}^{ij})+\\\qq\qq\qq\qq g^{ij}[t,x,(\bar{u}^{pl}(t,x))_{(p,l)\in \G},\sigma(t,x)^\top D_x\phi(t,x),I^{1,\d}_{ij}(t,x,\phi)+I^{2,\d}_{ij}(t,x,\bar{u}^{ij})].\end{array}
$$
But $\bar{u}^{ij}(t,x)=\phi(t,x)$ and $\bar{u}^{ij}\leq \phi$, then
$I^{2}_\d(t,x,D_x\phi(t,x),\bar{u}^{ij})\leq
I^{2}_\d(t,x,D_x\phi(t,x),\phi)$ and $I^{2,\d}_{ij}(t,x,\bar{u}^{ij})\leq
I^{2,\d}_{ij}(t,x,\phi)$ . Plugging now this inequality
in the previous one to obtain
$$\begin{array}{l}
 -\partial_t\phi(t,x)-\bar{{\cal L}}
    \phi(t,x)-I(t,x,\phi)-
g^{ij}[t,x,(\bar{u}^{pl}(t,x))_{(p,l)\in \G},\sigma(t,x)^\top D_x\bar{u}^{ij}(t,x),I_{ij}(t,x,\phi)]
\leq 0.\end{array}
$$ Therefore $\bar{u}^{ij}$ is a viscosity subsolution of
$$\left\{
    \begin{array}{ll}
\min\{(w-L^{ij}[({\bar u }^{kl})_{\klg }])(t,x); \max\{(w-U^{ij}[({\bar u }^{kl})_{\klg }])(t,x);\\\
-\partial_t w (t,x)-\cL
w(t,x)-g^{ij}(t,x,[({\bar u }^{pl}(t,x))_{(p,l)\in \G^{-(i,j)}},w],\sigma(t,x)^\top D_x w(t,x),I_{ij}(t,x,w))\}\}=0;\\\
     w(T,x)=h^{ij}(x).
    \end{array}
    \right.$$
As $(i,j)$ in $\Gamma$ is arbitrary then $(\bar{u}^{ij})_{(i,j)\in A^1\times A^2}$
is a viscosity subsolution of (\ref{SIPDE}).
\end{proof}
\begin{propo}
Let $m_0$ be fixed in $\mathbf{N}$. Then  the family
$(\bar{u}^{ij,m_0})_{(i,j)\in A^1\times A^2}$ is a viscosity
supersolution of system (\ref{SIPDE}).
\end{propo}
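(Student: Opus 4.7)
The plan is to deduce the supersolution property for system (\ref{SIPDE}) directly from the fact, proved in Proposition \ref{lienedpbsde}, that $(\bar u^{ij,m_0})_{\ijg}$ is a viscosity solution (hence supersolution) of the penalized system (\ref{mSIPDE}). No limit passing is needed since $m_0$ is fixed; the key observation is that the only difference between the coefficients in (\ref{mSIPDE}) and in (\ref{SIPDE}) is the extra penalization term $-m_0(y^{ij}-U^{ij}[\vec y])^+$, which is non-positive and vanishes exactly when the upper obstacle is inactive.

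First I would dispose of the terminal condition: $\bar u^{ij,m_0}(T,x)=h^{ij}(x)\ge h^{ij}(x)$. Then I fix $\ijg$, $(t_0,x_0)\in(0,T)\times\R^k$ and a test function $\phi\in\cC^{1,2}$ such that $(t_0,x_0)$ is a global minimum of $\bar u^{ij,m_0}-\phi$ with $\bar u^{ij,m_0}(t_0,x_0)=\phi(t_0,x_0)$. Writing $\vec y_0:=(\bar u^{kl,m_0}(t_0,x_0))_{\klg}$, the supersolution property for (\ref{mSIPDE}) yields the two inequalities
\begin{equation*}
\bar u^{ij,m_0}(t_0,x_0)-L^{ij}[\vec{\bar u}^{m_0}](t_0,x_0)\ge 0,
\end{equation*}
and
\begin{equation*}
-\partial_t\phi(t_0,x_0)-\bar\cL\phi(t_0,x_0)-I(t_0,x_0,\phi)-g^{ij}\!\left(t_0,x_0,\vec y_0,\sigma^\top D_x\phi,I_{ij}(t_0,x_0,\phi)\right)\ge -m_0\bigl(\bar u^{ij,m_0}(t_0,x_0)-U^{ij}[\vec{\bar u}^{m_0}](t_0,x_0)\bigr)^+,
\end{equation*}
where I have used the definition $g^{ij,+,m_0}=g^{ij}-m_0(\cdot)^+$.

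I then split into two cases on the sign of $\bar u^{ij,m_0}(t_0,x_0)-U^{ij}[\vec{\bar u}^{m_0}](t_0,x_0)$. If it is non-negative, then this quantity by itself makes the inner $\max$ non-negative. If it is strictly negative, then $(\bar u^{ij,m_0}-U^{ij}[\vec{\bar u}^{m_0}])^+=0$ at $(t_0,x_0)$, so the penalization drops out and the parabolic/nonlocal inequality reduces to
\begin{equation*}
-\partial_t\phi-\bar\cL\phi-I(t_0,x_0,\phi)-g^{ij}\!\left(t_0,x_0,\vec y_0,\sigma^\top D_x\phi,I_{ij}(t_0,x_0,\phi)\right)\ge 0,
\end{equation*}
which again makes the inner $\max$ non-negative. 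Combining with the lower-obstacle inequality, the outer $\min$ is non-negative at $(t_0,x_0)$, and this is exactly the supersolution definition for (\ref{SIPDE}).

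Since the argument is essentially a direct case analysis on the penalization, there is no real obstacle; the only bookkeeping point is to state the supersolution of (\ref{mSIPDE}) in the form used by Definition 2.1 (with $I_{ij}(t_0,x_0,\phi)$ as the nonlocal argument of $g^{ij}$, via e.g. the equivalent formulation in the Appendix), which is immediate because $\phi$ is $\cC^{1,2}$ and $(t_0,x_0)$ is the global minimum of $\bar u^{ij,m_0}-\phi$.
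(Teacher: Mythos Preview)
Your argument is correct and takes a genuinely different, more elementary route than the paper.

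The paper proceeds probabilistically: it rewrites the reflected BSDE (\ref{decreasing}) as a \emph{doubly} reflected BSDE (\ref{2decreasing}) by introducing the artificial upper barrier $\bar Y^{ij,m_0}\vee\min_{l\in A^2_j}(\bar Y^{il,m_0}+\bar g_{jl})$ together with the absolutely continuous process $\bar K^{ij,m_0,-}$. It then invokes the result of Harraj et al.\ \cite{hot} to identify $\bar u^{ij,m_0}$ as a viscosity solution of the associated bilateral IPDE, and finally observes that replacing the upper obstacle $\bar u^{ij,m_0}\vee U^{ij}[\vec{\bar u}^{m_0}]$ by the smaller $U^{ij}[\vec{\bar u}^{m_0}]$ preserves the supersolution inequality.

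You instead work purely at the PDE level, using only what Proposition \ref{lienedpbsde}(ii) already provides: $(\bar u^{ij,m_0})_{\ijg}$ is a viscosity supersolution of (\ref{mSIPDE}). Since the supersolution property of a $\min$ delivers both $\bar u^{ij,m_0}-L^{ij}[\vec{\bar u}^{m_0}]\ge 0$ and $-\partial_t\phi-\cL\phi-g^{ij}(\ldots)\ge -m_0(\bar u^{ij,m_0}-U^{ij}[\vec{\bar u}^{m_0}])^+$, your two-case split on the sign of $\bar u^{ij,m_0}-U^{ij}[\vec{\bar u}^{m_0}]$ immediately shows the inner $\max$ in (\ref{SIPDE}) is non-negative, and the outer $\min$ follows. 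This avoids both the doubly reflected BSDE construction and the external reference \cite{hot}. The paper's detour, while longer, has the side benefit of exhibiting (\ref{2decreasing}) explicitly, which connects the penalized scheme to the limiting bilateral reflected system; your approach trades that structural information for brevity and self-containment.
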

\begin{proof}

Recall that
$(\bar{Y}^{ij,m_0},\bar{Z}^{ij,m_0},\bar{U}^{ij,m_0},\bar{K}^{ij,m_0})_{(i,j)\in
A^1\times A^2}$ solves the system of reflected BSDEs
(\ref{decreasing}). Therefore if we set $\bar K^{ij,m_0,-}_s:=
m_0\int_0^s(Y^{ij,m_0}_r-\min\limits_{l\in
A^2_j}(Y^{il,m_0}_r+\overline{g}_{jl}(r,X^{t,x}_r)))^+dr$, $s\leq T$,
then \\
$(\bar{Y}^{ij,m_0},\bar{Z}^{ij,m_0},\bar{U}^{ij,m_0},\bar{K}^{ij,m_0},\bar
K^{ij,m_0,-})_{(i,j)\in A^1\times A^2}$ is a solution of the
following system of reflected BSDEs with bilateral interconnected
obstacles: for any $\ijg$ and $s\leq T$, \be
\label{2decreasing}\left\{
    \begin{array}{ll}\bar{Y}^{ij,m_0}_s=h^{ij}(X^{t,x}_T)+
    \int^T_s{f}^{ij}(r,X^{t,x}_r,(\bar{Y}^{kl,m_0}_r)_{\klg},
    \bar{Z}^{ij,m_0}_r,\bar{U}^{ij,m_0}_r)dr-\int^T_s\bar{Z}^{ij,m_0}_rdB_r\\\qq\qq\qq\qq\qq\qq
      -\int^T_s\int_E \bar{U}^{ij,m_0}_r(e)\hat{N}(dr,de)+
      \bar{K}^{ij,m_0}_T-\bar{K}^{ij,m_0}_s-(\bar{K}^{ij,m_0,-}_T-\bar{K}^{ij,m_0,-}_s);\\
      \max\limits_{k\in
      A^1_i}\{\bar{Y}^{kj,m_0}_s-\underline{g}_{ik}(s,X^{t,x}_s)\}\leq \bar{Y}^{ij,m_0}_s\leq
      \bar{Y}^{ij,m_0}_s\vee \min\limits_{l\in
A^2_j}(\bar Y^{il,m_0}_s+\overline{g}_{jl}(s,X^{t,x}_s));\\
      \int^T_0(\bar{Y}^{ij,m_0}_s- \max\limits_{k\in
      A^1_i}\{\bar{Y}^{kj,m_0}_s-\underline{g}_{ik}(s,X^{t,x}_s)\})d\bar{K}^{ij,m_0}_s=
      \\\qq\qq\qq\qq
       \int_0^T(\bar{Y}^{ij,m_0}_s-\bar{Y}^{ij,m_0}_s\vee \min\limits_{l\in
A^2_j}(\bar Y^{il,m_0}_s+\overline{g}_{jl}(s,X^{t,x}_s)))dK^{ij,m_0,-}_r=0
    \end{array}
    \right.\ee On the other hand we know by (\ref{mFeyman-Kac}) that there exist deterministic continuous functions
$(\bar{u}^{ij,m_0})_{(i,j)\in A^1\times A^2}$ in $\Pi_g$ such that,
for every $t\leq T$, $$
\bar{Y}^{ij,m_0}_s=\bar{u}^{ij,m_0}(s,X^{t,x}_s),\,s\in [t,T].$$ Then using a result by Harraj et al. \cite{hot} we deduce that for any $\ijg$, $\bar{u}^{ij,m_0}$ is a viscosity solution of the following IPDE:                        $$\left\{
    \begin{array}{ll}
\min\{\bar u^{ij,m_0}(t,x)-\max_{k\in A^1_i}\{\bar{u}^{kj,m_0}(t,x)-\underline{g}_{ik}(t,x)\};\\\
\max\{\bar u^{ij,m_0}(t,x)-\bar{u}^{ij,m_0}(t,x)\vee \min_{l\in
A^2_j}(\bar{u}^{il,m_0}(t,x)+ \bar{g}_{jl}(t,x));-\partial_t \bar
u^{ij,m_0}(t,x)-\cL \bar
u^{ij,m_0}(t,x)\\
\qquad\qquad\qquad -g^{ij}(t,x,(\bar{u}^{kl,m_0}(t,x))_{(k,l)\in
A^1\times A^2},\sigma(t,x)^\top D_x \bar u^{ij,m_0}(t,x),I_{ij}(t,x,\bar u^{ij,m_0}))\}\}=0;\\\
    \bar u^{ij,m_0}(T,x)=h^{ij}(x).
\end{array}
    \right.$$
But
$$\bar u^{ij,m_0}(t,x)-\bar{u}^{ij,m_0}(t,x)\vee min_{l\in
A^2_j}(\bar{u}^{il,m_0}(t,x)+ \bar{g}_{jl}(t,x))\leq \bar
u^{ij,m_0}(t,x)-min_{l\in A^2_j}(\bar{u}^{il,m_0}(t,x)+
\bar{g}_{jl}(t,x))
$$
Therefore $\bar{u}^{ij,m_0}$ is a supersolution of 
$$\left\{
    \begin{array}{ll}
\min\{\bar u^{ij,m_0}(t,x)-\max_{k\in A^1_i}\{\bar{u}^{kj,m_0}(t,x)-\underline{g}_{ik}(t,x)\};\\\
\max\{\bar u^{ij,m_0}(t,x)-\min_{l\in
A^2_j}(\bar{u}^{il,m_0}(t,x)+ \bar{g}_{jl}(t,x));-\partial_t \bar
u^{ij,m_0}(t,x)-\cL \bar
u^{ij,m_0}(t,x)\\
\qquad\qquad\qquad -g^{ij}(t,x,(\bar{u}^{kl,m_0}(t,x))_{(k,l)\in
A^1\times A^2},\sigma(t,x)^\top D_x \bar u^{ij,m_0}(t,x),I_{ij}(t,x,\bar u^{ij,m_0}))\}\}=0;\\\
    \bar u^{ij,m_0}(T,x)=h^{ij}(x).
\end{array}
    \right.$$
As $(i,j)$ is arbiratry then $(\bar{u}^{ij,m_0})_{(i,j)\in A^1\times A^2}$ is a viscosity supersolution of
system (\ref{SIPDE}). 
\end{proof}

Consider now the set $\mathcal{U}_{m_0}$ defined as follows:
$$\mathcal{U}_{m_0}=\{\vec{u}:=(u^{ij})_{(i,j)\in
A^1\times A^2} \mbox{ s.t. } \vec{u} \mbox{ is a subsolution of
}(\ref{SIPDE})\mbox{ and }\forall (i,j)\in A^1\times A^2,
\bar{u}^{ij}\leq u^{ij}\leq \bar{u}^{ij,m_0}\}.$$
$\mathcal{U}_{m_0}$ is not empty since it contains
$(\bar{u}^{ij})_{(i,j)\in A^1\times A^2}$. Next for $(t,x)\in
[0,T]\times \R^k$ and $\ijg$, let us
set:$$^{m_0}u^{ij}(t,x)=\sup\{u^{ij}(t,x),~(u^{kl})_{(k,l)\in
A^1\times A^2}\in \mathcal{U}_{m_0}\}.$$

We are now ready to give the main result of this
paper:
\begin{thm}The family $(^{m_0}u^{ij})_{(i,j)\in A^1\times
A^2}$ does not depend on $m_0$ and is the unique continuous
viscosity solution in the class $\Pi_g$ of the system (\ref{SIPDE}).
Moreover $^{m_0}u^{ij}=\bar u^{ij}$ for any $(i,j)\in A^1\times
A^2$.
\end{thm}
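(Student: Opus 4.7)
The plan is to run Perron's method with $(\bar{u}^{ij})_{(i,j)\in A^1\times A^2}$ as the minimal subsolution and $(\bar{u}^{ij,m_0})_{(i,j)\in A^1\times A^2}$ as a bracketing supersolution, and then to invoke the comparison theorem (Theorem 3.1) both to upgrade the half-relaxed envelopes of ${}^{m_0}u^{ij}$ to an actual continuous solution and to identify the resulting solution with $\bar{u}^{ij}$, which will automatically be independent of $m_0$.

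First, I would verify that $(({}^{m_0}u^{ij})^*)_{(i,j)\in A^1\times A^2}$ is a viscosity subsolution of (\ref{SIPDE}), the standard stability fact that the usc envelope of a pointwise supremum of viscosity subsolutions remains a subsolution. The obstacle operators $L^{ij}[\cdot]$ and $U^{ij}[\cdot]$ pass through the supremum thanks to the continuity and non-negativity of the switching costs (assumption (A3)) and the monotonicity (A2) of $g^{ij}$ in the off-diagonal entries of $\vec{y}$; the non-local terms behave well under the supremum because of the sign condition $\gamma^{ij}\geq 0$ from (\ref{condgamma}) together with the monotonicity (A1)(iii) of $g^{ij}$ in the last variable.

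The crucial Perron step is to show that $(({}^{m_0}u^{ij})_*)_{(i,j)\in A^1\times A^2}$ is a viscosity supersolution of (\ref{SIPDE}). Supposing it fails at some $(t_0,x_0)$, I would apply Lemma 3.1 to $\vec{u}=({}^{m_0}u^{ij})^*$ and $\vec{w}=({}^{m_0}u^{ij})_*$ to select an index $(i_0,j_0)$ at which both obstacle inequalities are strictly inactive, so the failure reduces to the PDE inequality on a single coordinate. A classical Perron bump of ${}^{m_0}u^{i_0j_0}$, supported in a small neighborhood of $(t_0,x_0)$, strictly positive at $(t_0,x_0)$, and small enough to stay above $\bar{u}^{i_0j_0}$ and below $\bar{u}^{i_0j_0,m_0}$, would produce a modified family still lying in $\mathcal{U}_{m_0}$ but strictly exceeding ${}^{m_0}u^{i_0j_0}$ near $(t_0,x_0)$, contradicting maximality. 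Since only the $(i_0,j_0)$-component is perturbed, the subsolution inequalities on the other components are preserved, and locality of the bump, combined with (A2) and $\gamma^{ij}\geq 0$, ensures compatibility with the non-local integral operators $I_{ij}$ and with the obstacle couplings.

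Applying Theorem 3.1 to the usc subsolution $({}^{m_0}u^{ij})^*$ and the lsc supersolution $({}^{m_0}u^{ij})_*$, both in $\Pi_g$, yields $({}^{m_0}u^{ij})^*\leq ({}^{m_0}u^{ij})_*$; equality follows, so ${}^{m_0}u^{ij}$ is continuous and is a viscosity solution, the unique one in $\Pi_g$ by Corollary 3.1. For the final identification, $(\bar{u}^{ij})\in\mathcal{U}_{m_0}$ gives $\bar{u}^{ij}\leq {}^{m_0}u^{ij}$; conversely, Theorem 3.1 applied to the subsolution ${}^{m_0}u^{ij}$ and the supersolution $\bar{u}^{ij,m}$ for each $m\geq m_0$ yields ${}^{m_0}u^{ij}\leq \bar{u}^{ij,m}$, and letting $m\to\infty$ gives ${}^{m_0}u^{ij}\leq \bar{u}^{ij}$. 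Hence ${}^{m_0}u^{ij}=\bar{u}^{ij}$ for every $m_0$, which simultaneously identifies the solution and proves its independence from $m_0$. The main obstacle will be the bump construction in the Perron step: with coupled obstacle structures and non-local jump operators, one must verify that a local perturbation on a single coordinate preserves all the multi-component subsolution inequalities; the Lemma 3.1 reduction to a coordinate where both obstacles are strictly inactive is what localizes the problem to a single-coordinate PDE inequality, and (A2) ensures that perturbing only the $(i_0,j_0)$-entry does not spoil the subsolution property of the other components through the couplings $L^{kl}[\vec{u}]$ and $U^{kl}[\vec{u}]$.
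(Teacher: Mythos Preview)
Your overall Perron strategy matches the paper's, and both the subsolution step (sup of subsolutions via Barles--Imbert plus monotonicity (A2)) and the final identification are sound. Your way of concluding $^{m_0}u^{ij}=\bar u^{ij}$ by comparing $^{m_0}u^{ij}$ with each supersolution $\bar u^{ij,m}$ and letting $m\to\infty$ is a valid variant of the paper's argument, which instead sends $m_0\to\infty$ directly in the sandwich $\bar u^{ij}\le {}^{m_0}u^{ij}\le \bar u^{ij,m_0}$ after invoking uniqueness.

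There is, however, a genuine gap in your supersolution step. You propose to apply Lemma~3.1 to $\vec u=({}^{m_0}u^{ij})^*$ and $\vec w=({}^{m_0}u^{ij})_*$ in order to select a coordinate $(i_0,j_0)$ where both obstacles are strictly inactive. But Lemma~3.1 takes as hypothesis that $\vec w$ is a \emph{supersolution} of (\ref{SIPDE}); this is precisely the property you are trying to establish, so the invocation is circular. Even ignoring that, Lemma~3.1 returns the index maximizing $u^{ij}-w^{ij}$ over $\Gamma$, which need not be an index at which the supersolution inequality fails, so it would not localize the failure in the way you want.

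The paper avoids this detour entirely. If the supersolution inequality fails at some $(i,j)$, $(t,x)$ and test function $\phi$, one has
\[
\min\{(\phi-L^{ij}[\vec u_*])(t,x);\ \max\{(\phi-U^{ij}[\vec u_*])(t,x);\ \text{PDE part}\}\}<0,
\]
and by continuity of the data together with lower semicontinuity of $u^{kl}_*$ and monotonicity of $g^{ij}$, the \emph{full} min--max expression stays $\le 0$ for $\phi_\eps=\phi+\eps$ on a small ball, regardless of which clause was violated. The bump $w^{ij}=\max(\phi+\eps_3,u^{ij,*})$ on that ball (and $w^{kl}=u^{kl,*}$ otherwise) is then a subsolution in $\mathcal U_{m_0}$ strictly above $u^{ij}$ at some point, giving the contradiction. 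No reduction to a coordinate with inactive obstacles is needed; the min--max structure already carries through the bump. Your observation that bumping only the $(i,j)$-entry preserves the subsolution property of the other components via (A2) and the signs of $L^{kl},U^{kl}$ is correct and is exactly how the paper checks it.
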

\begin{proof} Firs note that w.l.o.g we assume that for any $\ij$, the function \\$y\in
\R \mapsto g_{ij}(t,x,[(y^{kl})_{(k,l)\in \G^{-(i,j)}},y],z,q)$ is also
non-decreasing when the other variables are fixed. \ms

To begin with, note that for any $(i,j)\in A^1\times A^2$,
$\bar{u}^{ij}\leq {}^{m_0}u^{ij} \leq \bar{u}^{ij,m_0}$. Since
$\bar{u}^{ij}$ and $\bar{u}^{ij,m_0}$ are of polynomial growth, then
$(^{m_0}u^{ij})_{(i,j)\in A^1\times A^2}$ belongs also to $\Pi_g$.
The remaining of the proof is divided into two steps and for ease of
notation, we denote $(^{m_0}u^{ij})_{\ijg}$ simply by
$(u^{ij})_{\ijg}$ as no confusion is possible. \ms

\no {\bf Step 1:} First we show that $(u^{ij})_{(i,j)\in A^1\times
A^2}$ is a subsolution of (\ref{SIPDE}). As $\bar{u}^{ij}\leq
u^{ij}\leq \bar{u}^{ij,m_0}$ then $\bar{u}^{ij}\leq u^{ij,*}\leq
\bar{u}^{ij,m_0}$ since $\bar{u}^{ij}$ is $usc$ and
$\bar{u}^{ij,m_0}$ is continuous. Therefore, for any $x\in \R^k$,
since $\bar{u}^{ij}(T,x)=\bar{u}^{ij,m_0}(T,x)=h^{ij}(x)$, we have
$u^{ij,*}(T,x)=h^{ij}(x)$.\\

Next let $(\tilde{u}^{ij})_{(i,j)\in A^1\times A^2}$ be an arbitrary
element of $\mathcal{U}_{m_0}$ and let $(i,j)$ be fixed. Let
$(t,x)\in (0,T)\times \R^k$ and $\phi\in \cC^{1,2}([0,T]\times \R^k)$ such
that $\tilde{u}^{ij,*}(t,x)=\phi(t,x)$ and $\tilde{u}^{ij,*}\leq
\phi$. Then
$$
    \begin{array}{ll}
\min\{(\tilde{u}^{ij,*}-L^{ij}[(\tilde{u}^{kl,*})_{(k,l)\in
A^1\times A^2}])(t,x) ;
 \max\{(\tilde{u}^{ij,*}-U^{ij}[(\tilde{u}^{kl,*})_{(k,l)\in A^1\times A^2}])(t,x);\qquad\qquad\qquad\\
\qquad-\partial_t\phi(t,x)-\cL\phi(t,x)-g^{ij}(t,x,(\tilde{u}^{kl,*}(t,x))_{(k,l)\in
A^1\times A^2},\sigma(t,x)^\top
D_x\phi(t,x),I_{ij}(t,x,\phi))\}\}\leq 0.
  \end{array}
   $$
By definition, for any $\klg$, $\tilde{u}^{kl}\leq u^{kl}$ and then
$\tilde{u}^{kl,*}\leq u^{kl,*}$. Using now the monotonicity property ({\bf
A2}), we obtain
$$
    \begin{array}{ll}
\min\{(\tilde{u}^{ij,*}-L^{ij}[({u}^{kl,*})_{(k,l)\in A^1\times
A^2}])(t,x);
 \max\{(\tilde{u}^{ij,*}-U^{ij}[({u}^{kl,*})_{(k,l)\in A^1\times A^2}])(t,x);\\ \qquad\qquad
-\partial_t\phi(t,x)-\cL\phi(t,x)-g^{ij}(t,x,[({u}^{kl,*})_{(k,l)\in \G^{-(i,j)}},\tilde{u}^{ij,*}](t,x)
,\sigma(t,x)^\top D_x\phi(t,x),I_{ij}(t,x,\phi))\}\}\leq 0.
  \end{array}
   $$
This means that $\tilde{u}^{ij}$ is a subsolution of the following
equation: $$ \left\{
    \begin{array}{ll}
\min\{(w-L^{ij}[({u}^{kl,*})_{(k,l)\in A^1\times A^2}])(t,x);
 \max\{(w-U^{ij}[({u}^{kl,*})_{(k,l)\in A^1\times A^2}])(t,x);\\
-\partial_tw(t,x)-\cL w(t,x)-g^{ij}(t,x,[({u}^{kl,*})_{(k,l)\in \G^{-(i,j)}},w](t,x) ,\sigma(t,x)^\top D_xw(t,x),I_{ij}(t,x,w))\}\}=0;\\
w(T,x)=h^{ij}(x).
  \end{array}
    \right.$$
Consequently, by a result by Barles-Imbert (\cite{barlesimbert},
Theorem 2, pp.577),  $u^{ij}$ is a subsolution of $$\left\{
    \begin{array}{ll}
\min\{(w-L^{ij}[({u}^{kl,*})_{(k,l)\in A^1\times A^2}])(t,x);
 \max\{(w-U^{ij}[({u}^{kl,*})_{(k,l)\in A^1\times A^2}])(t,x);\\
-\partial_tw(t,x)-\cL w(t,x)-g^{ij}(t,x,[({u}^{kl,*})_{(k,l)\in \G^{-(i,j)}},w](t,x) ,\sigma(t,x)^\top D_xw(t,x),I_{ij}(t,x,w))\}\}=0;\\
w(T,x)=h^{ij}(x).
  \end{array}
    \right.$$
As $(i,j)$ in $\G$ is arbitrary then $(u^{ij})_{\ij}$ is a subsolution of (\ref{SIPDE}).
 \ms

\no {\bf Step 2:} We will now show, by contradiction, that
$(u^{ij})_{\ij}$ is a supersolution of  (\ref{SIPDE}). First note that
for any $(i,j)\in A^1\times
    A^2$, $\underline{u}^{ij}=\underline{u}^{ij}_*\leq
    \overline{u}^{ij}_*\leq u^{ij}_*\leq
    \overline{u}^{ij,m_0}_*=\overline{u}^{ij,m_0},$ since
    $\overline{u}^{ij,m_0}$ is continuous and $\underline{u}^{ij}$ is
    $lsc$. Therefore, for any $x\in \R^k$, since
    $\underline{u}^{ij}(T,x)=h^{ij}(x)=\overline{u}^{ij,m_0}(T,x)$, it holds, $
    u^{ij}_*(T,x)=h^{ij}(x).$ 
    \ms
    
The rest of the proof is rather classical and can be read e.g. in \cite{Bis} up to some adaptations. However we give it for completeness. So suppose that there exist $\ij$, $(t,x)\in(0,T)\times \R^k$ and a $\cC^{1,2}$-function $\phi$ such that $u^{ij}_*(t,x)=\phi(t,x)$, $u^{ij}_*(s,y)>\phi(s,y)$ in $(0,T)\times \R^k-\{(t,x)\}$ and
 $$
    \begin{array}{ll}
\min\{(\phi-L^{ij}[({u}^{kl}_*)_{(k,l)\in A^1\times
A^2}])(t,x);
 \max\{(\phi-U^{ij}[({u}^{kl}_*)_{(k,l)\in A^1\times A^2}])(t,x);\qquad\qquad\qquad\\
\qquad
-\partial_t\phi(t,x)-\cL\phi(t,x)-g^{ij}(t,x,[({u}^{kl}_*)_{(k,l)\in
\G^{-(i,j)}},\phi](t,x) ,\sigma(t,x)^\top
D_x\phi(t,x),I_{ij}(t,x,\phi))\}\}<0.
  \end{array}
$$
Then by continuity of the equation, continuity and monotonicity of
$g^{ij}$ and lower semi-continuity of  $u^{kl}_*$, there exist two
constants $\eps_1>0$ and $\d_1>0$ such that: $\forall (s,y)\in
B((t,x),{\d_1})$ and $0\leq \eps \leq \eps_1$ we have: \be
\label{eqcontradiction}
    \begin{array}{ll}
\min\{(\phi_\eps-L^{ij}[({u}^{kl}_*)_{(k,l)\in A^1\times
A^2}])(s,y);
 \max\{(\phi_\eps -U^{ij}[({u}^{kl}_*)_{(k,l)\in A^1\times A^2}])(s,y);\qquad\qquad\qquad\\
\quad -\partial_t\phi_\eps (s,y)-\cL\phi_\eps
(s,y)-g^{ij}(s,y,[({u}^{kl}_*)_{(k,l)\in \G^{-(i,j)}},\phi_\eps
](s,y) ,\sigma(s,y)^\top
D_x\phi_\eps(s,y),I_{ij}(s,y,\phi_\eps))\}\}\leq 0
  \end{array}
\ee
where $\phi_\eps=\phi+\eps$. Next since $(t,x)$ is a strict minimum of $u^{ij}_*-\phi$, there are
constants $0<\eps_2$ and $0<\d_2\leq \d_1$ such that $u^{ij}_*-\phi
>\eps_2$ on $\partial B((t,x),{\d_2})$. Now let us set $\eps_3=\min
(\eps_1,\eps_2)$ and let us define $(w^{kl})_{\kl}$ as follows:
 $$
 w^{kl}=u^{kl,*} \mbox{ if } (k,l)\neq (i,j) \mbox{ and }w^{ij}=\left \{\begin{array}{l}
 \max (\phi +\eps_3, u^{ij,*}) \mbox{ on }B((t,x),{\d_2})\subset (0,T)\times \R^k \,\,;\\
 u^{ij,*} \mbox{ elsewhere }.
 \end{array}
 \right.
 $$
 Then $(w^{kl})_{\kl}$ belongs to $\Pi_g$ and is a viscosity subsolution of (\ref{SIPDE}). Indeed, first
 note that for any $\klg$, $w^{kl}$ is usc and $w^{kl}(T,x)=u^{kl,*}(T,x)=h^{kl}(x).$ Next let $(s,y)\in
 (0,T)\times \R^k$. If $(s,y)$ does not belong to $B((t,x),{\d_2})$ then the
 subsolution property stems from the one of $(u^{kl,*})_{\kl}$. Assume now
 that $(s,y)\in B((t,x),{\d_2})$. If $(k,l)\neq (i,j)$, then the
 subsolution property stems from the one of $u^{kl,*}$, in taking
 into account of $w^{ij}\geq u^{ij,*}$, $w^{k_1l_1}=u^{k_1l_1,*}$ if $({k_1,l_1})\neq (i,j)$
 and monotonicity of
 $g^{kl}$. Finally let us deal with the case when $(k,l)=(i,j)$. Let
 $\psi$ be a $\cC^{1,2}$-function such that $w^{ij}(s,y)=\psi(s,y)$
 and $\psi-w^{ij}$ has a strict global  minimum in $(s,y)\in (0,T)\times \R^k$. If
 $w^{ij}(s,y)=u^{ij,*}(s,y)$ then
                $$
    \begin{array}{ll}
\min\{(\psi-L^{ij}[({u}^{kl,*})_{(k,l)\in A^1\times A^2}])(s,y);
 \max\{(\psi-U^{ij}[({u}^{kl,*})_{(k,l)\in A^1\times A^2}])(s,y);\qquad\qquad\qquad\\
\qquad
-\partial_t\psi(t,x)-\cL\psi(s,y)-g^{ij}(s,y,[({u}^{kl,*})_{(k,l)\in
\G^{-(i,j)}},\psi](s,y) ,\sigma(s,y)^\top
D_x\psi(s,y),I_{ij}(s,y,\psi))\}\}\leq 0
  \end{array}
$$
since $w^{ij}\geq u^{ij,*}$ and then $\psi-u^{ij,*}$ has a strict
global  minimum in $(s,y)$, $(\psi-u^{ij,*})(s,y)=0$ and
$(u^{kl,*})_{\kl}$ is a subsolution of system (\ref{SIPDE}). Thus,
  \be \label{eqcontradiction2}
    \begin{array}{ll}
\min\{(\psi-L^{ij}[({w}^{kl,*})_{(k,l)\in A^1\times A^2}])(s,y);
 \max\{(\psi-U^{ij}[({w}^{kl,*})_{(k,l)\in A^1\times A^2}])(s,y);\qquad\qquad\qquad\\
\qquad
-\partial_t\psi(t,x)-\cL\psi(s,y)-g^{ij}(s,y,[({w}^{kl,*})_{(k,l)\in
\G^{-(i,j)}},\psi](s,y) ,\sigma(s,y)^\top
D_x\psi(s,y),I_{ij}(s,y,\psi))\}\}\leq 0.
  \end{array}
\ee
Finally if $w^{ij}(s,y)=\phi(s,y)+\eps_3$ then
$\phi(s,y)+\eps_3=\psi(s,y)$ and $\phi+\eps_3\leq \psi$ on $B((t,x),{\d_2})$. It implies
that
$$
\partial_t \phi(s,y)= \partial_t \psi(s,y),  D_x \phi(s,y)= D_x \psi(s,y)
\mbox{ and }D^2_{xx} \phi(s,y)\leq D^2_{xx}  \psi(s,y)
$$ and by (\ref{eqcontradiction}) we deduce that $w^{ij}$ verifies (\ref{eqcontradiction2}).
Therefore $w^{ij}$ satisfies the subsolution property and
$(w^{kl})_{\kl}$ is a viscosity subsolution of (\ref{SIPDE}). But we
have, $$ w^{ij}_*(t,x)\geq \max\{\phi(t,x)+\eps_3,
u^{ij}_*(t,x)\}=\phi(t,x)+\eps_3= u^{ij}_*(t,x)+\eps_3.$$Thus there
exists $(t_0,x_0)\in (0,T)\times \R^k$ such that
$w^{ij}(t_0,x_0)>u^{ij}(t_0,x_0)$. But this is contradictory to the
definition of $u^{ij}$. Therefore $(u^{kl})_{\kl}$ is a
supersolution of (\ref{SIPDE}) and the proof is complete.

Now, by Corollary (\ref{coruni}), $(^{m_0}u^{ij})_{(i,j)\in
A^1\times A^2}$ (i.e. $(u^{ij})_{(i,j)\in A^1\times A^2}$) does not
depend on $m_0$ since the solution of (\ref{SIPDE}) is the unique.
On the other hand for any $(i,j)$, we have
$$\bar u^{ij}\leq u^{ij}:={}^{m_0}u^{ij} \leq
u^{ij,m_0}$$ and in taking the limit wrt $m_0$ we obtain $\bar u^{ij}= u^{ij}$, for any $(i,j)\in A_1\times A_2$.
\end{proof}
As a by-product of the above construction we have the following
result related to the limit of the increasing scheme:
\begin{thm}The family $(\underbar u^{ij})_{(i,j)\in A^1\times A^2}$ is continuous and of polynomial growth and is the unique viscosity solution in $\Pi_g$ of the max-min problem, i.e.,
for any $\ijg$, \be \label{maxSIPDE}\left\{
    \begin{array}{ll}
\max\{(v^{ij}-U^{ij}[\vec{v}])(t,x);\min\{(v^{ij}-L^{ij}[\vec{v}])(t,x) ; \\
\qq-\partial_t v^{ij}(t,x)-\cL
v^{ij}(t,x)-g^{ij}(t,x,(v^{kl}(t,x))_{(k,l)\in
A^1\times A^2},\sigma(t,x)^\top D_x v^{ij}(t,x),I^{{ij}}(t,x,v^{ij}))\}\}=0\,;\\
     v^{ij}(T,x)=h^{ij}(x).
    \end{array}
    \right.\ee
\end{thm}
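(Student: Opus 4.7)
The plan is to mirror, with roles swapped, the entire construction carried out for the min-max system (\ref{SIPDE}). Specifically, just as $\bar{u}^{ij}=\lim_m\bar{u}^{ij,m}$ (decreasing limit) served as a viscosity subsolution of (\ref{SIPDE}) and $\bar{u}^{ij,m_0}$ as a supersolution, the candidate solution for (\ref{maxSIPDE}) will be framed between the lsc increasing limit $\underline{u}^{ij}=\lim_n\underline{u}^{ij,n}$ (playing the role of a supersolution) and $\underline{u}^{ij,n_0}$ for fixed $n_0$ (playing the role of a subsolution). The regularity and growth are already in hand: each $\underline{u}^{ij,n}$ is continuous and in $\Pi_g$ by Proposition \ref{lienedpbsde2}, and the preceding Corollary together with the bounds $\underline{u}^{ij,n_0}\leq \underline{u}^{ij}\leq \bar{u}^{ij}$ yield $\underline{u}^{ij}\in \Pi_g$.

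First I would establish a comparison principle for (\ref{maxSIPDE}) in $\Pi_g$, completely analogous to Theorem 3.1. The proof is symmetric: one again picks $(i_0,j_0)\in \Gamma(t^*,x^*)$ at the point where $\max_{ij}(u^{ij}-w^{ij})>0$, but this time one uses the analog of Lemma 3.1 for the max-min structure to guarantee that the inequalities $u^{i_0j_0}>L^{i_0j_0}[\vec u]$ and $w^{i_0j_0}<U^{i_0j_0}[\vec w]$ can be avoided in favor of the ``outer'' max and inner min being strict. One then applies Jensen--Ishii's lemma for non-local operators and the same linearization of $g^{ij}$, giving uniqueness and continuity of any $\Pi_g$-solution as in Corollary \ref{coruni}.

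Second, I would show by dualizing the arguments of the two propositions preceding the main theorem that $\underline{u}^{ij}$ is a viscosity supersolution and $\underline{u}^{ij,n_0}$ is a viscosity subsolution of (\ref{maxSIPDE}). For the supersolution property of $\underline{u}^{ij}$: if $\underline{u}^{ij}(t,x)<U^{ij}[\vec{\underline u}](t,x)-\varepsilon_0$, then by continuity and the monotone convergence $\underline{u}^{ij,n}\nearrow \underline{u}^{ij}$ one extracts (using the analog of the subsequence-extraction via Lemma 6.1 of the users' guide) a contact-point argument: at a global minimum of $\underline{u}^{ij}-\phi$, the penalization term $n(\underline{u}^{ij,n}-\max_k(\underline{u}^{kj,n}-\underline g_{ik}))^-$ must be bounded, forcing $\underline{u}^{ij}\geq L^{ij}[\vec{\underline u}]$ in the limit, and then the PDE-limit (using Fatou on $I^{2,\delta}_{ij}$, continuity of $I^1_\delta$ and Taylor expansion for $I^{1,\delta}_{ij}$) yields the supersolution inequality. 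For the subsolution property of $\underline{u}^{ij,n_0}$, one rewrites the BSDE system (\ref{increasing}) as a bilateral reflected system (by defining the non-decreasing process from the $n_0$-penalization of the lower obstacle) and applies the result of Harraj et al. in the same style as was done for $\bar{u}^{ij,m_0}$, then drops the $\vee$ to get a genuine subsolution.

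Third, with supersolution $\underline{u}^{ij}$ and subsolution $\underline{u}^{ij,n_0}$ in place, I would run Perron's method on the class
\[
\mathcal{V}_{n_0}=\{\vec v=(v^{ij})_{\ijg}\,:\, \vec v \text{ is a supersolution of (\ref{maxSIPDE}) and }\underline{u}^{ij,n_0}\leq v^{ij}\leq \underline{u}^{ij}\}
\]
and set ${}^{n_0}v^{ij}(t,x):=\inf\{v^{ij}(t,x)\,:\,\vec v\in \mathcal{V}_{n_0}\}$ (the infimum-Perron dual to the supremum construction used in the main theorem). Repeating Steps~1 and~2 of the main-theorem proof with sub/supersolutions interchanged, relying on the monotonicity assumption (A2) and the stability of lsc envelopes under infima, shows that $({}^{n_0}v^{ij})_{\ijg}$ is simultaneously a sub- and supersolution of (\ref{maxSIPDE}) in $\Pi_g$, and by the comparison principle of Step~1 it is the unique continuous viscosity solution. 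The main obstacle is the dual Perron step, as it requires a careful ``bump-down'' test-function argument (on $B((t,x),\delta_2)$, replace $v^{ij}$ by $\min(\phi-\varepsilon_3,v^{ij,*})$) to contradict the minimality of ${}^{n_0}v^{ij}$; one must check that this bump-down preserves the supersolution property using both monotonicity of $g^{ij}$ in $\vec y$ and the non-local test for $I_{ij}$, precisely as in Step~2 of the main theorem. Finally, letting $n_0\to\infty$, the sandwich $\underline{u}^{ij,n_0}\leq {}^{n_0}v^{ij}\leq \underline{u}^{ij}$ together with $n_0$-independence of ${}^{n_0}v^{ij}$ (by uniqueness) forces ${}^{n_0}v^{ij}=\underline{u}^{ij}$ for all $(i,j)$, giving continuity of $\underline{u}^{ij}$ and completing the proof.
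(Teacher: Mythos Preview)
Your proposal is correct in outline, but it takes a genuinely different and much longer route than the paper. You re-run the entire machinery---comparison principle, sub/supersolution construction via the penalized schemes, and a dual Perron argument---directly for the max-min system (\ref{maxSIPDE}). The paper, by contrast, disposes of the whole thing in two strokes: it observes that if one negates the increasing scheme (\ref{increasing}), i.e.\ passes to $(-\underline{Y}^{ij,n},-\underline{Z}^{ij,n},-\underline{U}^{ij,n},\underline{K}^{ij,n})$, one obtains a decreasing scheme of exactly the type already treated, so the main theorem just proved applies verbatim and yields that $(-\underline{u}^{ij})_{\ijg}$ is the unique continuous $\Pi_g$-viscosity solution of a min-max system with data $(-g^{ij}(t,x,-\vec y,-z,-q),(\bar g_{jl}),(\underline g_{ik}),-h^{ij})$. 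Then a standard sign-reversal result for viscosity solutions (Barles \cite{barles2}, p.~18: $u$ solves $F=0$ iff $-u$ solves $-F(t,x,-u,-Du,-D^2u)=0$, which here turns the outer $\min$ into an outer $\max$) converts this into the statement that $(\underline{u}^{ij})_{\ijg}$ solves (\ref{maxSIPDE}).

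What each approach buys: the paper's reduction is essentially free once the min-max theorem is in hand, and continuity and polynomial growth of $\underline{u}^{ij}$ come for nothing since they are inherited from $-\underline{u}^{ij}$. Your approach is self-contained and avoids invoking the Barles sign-change lemma, but it duplicates several pages of work (a second comparison theorem, a second pair of propositions analogous to Propositions~\ref{proprietedesoussolution} and the one following it, and a second Perron construction). If you keep your version, note that your Step~1 (comparison for (\ref{maxSIPDE})) does require re-proving the analogue of Lemma~3.1 with the roles of $L^{ij}$ and $U^{ij}$ swapped in (\ref{41}); this is straightforward under (A3) but should be stated.
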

 \begin{proof} Actually in considering the opposite of the
 increasing scheme defined in (\ref{increasing}), which becomes a decreasing one, we obtain that
 $(-\underline u^{ij})_{(i,j)\in A^1\times A^2}$ is
 continuous and of polynomial growth and is the unique viscosity
 solution in $\Pi_g$ of the following system: $\forall (i,j)\in A^1\times A^2$,$$\left\{
    \begin{array}{ll}
\min\{(\underbar v^{ij}-\max_{k\in A^2_j}\{\underbar v^{ik}-\bar
g_{kj}\})(t,x); \max\{ (\underbar
v^{ij}-\min_{l\in A^1_i}\{\underbar v^{lj}+\underline g_{il}\} )(t,x) ; \\
-\partial_t \underbar v^{ij}(t,x)-\cL \underbar
v^{ij}(t,x)+g^{ij}(t,x,(-\underbar v^{kl}(t,x))_{(k,l)\in
A^1\times A^2},\sigma(t,x)\tp D_x (-\underbar v^{ij})(t,x),-I_{{ij}}(t,x,\underbar v^{ij}))\}\}=0\,;\\
     \underbar v^{ij}(T,x)=-h^{ij}(x).
    \end{array}
    \right.$$
Using now a result by G.Barles (\cite{barles2}, pp.18) we obtain
that $(\underline u^{ij})_{(i,j)\in A^1\times A^2}$ is the unique
viscosity solution in $\Pi_g$ of (\ref{maxSIPDE}). \end{proof}

\section{Appendix : Alternative definition of the viscosity solution of system
(\ref{SIPDE})} \no The following result  inspired by the work by
Barles-Imbert \cite{barlesimbert} is another definition of the
viscosity solution of system (\ref{SIPDE}). We do not give its proof
since it is an adaptation of the one given in (\cite{zhao},
Proposition 5.2, pp.1656) as the function $q\longmapsto
g^{ij}(t,x,\vec{y},z,q)$ is non-decreasing, $\beta$ is a bounded
function, $\gamma^{ij}$ is non-negative which then imply
$I_{ij}(t,x,\phi)\leq I_{ij}(t,x,\psi)$, $
I_{ij}^{1,\d}(t,x,\phi)\le  I_{ij}^{1,\d}(t,x,\psi)$ and
$I_{ij}^{2,\d}(t,x,\phi)\le I_{ij}^{2,\d}(t,x,\psi)$ for any $\phi
\leq \psi$ such that  $\phi(t,x)=\psi(t,x)=u^{ij}(t,x)$  ($\delta
>0$ and $\ijg$ are fixed). \ms
\begin{propo} \label{defappendix}A function $\vec{u}=(u^{ij}(t,x))_{\ijg}\,\,:[0,T]\times \R^k \rightarrow \R^{m_1\times m_2}$ such that
for any $\ijg$, $u^{ij}\in\Pi_g$ is lsc (resp.
usc) is a viscosity supersolution  (resp. subsolution) of
(\ref{SIPDE}) if:
\ms

\no (i) $ v^{ij}(T,x_0)\geq\,\, (resp. \leq) \,\, h^{ij}(x_0)$, $\forall x_0\in \R^k$ ;
\ms

\no (ii) For any $(t_0,x_0)\in (0,T)\times \R^k$, $\delta \in (0,1)$
and a function $\phi\in \cC^{1,2}([0,T]\times \R^k)$ such that
$u^{ij}(t_0,x_0)=\phi(t_0,x_0)$ and $u^{ij}-\phi$ has a global
minimum (resp. maximum) at $(t_0,x_0)$ on $(0,T)\times B(x_0,\d
K_\b)$ where $K_\b$ is the bound of $\beta$ (see the first
inequality of (A0)-(ii)), we have:
$$ \left\{
    \begin{array}{ll}
\min\{(u^{ij}-L^{ij}[\vec{u}])(t_0,x_0); \max\{(u^{ij}-U^{ij}[\vec{u}])(t_0,x_0);\\
-\partial_t\phi(t_0,x_0)-b(t_0,x_0)^{\top} D_x\phi(t_0,x_0)-\frac{1}{2}\mbox{Tr}(\sigma\sigma^\top(t_0,x_0)D^2_{xx}\phi(t_0,x_0))-I^1_\delta(t_0,x_0,\phi)-I^2_\delta(t_0,x_0,D_x\phi,u^{ij})\\
-g^{ij}(t_0,x_0,(u^{kl}(t_0,x_0))_{(k,l)\in A^1\times
A^2},\sigma(t_0,x_0)^\top
D_x\phi(t_0,x_0),I^{1,\delta}_{ij}(t_0,x_0,\phi)+I^{2,\delta}_{ij}(t_0,x_0,u^{ij}))\}\}\geq
(resp. \leq)\,0.
  \end{array}
    \right.$$
 \end{propo}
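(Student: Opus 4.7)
The plan is to show that the alternative characterization (i)--(ii) of Proposition \ref{defappendix} is equivalent to the standard definition of viscosity sub/supersolution of (\ref{SIPDE}). I focus on the subsolution case; the supersolution case is symmetric. The crux is the monotonicity hint indicated by the authors: because $\gamma^{ij}\ge 0$, because $q\mapsto g^{ij}(t,x,\vec{y},z,q)$ is non-decreasing, and because of the pointwise definition of the $I$-operators, for any $\phi\le \psi$ in $\cC^{1,2}$ with $\phi(t,x)=\psi(t,x)=u^{ij}(t,x)$ one has
$$ I^2_\delta(t,x,D_x\phi,\phi)\le I^2_\delta(t,x,D_x\phi,\psi), \quad I^{2,\delta}_{ij}(t,x,\phi)\le I^{2,\delta}_{ij}(t,x,\psi), \quad I_{ij}(t,x,\phi)\le I_{ij}(t,x,\psi),$$
and hence the corresponding nonlinear term involving $g^{ij}$ satisfies an analogous monotonicity. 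Moreover, since $|\beta(x,e)|\le K_\beta(1\wedge|e|)$, the ``local'' integrals $I^1_\delta$ and $I^{1,\delta}_{ij}$ at $(t_0,x_0)$ depend only on the values of the test function on $B(x_0,\delta K_\beta)$.

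For the direction ``(ii) implies standard subsolution'', I take a $\cC^{1,2}$ test function $\phi$ such that $u^{ij}(t_0,x_0)=\phi(t_0,x_0)$ and $(t_0,x_0)$ is a global maximum of $u^{ij}-\phi$ on $(0,T)\times\R^k$. This is a fortiori a global maximum on $(0,T)\times B(x_0,\delta K_\beta)$, so condition (ii) applies. Since $u^{ij}\le \phi$ globally with equality at $(t_0,x_0)$, the monotonicity above (with $u^{ij}$ in the role of $\phi$ and $\phi$ in the role of $\psi$) yields $I^2_\delta(t_0,x_0,D_x\phi,u^{ij})\le I^2_\delta(t_0,x_0,D_x\phi,\phi)$ and the analogous inequality for $I^{2,\delta}_{ij}$. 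Using the identities $I=I^1_\delta+I^2_\delta$ and $I_{ij}=I^{1,\delta}_{ij}+I^{2,\delta}_{ij}$, together with the monotonicity of $g^{ij}$ in $q$, the inequality furnished by (ii) immediately implies the standard subsolution inequality at $(t_0,x_0)$.

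For the reverse direction ``standard subsolution implies (ii)'', given a test function $\phi$ with $u^{ij}(t_0,x_0)=\phi(t_0,x_0)$ and $(t_0,x_0)$ only a global maximum of $u^{ij}-\phi$ on $B(x_0,\delta K_\beta)$, I construct a sequence of $\cC^{1,2}$ functions $\psi_n$ such that $\psi_n\equiv\phi$ on $B(x_0,\delta K_\beta)$, $\psi_n\ge u^{ij}$ globally with equality at $(t_0,x_0)$, and $\psi_n\downarrow u^{ij}$ pointwise on $\R^k\setminus B(x_0,\delta K_\beta)$. Such $\psi_n$ exists because $u^{ij}$ is usc with polynomial growth: one sets $\psi_n=\phi+\chi_n$ where $\chi_n$ is smooth, vanishes on $B(x_0,\delta K_\beta)$, dominates $(u^{ij}-\phi)^+$ outside, and decreases to it pointwise. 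Each $\psi_n$ makes $u^{ij}-\psi_n$ attain a global maximum at $(t_0,x_0)$, so the standard subsolution inequality applies to $\psi_n$. At $(t_0,x_0)$, the derivative terms and the local integrals $I^1_\delta$, $I^{1,\delta}_{ij}$ coincide for $\psi_n$ and $\phi$, while dominated convergence --- using $\int_{|e|\ge\delta}(1\wedge|e|^2)\,n(de)<\infty$ and the polynomial bound on $u^{ij}$ as majorizing function --- gives $I^2_\delta(\cdot,D_x\phi,\psi_n)\to I^2_\delta(\cdot,D_x\phi,u^{ij})$ and $I^{2,\delta}_{ij}(\cdot,\psi_n)\to I^{2,\delta}_{ij}(\cdot,u^{ij})$ as $n\to\infty$. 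Continuity of $g^{ij}$ closes the argument.

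The main obstacle is this construction and limiting procedure in the reverse direction: producing a smooth approximation $\psi_n$ that simultaneously matches $\phi$ on the ball, dominates $u^{ij}$ globally so that the standard definition is applicable, and decreases to $u^{ij}$ on the complement so that the nonlocal integrals converge. The combination of the monotonicity hint (non-negativity of $\gamma^{ij}$, monotonicity of $g^{ij}$ in $q$, boundedness of $\beta$) with the integrability of the L\'evy measure and the polynomial growth of $u^{ij}$ is precisely what makes this limit valid, which is the content adapted from \cite[Proposition 5.2]{zhao}.
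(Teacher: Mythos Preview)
The paper does not provide its own proof; it only refers to \cite[Proposition~5.2]{zhao} and records the monotonicity observation (non-negativity of $\gamma^{ij}$, boundedness of $\beta$, and $q\mapsto g^{ij}$ non-decreasing imply $I_{ij}$, $I_{ij}^{1,\delta}$, $I_{ij}^{2,\delta}$ are monotone in the test function). Your ``Direction~1'' ((ii) $\Rightarrow$ standard) is exactly this monotonicity argument and is correct: for a global test function one has $u^{ij}\le\phi$, hence $I^2_\delta(D_x\phi,u^{ij})\le I^2_\delta(D_x\phi,\phi)$ and $I^{2,\delta}_{ij}(u^{ij})\le I^{2,\delta}_{ij}(\phi)$, and the min--max structure is monotone, so (ii) forces the standard inequality.

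Your ``Direction~2'' (standard $\Rightarrow$ (ii)), however, has a genuine gap. With your construction $\psi_n=\phi+\chi_n$, where $\chi_n$ vanishes on $B(x_0,\delta K_\beta)$ and decreases to $(u^{ij}-\phi)^+$ outside, one gets $\psi_n\downarrow \max(\phi,u^{ij})$, \emph{not} $u^{ij}$. Two things then fail. First, nothing prevents large jumps from landing back inside the ball: the hypothesis $|\beta(x,e)|\le K_\beta(1\wedge|e|)$ gives no lower bound, so for many $e$ with $|e|\ge\delta$ one has $x_0+\beta(x_0,e)\in B(x_0,\delta K_\beta)$, where $\psi_n\equiv\phi$ forever. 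Second, even outside the ball, wherever $\phi>u^{ij}$ your $\psi_n$ converges to $\phi$, not to $u^{ij}$. Hence the dominated-convergence claim $I^2_\delta(\cdot,D_x\phi,\psi_n)\to I^2_\delta(\cdot,D_x\phi,u^{ij})$ is false in general, and the inequality you do obtain (with $\max(\phi,u^{ij})$ in place of $u^{ij}$) goes the wrong way because $\psi_n\ge u^{ij}$ forces $-I^2_\delta(\psi_n)\le -I^2_\delta(u^{ij})$. The reverse implication therefore cannot be closed by your construction plus monotonicity alone; the argument in \cite{zhao} is more delicate (one must let the region on which $\psi_n$ coincides with $\phi$ shrink while simultaneously controlling the small-jump contribution, rather than freezing $\psi_n=\phi$ on the full ball $B(x_0,\delta K_\beta)$).
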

 \ms

 \begin{rem} \label{remappendix} In taking $\bar g_{jl}\equiv +\infty$ (resp.
 $\underline g_{ik}\equiv +\infty$) for any $j,l\in A_2$ (resp.
 $i,k\in A_1$) we obtain an alternative definition of the viscosity solution of the system of
 variational inequalities with interconnected lower (resp. upper)
 obstacles.\qed
                 \end{rem}
\ed